\tikzstyle{arrow}=[draw, -latex]
\newtheoremstyle{dotless}{}{}{\itshape}{}{\bfseries}{}{}{}
\theoremstyle{dotless}
\theoremstyle{plain}
\newtheorem{thm}{Theorem}[section]
\newtheorem{lem}[thm]{Lemma}
\newtheorem{prop}[thm]{Proposition}
\newtheorem{cor}[thm]{Corollary}
\theoremstyle{definition}
\newtheorem{defn}[thm]{Definition}
\newtheorem{rem}[thm]{Remark}
\newtheorem*{cond}{Condition}{\bf}{\rm}
\newtheorem*{condCT}{Condition (CT)}{\bf}{\rm}
\newtheorem*{condH3CT}{Condition (H3CT)}{\bf}{\rm}
\newcommand{\N} {\mathbb{N}}
\newcommand{\R} {\mathbb{R}}
\newcommand{\C} {\mathbb{C}}
\DeclareMathOperator{\id}{id}
\DeclareMathOperator{\re}{Re}
\DeclareMathOperator{\im}{Im}
\providecommand{\differential}{\mathrm{d}}
\renewcommand{\d}{\differential}
\newcommand{\vertiii}[1]{{\left\vert\kern-0.25ex\left\vert\kern-0.25ex\left\vert #1 
    \right\vert\kern-0.25ex\right\vert\kern-0.25ex\right\vert}}  
\begin{document}

\title[The inhomogeneous Cauchy-Riemann equation]{The inhomogeneous Cauchy-Riemann equation for weighted smooth 
vector-valued functions on strips with holes}
\author[K.~Kruse]{Karsten Kruse}
\address{Hamburg University of Technology \\ Institute of Mathematics \\
Am Schwarzenberg-Campus~3 \\
21073 Hamburg \\
Germany}
\email{karsten.kruse@tuhh.de}

\subjclass[2010]{Primary 35A01, 35B30, 32W05, 46A63, Secondary 46A32, 46E40}

\keywords{Cauchy-Riemann, parameter dependence, weight, smooth, solvability, vector-valued}

\date{\today}
\begin{abstract}
This paper is dedicated to the question of surjectivity of the Cauchy-Riemann operator on spaces 
$\mathcal{EV}(\Omega,E)$ of $\mathcal{C}^{\infty}$-smooth vector-valued functions whose 
growth on strips along the real axis with holes $K$ is induced
by a family of continuous weights $\mathcal{V}$. Vector-valued means that these functions have values in 
a locally convex Hausdorff space $E$ over $\C$.  
We characterise the weights $\mathcal{V}$ which give a counterpart of the Grothendieck-K\"othe-Silva duality 
$\mathcal{O}(\C\setminus K)/\mathcal{O}(\C)\cong\mathscr{A}(K)$ with non-empty compact $K\subset\R$ 
for weighted holomorphic functions. 
We use this duality and splitting theory to prove the surjectivity of the Cauchy-Riemann operator
$\overline{\partial}\colon\mathcal{EV}(\Omega,E)\to\mathcal{EV}(\Omega,E)$ 
for certain $E$. 
This solves the smooth (holomorphic, distributional) parameter dependence problem for the Cauchy-Riemann operator 
on $\mathcal{EV}(\Omega,\C)$.
\end{abstract}
\maketitle
\section{Introduction}
The smooth (holomorphic, distributional) parameter dependence problem for the Cauchy-Riemann operator 
$\overline{\partial}:=(1/2)(\partial_{1}+i\partial_{2})$ on the space $\mathcal{C}^{\infty}(\Omega)$ of 
smooth complex-valued functions on an open set $\Omega\subset\R^{2}$
is whether for every family $(f_{\lambda})_{\lambda\in U}$ in $\mathcal{C}^{\infty}(\Omega)$ 
depending smoothly (holomorphically, distributionally) on a parameter $\lambda$ in an open set $U\subset\R^{d}$
there is a family $(u_{\lambda})_{\lambda\in U}$ in $\mathcal{C}^{\infty}(\Omega)$ with the same kind of parameter dependence 
such that
\[
\overline{\partial}u_{\lambda}=f_{\lambda},\quad \lambda\in U.
\]
Here, smooth (holomorphic, distributional) parameter dependence of $(f_{\lambda})_{\lambda\in U}$ means that 
the map $\lambda\mapsto f_{\lambda}(x)$ is an element of $\mathcal{C}^{\infty}(U)$ (of the space of holomorphic functions 
$\mathcal{O}(U)$ on $U\subset\C$ open, the space of distributions $\mathcal{D}(V)'$ for open $V\subset\R^{d}$ where 
$U=\mathcal{D}(V)$) for each $x\in\Omega$.

The parameter dependence problem for a variety of partial differential operators on several spaces of 
(generalised) differentiable functions has been extensively studied, 
see e.g.\ \cite{Dom1, dierolf2014, dierolf_sieg2017, vogt1983, V1, kalmes2018} 
and the references and background in \cite{bonetdomanski2006, kruse2019_1}.
The answer to this problem for the Cauchy-Riemann operator is affirmative since the Cauchy-Riemann operator 
\begin{equation}\label{CR_classic}
 \overline{\partial}^{E}\colon \mathcal{C}^{\infty}(\Omega,E)\to\mathcal{C}^{\infty}(\Omega,E)
\end{equation}
on the space $\mathcal{C}^{\infty}(\Omega,E)$ of $E$-valued smooth functions is surjective if $E=\mathcal{C}^{\infty}(U)$ 
($\mathcal{O}(U)$, $\mathcal{D}(V)'$) by \cite[Corollary 3.9, p.\ 1112]{D/L} 
which is a consequence of the splitting theory of Bonet and Doma\'nski for PLS-spaces \cite{bonetdomanski2006, Dom1}, 
the topological isomorphy of $\mathcal{C}^{\infty}(\Omega,E)$ to Schwartz' $\varepsilon$-product 
$\mathcal{C}^{\infty}(\Omega)\varepsilon E$ and the fact that 
$\overline{\partial}\colon\mathcal{C}^{\infty}(\Omega)\to\mathcal{C}^{\infty}(\Omega)$ is surjective on the 
nuclear Fr\'echet space $\mathcal{C}^{\infty}(\Omega)$ (with its usual topology).
More generally, the Cauchy-Riemann operator \eqref{CR_classic} is surjective if $E$ is a Fr\'echet space 
by Grothendieck's classical theory of tensor products \cite{Gro} or if $E:=F_{b}'$ where $F$ is a Fr\'{e}chet space 
satisfying the condition $(DN)$ by \cite[Theorem 2.6, p.\ 174]{vogt1983} or if $E$ is an ultrabornological 
PLS-space having the property $(PA)$ by \cite[Corollary 3.9, p.\ 1112]{D/L} since $\operatorname{ker}\overline{\partial}$ 
has the property $(\Omega)$ by \cite[Proposition 2.5 (b), p.\ 173]{vogt1983}. 
The first and the last result cover the case that $E=\mathcal{C}^{\infty}(U)$ or $\mathcal{O}(U)$ whereas 
the last covers the case $E=\mathcal{D}(V)'$ as well. 
More examples of the second or third kind of such spaces $E$ are arbitrary Fr\'echet-Schwartz spaces, 
the space $\mathcal{S}(\R^{d})'$ of tempered distributions, 
the space $\mathcal{D}(V)'$ of distributions, the space $\mathcal{D}_{(w)}(V)'$ of ultradistributions of Beurling type 
and some more (see \cite{Dom1}, \cite[Corollary 4.8, p.\ 1116]{D/L} and \cite[Example 3, p.\ 7]{kruse2019_1}).
 
In this paper we consider the Cauchy-Riemann operator on spaces $\mathcal{EV}(\Omega,E)$ of weighted smooth $E$-valued functions where 
$E$ is a locally convex Hausdorff space over $\C$ with a system of seminorms $(p_{\alpha})_{\alpha\in\mathfrak{A}}$ 
generating its topology. These spaces consist of functions $f\in\mathcal{C}^{\infty}(\Omega,E)$ 
fulfilling additional growth conditions induced by a family $\mathcal{V}:=(\nu_{n})_{n\in\N}$ of continuous functions 
$\nu_{n}\colon \Omega\to (0,\infty)$ on a sequence of open sets $(\Omega_{n})_{n\in\N}$ with $\Omega=\bigcup_{n\in\N}\Omega_{n}$
given by the constraint 
\[
|f|_{n,m,\alpha}:=\sup_{\substack{x\in \Omega_{n}\\ \beta\in\N^{2}_{0},\,|\beta|\leq m}}
p_{\alpha}\bigl((\partial^{\beta})^{E}f(x)\bigr)\nu_{n}(x)<\infty
\]
for every $n\in\N$, $m\in\N_{0}$ and $\alpha\in\mathfrak{A}$. The aim is to derive sufficient conditions on 
$\mathcal{V}$ and $(\Omega_{n})_{n\in\N}$ such that 
\[
 \overline{\partial}^{E}\colon \mathcal{EV}(\Omega,E)\to\mathcal{EV}(\Omega,E)
\]
is surjective if $E:=F_{b}'$ where $F$ is a Fr\'{e}chet space satisfying the condition $(DN)$ or if $E$ is an ultrabornological 
PLS-space having the property $(PA)$.  

In \cite{kruse2018_5, kruse2019_5} this was done in the case that $E$ is a Fr\'echet space using conditions on 
$\mathcal{V}$ and $(\Omega_{n})_{n\in\N}$ which guarantee that $\mathcal{EV}(\Omega)$ is a nuclear Fr\'echet space, 
$\mathcal{EV}(\Omega,E)$ is topological isomorphic to $\mathcal{EV}(\Omega)\varepsilon E$ for complete $E$ 
and $\overline{\partial}\colon\mathcal{EV}(\Omega)\to\mathcal{EV}(\Omega)$ is surjective. 
By proving that $\operatorname{ker}\overline{\partial}$ has property $(\Omega)$ under some additional assumptions on $\mathcal{V}$ 
this was extended in \cite{kruse2019_1} to $E:=F_{b}'$ where $F$ is a Fr\'{e}chet space satisfying the condition $(DN)$ or 
ultrabornological PLS-spaces $E$ with $(PA)$ in the case that the $\Omega_{n}$ 
are strips along the real axis, i.e.\ $\Omega_{n}:=\{z\in\C\;|\;|\im(z)|<n\}$ for $n\in\N$ 
(see \cite[Corollary 17, p.\ 21]{kruse2019_1}). In particular, these conditions are satisfied 
if $\nu_{n}(z):=\exp(a_{n}|\re(z)|^{\gamma})$, $z\in\C$, for some $0<\gamma\leq 1$ and $a_{n}\nearrow 0$ 
by \cite[Corollary 18, p.\ 21]{kruse2019_1}.
In the present paper we consider the case that the $\Omega_{n}$ are strips along the real axis with holes around 
non-empty compact sets $K\subset[-\infty,\infty]$ and we are confronted with the task of 
deriving sufficient conditions on $\mathcal{V}$ such that $\operatorname{ker}\overline{\partial}$ 
has $(\Omega)$. The corresponding spaces $\mathcal{EV}(\Omega,E)$ and their subspaces of holomorphic functions 
are of interest because they are the basic spaces for the theory of vector-valued Fourier hyperfunctions, 
see e.g.\ \cite{Ion/Ka, Ito/Nag, J, Kawai, ich, L2, L3}.

Let us summarise the content of our paper. In Section 2 we recall necessary definitions 
and preliminaries which are needed in the subsequent sections. 
Section 3 is dedicated to a counterpart for weighted holomorphic functions 
of the Silva-K\"othe-Grothendieck duality
\[
\mathcal{O}(\C\setminus K)/\mathcal{O}(\C)\cong \mathscr{A}(K)_{b}'
\]
where $K\subset\R$ is a non-empty compact set 
and $\mathscr{A}(K)$ the space of germs of real analytic functions on $K$ 
(see \prettyref{thm:duality}, \prettyref{cor:duality}, \prettyref{cor:duality_example}). 
In Section 4 we use this duality to characterise the weights $\mathcal{V}$ such 
that the kernel $\operatorname{ker}\overline{\partial}$ satisfies property $(\Omega)$ 
in the case that $(\Omega_{n})_{n\in\N}$ is a sequence of strips along the real axis 
with holes around a non-empty compact set $K\subset[-\infty,\infty]$ 
(see \prettyref{thm:Omega_strips_with_holes}, \prettyref{cor:Omega_strips_with_holes}). 
The preceding conditions on $\mathcal{V}$ are used in Section 5 
to obtain the surjectivity of the Cauchy-Riemann operator on $\mathcal{EV}(\Omega,E)$ 
in the case that $(\Omega_{n})_{n\in\N}$ is a sequence of strips along the real axis 
with holes around $K$ for $E:=F_{b}'$ where $F$ is a Fr\'{e}chet space 
satisfying the condition $(DN)$ or an ultrabornological PLS-space $E$ having the property $(PA)$ 
(see \prettyref{thm:surj_CR_DN_PA}). Especially, these conditions hold 
if $\nu_{n}(z):=\exp(a_{n}|\re(z)|^{\gamma})$, $z\in\C$, for some $0<\gamma\leq 1$ and $a_{n}\nearrow 0$ 
(see \prettyref{cor:surj_CR_DN_PA_exa}).
\section{Notation and Preliminaries}
The notation and preliminaries are essentially the same as in \cite[Section 2]{kruse2017, kruse2018_5, kruse2019_1}.
We denote by $\overline{\R}:=\R\cup\{\pm\infty\}$ the two-point compactifaction of $\R$ and set $\overline{\C}:=\overline{\R}+i\R$.
We define the distance of two subsets $M_{0}, M_{1} \subset\R^{2}$ w.r.t.\ the Euclidean norm $|\cdot|$ on $\R^{2}$ via
\[
  \d(M_{0},M_{1}) 
:=\begin{cases}
   \inf_{x\in M_{0},\,y\in M_{1}}|x-y| &,\;  M_{0},\,M_{1} \neq \emptyset, \\
   \infty &,\;  M_{0}= \emptyset \;\text{or}\; M_{1}=\emptyset.
  \end{cases}
\]
Moreover, we denote by $\mathbb{B}_{r}(x):=\{w\in\R^{2}\;|\;|w-x|<r\}$ the Euclidean ball around $x\in\R^{2}$ 
with radius $r>0$ and identify $\R^{2}$ and $\C$ as (normed) vector spaces.
We denote the complement of a subset $M\subset \R^{2}$ by $M^{C}:= \R^{2}\setminus M$, 
the closure of $M$ by $\overline{M}$ and the boundary of $M$ by $\partial M$.
For a function $f\colon M\to\C$ and $K\subset M$ we denote by $f_{\mid K}$ the restriction of $f$ to $K$ and by 
\[
 \|f\|_{K}:=\sup_{x\in K}|f(x)|
\]
the sup-norm on $K$. By $\mathcal{C}(\Omega)$ we denote the space of continuous $\C$-valued functions on a set $\Omega\subset\R^{2}$ and 
by $L^{1}(\Omega)$ the space of (equivalence classes of) $\C$-valued Lebesgue integrable functions on 
a measurable set $\Omega\subset\R^{2}$. 

By $E$ we always denote a non-trivial locally convex Hausdorff space over the field 
$\C$ equipped with a directed fundamental system of seminorms $(p_{\alpha})_{\alpha\in \mathfrak{A}}$. 
If $E=\C$, then we set $(p_{\alpha})_{\alpha\in \mathfrak{A}}:=\{|\cdot|\}$.
We recall that for a disk $D\subset E$, i.e.\ a bounded, absolutely convex set, 
the vector space $E_{D}:=\bigcup_{n\in\N}nD$ becomes a normed space if it is equipped with 
gauge functional of $D$ as a norm (see \cite[p.\ 151]{Jarchow}). The space $E$ is called locally 
complete if $E_{D}$ is a Banach space for every closed disk $D\subset E$ (see \cite[10.2.1 Proposition, p.\ 197]{Jarchow}).
Further, we denote by $L(F,E)$ the space of continuous linear maps from 
a locally convex Hausdorff space $F$ to $E$ and sometimes use the notation $\langle T, f\rangle:=T(f)$, $f\in F$,  
for $T\in L(F,E)$. If $E=\C$, we write $F':=L(F,\C)$ for the dual space of $F$. 
If $F$ and $E$ are (linearly topologically) isomorphic, we write $F\cong E$.
We denote by $L_{b}(F,E)$ the space $L(F,E)$ equipped with the locally convex topology of uniform convergence 
on the bounded subsets of $F$.

We recall the following well-known definitions concerning continuous partial differentiability of 
vector-valued functions (c.f.\ \cite[p.\ 237]{kruse2018_2}). A function $f\colon\Omega\to E$ on an open set 
$\Omega\subset\R^{2}$ to $E$ is called continuously partially differentiable ($f$ is $\mathcal{C}^{1}$) 
if for the $n$th unit vector $e_{n}\in\R^{2}$ the limit
\[
(\partial^{e_{n}})^{E}f(x):=(\partial_{n})^{E}f(x)
:=\lim_{\substack{h\to 0\\ h\in\R, h\neq 0}}\frac{f(x+he_{n})-f(x)}{h}
\]
exists in $E$ for every $x\in\Omega$ and $(\partial^{e_{n}})^{E}f$ 
is continuous on $\Omega$ ($(\partial^{e_{n}})^{E}f$ is $\mathcal{C}^{0}$) for every $n\in\{1,2\}$. 
For $k\in\N$ a function $f$ is said to be $k$-times continuously partially differentiable 
($f$ is $\mathcal{C}^{k}$) if $f$ is $\mathcal{C}^{1}$ and all its first partial derivatives are $\mathcal{C}^{k-1}$.
A function $f$ is called infinitely continuously partially differentiable ($f$ is $\mathcal{C}^{\infty}$) 
if $f$ is $\mathcal{C}^{k}$ for every $k\in\N$.
The linear space of all functions $f\colon\Omega\to E$ which are $\mathcal{C}^{\infty}$ 
is denoted by $\mathcal{C}^{\infty}(\Omega,E)$. 
Let $f\in\mathcal{C}^{\infty}(\Omega,E)$. For $\beta=(\beta_{n})\in\N_{0}^{2}$ we set 
$(\partial^{\beta_{n}})^{E}f:=f$ if $\beta_{n}=0$, and
\[
(\partial^{\beta_{n}})^{E}f
:=\underbrace{(\partial^{e_{n}})^{E}\cdots(\partial^{e_{n}})^{E}}_{\beta_{n}\text{-times}}f
\]
if $\beta_{n}\neq 0$ as well as 
\[
(\partial^{\beta})^{E}f
:=(\partial^{\beta_{1}})^{E}(\partial^{\beta_{2}})^{E}f.
\]
Due to the vector-valued version of Schwarz' theorem $(\partial^{\beta})^{E}f$ is independent of the order of the partial 
derivatives on the right-hand side, we call $|\beta|:=\beta_{1}+\beta_{2}$ the order of differentiation 
and write $\partial^{\beta}f:=(\partial^{\beta})^{\C}f$. 

A function $f\colon\Omega\to E$ on an open set 
$\Omega\subset\C$ to $E$ is called holomorphic if the limit
\[
\bigl(\frac{\partial}{\partial z}\bigr)^{E}f(z_{0})
:=\lim_{\substack{h\to 0\\ h\in\C, h\neq 0}}\frac{f(z_{0}+h)-f(z_{0})}{h}
\]
exists in $E$ for every $z_{0}\in\Omega$. As before we define derivatives of higher order recursively, i.e.\ 
for $n\in\N_{0}$ we set $((\frac{\partial}{\partial z})^{0})^{E}f:=f$ and 
$((\frac{\partial}{\partial z})^{n})^{E}f:=(\frac{\partial}{\partial z})^{E}((\frac{\partial}{\partial z})^{n-1})^{E}f$, 
$n\geq 1$, if the corresponding limits exist. 
Further, we write $f^{(n)}:=((\frac{\partial}{\partial z})^{n})^{\C}f$. 
The linear space of all functions $f\colon\Omega\to E$ which are holomorphic 
is denoted by $\mathcal{O}(\Omega,E)$. 
If $E$ is locally complete and $f\in\mathcal{O}(\Omega,E)$, then $((\frac{\partial}{\partial z})^{n})^{E}f(z_{0})$ exists in $E$ 
for every $z_{0}\in\Omega$ and $n\in\N_{0}$ by \cite[2.2 Theorem and Definition, p.\ 18]{grosse-erdmann1992} 
and \cite[5.2 Theorem, p.\ 35]{grosse-erdmann1992}.
Now, the precise definition of the spaces of weighted smooth resp.\ holomorphic vector-valued functions 
from the introduction reads as follows. 

\begin{defn}[{\cite[3.2 Definition, p.\ 238]{kruse2018_2}}]\label{def:smooth_weighted_space}
Let $\Omega\subset\R^{2}$ be open and $(\Omega_{n})_{n\in\N}$ a family of non-empty
open sets such that $\Omega_{n}\subset\Omega_{n+1}$ and $\Omega=\bigcup_{n\in\N} \Omega_{n}$.
Let $\mathcal{V}:=(\nu_{n})_{n\in\N}$ be a countable family of positive continuous functions 
$\nu_{n}\colon \Omega \to (0,\infty)$ such that $\nu_{n}\leq\nu_{n+1}$ for all $n\in\N$.
We call $\mathcal{V}$ a directed family of continuous weights on $\Omega$ and set for $n\in\N$ 
\begin{enumerate}
 \item [a)]
 \[
\mathcal{E}\nu_{n}(\Omega_{n}, E):= \{ f \in \mathcal{C}^{\infty}(\Omega_{n}, E)\; | \;
\forall\;\alpha\in\mathfrak{A},\,m \in \N_{0}^{2}:\; |f|_{n,m,\alpha} < \infty \}
\]
and 
\[
\mathcal{EV}(\Omega, E):=\{ f\in \mathcal{C}^{\infty}(\Omega, E)\; | \;\forall\; n \in \N:
\; f_{\mid\Omega_{n}}\in \mathcal{E}\nu_{n}(\Omega_{n}, E)\}
\]
where
\[
|f|_{n,m,\alpha}:=\sup_{\substack{x \in \Omega_{n}\\ \beta \in \N_{0}^{2}, \, |\beta| \leq m}}
p_{\alpha}\bigl((\partial^{\beta})^{E}f(x)\bigr)\nu_{n}(x).
\]
\item [b)] 
\[
\mathcal{E}\nu_{n,\overline{\partial}}(\Omega_{n},E):= \{ f \in \mathcal{E}\nu_{n}(\Omega_{n}, E)\; | \;
f\in\operatorname{ker}\overline{\partial}^{E} \}
\]
and 
\[
\mathcal{EV}_{\overline{\partial}}(\Omega, E):=\{f\in\mathcal{EV}(\Omega, E)\;|\;
f\in\operatorname{ker}\overline{\partial}^{E}\}.
\]
\item [c)] 
\[
\mathcal{O}\nu_{n}(\Omega_{n},E):= \{ f \in \mathcal{O}(\Omega_{n}, E)\; | \;
\forall\;\alpha\in\mathfrak{A}:\;|f|_{n,\alpha} < \infty  \}
\]
and 
\[
\mathcal{OV}(\Omega, E):=\{ f\in \mathcal{O}(\Omega, E)\; | \;\forall\; n \in \N:
\; f_{\mid\Omega_{n}}\in \mathcal{O}\nu_{n}(\Omega_{n}, E)\}
\]
where 
\[
|f|_{n,\alpha}:=\sup_{\substack{x \in \Omega_{n}}}
p_{\alpha}(f(x))\nu_{n}(x).
\]
\end{enumerate}
The subscript $\alpha$ in the notation of the seminorms is omitted in the $\C$-valued case. 
The letter $E$ is omitted in the case $E=\C$ as well, e.g.\ we write
$\mathcal{E}\nu_{n}(\Omega_{n}):=\mathcal{E}\nu_{n}(\Omega_{n},\C)$ 
and $\mathcal{EV}(\Omega):=\mathcal{EV}(\Omega,\C)$ .
\end{defn}
\section{Duality}
We recall the well-known topological Silva-K\"othe-Grothendieck isomorphy 
\begin{equation}\label{eq:duality_non_weighted}
\mathcal{O}(\C\setminus K,E)/\mathcal{O}(\C,E)\cong L_{b}(\mathscr{A}(K),E)
\end{equation}
where $E$ is a quasi-complete locally convex Hausdorff space, $\varnothing\neq K\subset\R$ is compact, $\mathcal{O}(\C\setminus K,E)$ 
is equipped with the topology of uniform convergence on compact subsets of $\C\setminus K$, 
the quotient space with the induced quotient topology 
and $\mathscr{A}(K)$ is the space of germs of real analytic functions on $K$ with its inductive limit topology 
(see e.g.\ \cite[p.\ 6]{SebastiaoeSilva1950}, \cite[Proposition 1, p.\ 46]{Grothendieck1953}, 
\cite[Satz 9, p.\ 90]{Tillmann1955}, \cite[\S27.4, p.\ 375-378]{Koethe1969}, 
\cite[Theorem 2.1.3, p.\ 25]{Mori1}). 
The aim of this section is to prove a counterpart of this isomorphy 
for weighted vector-valued holomorphic functions and non-empty compact $K\subset\overline{\R}$. 

For a compact set $K\subset \overline{\R}$ and $t\in\R$, $t\geq 1$, we define the open sets
\begin{align*}
U_{t}(K)&:=\phantom{\cup} \{ z \in \C\;| \; \d(\{z\},K\cap\C)< 1/t\} \\
&\phantom{:=}\cup
\begin{cases}
\varnothing &, K \subset \R, \\
(t,\infty)+i (-1/t, 1/t) &, \infty \in K, \, -\infty \notin K, \\ 
(-\infty, -t)+i (-1/t, 1/t) &,\infty \notin K, \,  -\infty \in K,\\ 
\bigl((-\infty, -t)\cup (t,\infty)\bigr)+i (-1/t, 1/t) &, \pm\infty \in K,  
\end{cases}
\end{align*}
\begin{center}
 \includegraphics[scale=0.85]{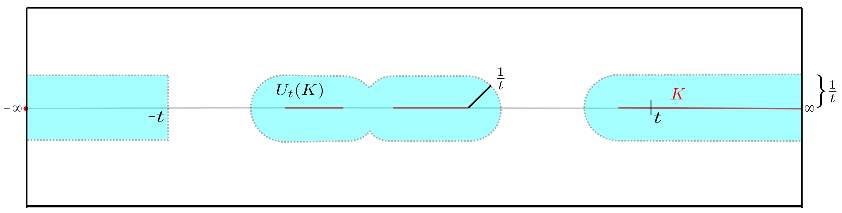}
 \captionsetup{type=figure}
 \caption{$U_{t}(K)$ for $\pm\infty\in K$ (c.f.\ \cite[Figure 3.1, p.\ 11]{ich})}
\end{center}
and
\[
S_{t}(K):= \left(\overline{U_{t}(K)}\right)^{C} \cap \{z\in \C \; | \; |\im(z)| < t \},\quad t>1, 
\quad\text{and}\quad S_{1}(K):=S_{3/2}(K)
\]
where the closure and the complement are taken in $\C$. The definition of $S_{1}(K)$ is motivated by 
\[
 \left(\overline{U_{1}(\overline{\R})}\right)^{C} \cap \{z\in \C \; | \; |\im(z)| < 1 \}=\varnothing. 
\]
\begin{center}
 \includegraphics[scale=0.85]{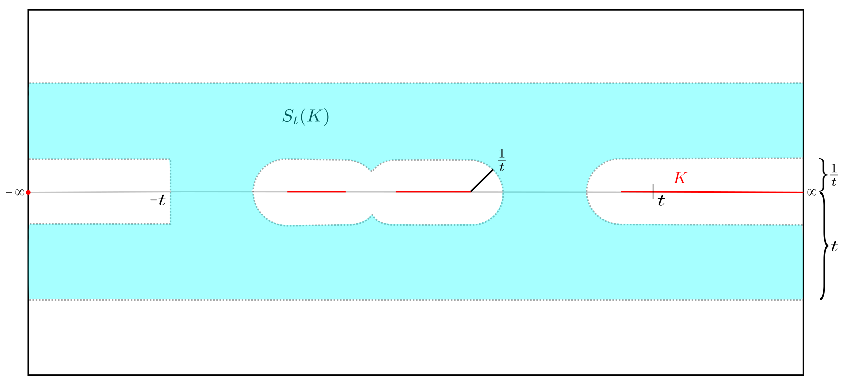}
 \captionsetup{type=figure}
 \caption{$S_{t}(K)$ for $\pm\infty\in K$ (c.f.\ \cite[Figure 3.2, p.\ 12]{ich})}
\end{center}

\begin{defn}
Let $K\subset \overline{\R}$ be a compact set, $\mathcal{V}:=(\nu_{n})_{n\in\N}$ 
a directed family of continuous weights on $\C$ and $E$ a locally convex Hausdorff space. 
Using \prettyref{def:smooth_weighted_space}, we set
\[
\mathcal{EV}(\overline{\C}\setminus K, E):=\mathcal{EV}(\C\setminus K, E)\quad\text{and}\quad 
\mathcal{OV}(\overline{\C}\setminus K, E):=\mathcal{OV}(\C\setminus K, E)
\]
with $\Omega_{n}:=S_{n}(K)$ for all $n\in\N$ and 
\[
|f|_{K,n,m,\alpha}:=|f|_{n,m,\alpha}, \;\; f\in\mathcal{EV}(\overline{\C}\setminus K, E),
\;\;\text{and}\;\;
|f|_{K,n,\alpha}:=|f|_{n,\alpha}, \;\; f\in\mathcal{OV}(\overline{\C}\setminus K, E),
\]
for $n\in\N$, $m\in\N_{0}$ and $\alpha\in\mathfrak{A}$. 
We omit the index $K$ in $|f|_{K,n,\alpha,m}$ and $|f|_{K,n,\alpha}$ if no confusion seems to be likely.
\end{defn}

The spaces $\mathcal{OV}(\overline{\C}\setminus K, E)$ play the counterpart of $\mathcal{O}(\C\setminus K,E)$ 
for our version of the isomorphy \eqref{eq:duality_non_weighted}.
Next, we introduce some conditions which guarantee the existence of a counterpart of $\mathscr{A}(K)$ 
for our purpose.

\begin{cond}\label{cond:DFS}
Let $K\subset \overline{\R}$ be a non-empty compact set and $\mathcal{V}:=(\nu_{n})_{n\in\N}$ 
a directed family of continuous weights on $\C$.
For every $n\in\N$ let 
\begin{enumerate}
 \item [$(qV_{\infty})$] there be $\mathcal{I}_{1}(n)>n$ such that for every $\varepsilon>0$ there is a compact set $Q\subset \overline{U_{\mathcal{I}_{1}(n)}(K)}$ with 
 $\nu_{n}(z)\leq\varepsilon\nu_{\mathcal{I}_{1}(n)}(z)$ for all $z\in\overline{U_{\mathcal{I}_{1}(n)}(K)}\setminus Q$.
 \item [$(qL^{1})$] $\nu_{n}(z)=\nu_{n}(|\re(z)|)$ for all $z\in\C$. In addition, if $K\cap\{\pm\infty\}\neq \varnothing$, let there be $\mathcal{I}_{2}(n)>n$ such that 
 $\frac{\nu_{n}}{\nu_{\mathcal{I}_{2}(n)}}\in L^{1}([0,\infty))$ .
\end{enumerate}
\end{cond}

Condition $(qV_{\infty})$ means that the \emph{quotient} $\tfrac{\nu_{n}}{\nu_{\mathcal{I}_{1}(n)}}$ 
\emph{vanishes at infinity} whereas $(qL^{1})$ means that the \emph{quotient} 
$\tfrac{\nu_{n}}{\nu_{\mathcal{I}_{2}(n)}}$ is an $L^{1}$\emph{-function} if $K\cap\{\pm\infty\}\neq \varnothing$.
Let $\Omega\subset\C$ be open and $f\in\mathcal{O}(\Omega)$. For $z\in\Omega$ and $n\in\N_{0}$ we denote the point evaluation 
of the $n$th complex derivative at $z$ by $\delta^{(n)}_{z}f:=f^{(n)}(z)$.

\begin{prop}\label{prop:DFS}
Let $K\subset \overline{\R}$ be a non-empty compact set and $\mathcal{V}:=(\nu_{n})_{n\in\N}$ a directed family 
of continuous weights on $\C$. 
For $n\in\N$ let
\[
\mathcal{O}\nu_{n}^{-1}(\overline{U_{n}(K)}):=\{f\in\mathcal{O}(U_{n}(K))\cap\mathcal{C}(\overline{U_{n}(K)})\;|\; 
\|f\|_{K,n}:=\|f\|_{n} < \infty \}
\]
where
\[
\|f\|_{K,n}:=\|f\|_{n}:=\sup_{z \in \overline{U_{n}(K)}}|f(z)|\nu_{n}(z)^{-1}
\]
and the spectral maps for $n,k\in\N$, $n\leq k$, be given by the restrictions
\[
\pi_{n,k}\colon \mathcal{O}\nu_{n}^{-1}(\overline{U_{n}(K)})\to\mathcal{O}\nu_{k}^{-1}(\overline{U_{k}(K)}), \;
\pi_{n,k}(f):=f_{\mid U_{k}(K)}.
\]
If $\mathcal{V}$ fulfils $(qV_{\infty})$, then 
\begin{enumerate}
\item [a)]
the inductive limit 
\[
 \mathcal{OV}^{-1}_{\operatorname{ind}}(K):=\lim_{\substack{\longrightarrow\\n\in \N}}\mathcal{O}\nu_{n}^{-1}(\overline{U_{n}(K)})
\]
exists and is a DFS-space.
\item [b)]
the span of the set of point evaluations of complex derivatives $\{\delta_{x_{0}}^{(n)} \; | \; x_{0} \in K\cap\R,\, n\in \N_{0}\}$
is dense in $\mathcal{OV}^{-1}_{\operatorname{ind}}(K)_{b}'$ if $K\subset\R$ or $K\cap\{\pm\infty\}$ contains 
no isolated points in $K$. 
\item [c)]
the span of the set of point evaluations $\{\delta_{x_{0}} \; | \; x_{0} \in K\cap\R\}$
is dense in $\mathcal{OV}^{-1}_{\operatorname{ind}}(K)_{b}'$ if $K$ has no isolated points.
\end{enumerate}
\end{prop}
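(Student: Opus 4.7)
The plan is to prove (a) by showing that a cofinal subsequence of the spectral maps is compact, so that the inductive limit is a (DFS)-space, and to deduce (b) and (c) from the reflexivity of this (DFS)-space combined with the identity theorem.

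For (a), each $\mathcal{O}\nu_n^{-1}(\overline{U_n(K)})$ is a Banach space, being a closed subspace (holomorphy and continuity both persist under uniform convergence on compact sets) of the weighted sup-normed Banach space on $\overline{U_n(K)}$. Since $\mathcal{I}_1(n)>n$ yields $\overline{U_{\mathcal{I}_1(n)}(K)}\subset U_n(K)$ and $\nu_n\leq\nu_{\mathcal{I}_1(n)}$, the restriction $\pi_{n,\mathcal{I}_1(n)}$ is a well-defined bounded linear map. To show it is compact, I would take a bounded sequence $(f_j)$ with $\|f_j\|_n\leq M$ and apply \prettyref{cond:DFS}~(i) with $\varepsilon_m=1/m$ to produce compact sets $Q_m\subset\overline{U_{\mathcal{I}_1(n)}(K)}$ on whose complements $\nu_n/\nu_{\mathcal{I}_1(n)}\leq 1/m$. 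Montel's theorem applied on each $Q_m\subset U_n(K)$ yields subsequences converging uniformly there, while on $\overline{U_{\mathcal{I}_1(n)}(K)}\setminus Q_m$ the $\nu_{\mathcal{I}_1(n)}$-normalised differences $|f_j-f_k|/\nu_{\mathcal{I}_1(n)}$ are bounded by $2M/m$. A standard diagonal subsequence is then Cauchy in $\|\cdot\|_{\mathcal{I}_1(n)}$, establishing compactness. Cofinality of these compact linking maps makes the inductive limit a (DFS)-space.

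For (b) and (c), set $X:=\mathcal{OV}^{-1}_{\operatorname{ind}}(K)$. Since $X$ is a (DFS)-space it is reflexive, so by Hahn-Banach a family in $X_b'$ spans a dense subspace if and only if every $f\in X$ on which all functionals in the family vanish is itself zero. I would therefore take such an $f$, represented by some $f\in\mathcal{O}\nu_n^{-1}(\overline{U_n(K)})$, and show $f=0$ on $U_n(K)$ component by component. On any connected component of $U_n(K)$ meeting $K\cap\R$ in some point $x_0$: in case (b) the vanishing of $f^{(j)}(x_0)$ for all $j\in\N_0$ forces the Taylor series of $f$ at $x_0$ to be identically zero, and analytic continuation propagates $f\equiv 0$ throughout the component; in case (c) the no-isolated-point hypothesis makes $x_0$ an accumulation point of zeros of $f$ in $K\cap\R$, and the identity theorem again yields $f\equiv 0$ on the component.

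The main obstacle is the possibility, when $\pm\infty\in K$, of a connected component of $U_n(K)$ consisting of a strip $(n,\infty)+i(-1/n,1/n)$ (or its symmetric counterpart) that a priori does not meet $K\cap\R$ at all. The supplementary hypotheses in (b) and (c) are designed precisely to exclude this: non-isolation of $+\infty$ in $K$ furnishes a sequence $(y_k)\subset K\cap\R$ with $y_k\to+\infty$, and for $y_k>n$ the open ball of radius $1/n$ about $y_k$ lies inside $U_n(K)$ and intersects this strip, placing $y_k$ into the same connected component as the strip. The argument on that component then reduces to the preceding paragraph; in (c) a further application of the no-isolated-point condition at each such $y_k$ provides the real accumulating zeros of $f$ needed to invoke the identity theorem.
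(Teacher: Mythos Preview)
Your proposal is correct and follows essentially the same approach as the paper: compactness of the linking maps $\pi_{n,\mathcal{I}_1(n)}$ via Montel plus the vanishing-at-infinity estimate from \prettyref{cond:DFS}~(i), and density in (b)/(c) via reflexivity of the (DFS)-space, the bipolar theorem, and the identity theorem on each component of $U_n(K)$. Your discussion of why the hypotheses in (b) and (c) guarantee that every component of $U_n(K)$ meets $K\cap\R$ is in fact more explicit than the paper's, which simply asserts this; the only routine points you leave implicit are the injectivity of the spectral maps (identity theorem) and the continuity of each $\delta_{x_0}^{(n)}$ on the steps (Cauchy's inequality).
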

\begin{proof}
$a)(i)$ First, we prove that the normed space $\mathcal{O}\nu_{n}^{-1}(\overline{U_{n}(K)})$ is a Banach space.
Let $(f_{k})_{k\in\N}$ be a Cauchy sequence in $\mathcal{O}\nu_{n}^{-1}(\overline{U_{n}(K)})$. 
Let $\varepsilon >0$ and $M\subset \overline{U_{n}(K)}$ be compact. 
Then there exists $N\in\N$ such that for all $k,m\geq N$
\begin{align*}
\varepsilon &> \|f_{k}-f_{m} \|_{n}=\sup_{z \in \overline{U_{n}(K)}}|f_{k}(z)-f_{m}(z)|\nu_{n}(z)^{-1}\\
& \geq (\|\nu_{n}\|_{M})^{-1} \|f_{k}-f_{m}\|_{M}.
\end{align*}
Thus $(f_{k})_{k\in\N}$ is also a Cauchy sequence in the Fr\'echet space 
$\mathcal{O}(U_{n}(K))\cap\mathcal{C}(\overline{U_{n}(K)})$ 
equipped with the topology induced by 
the system of seminorms $\|\cdot\|_{M}$ with compact $M\subset\overline{U_{n}(K)}$.
Therefore it converges to $f\in\mathcal{O}(U_{n}(K))\cap\mathcal{C}(\overline{U_{n}(K)})$.
Since every Cauchy sequence is bounded, there exists $C(n)\geq 0$ 
with $|f_{k}(z)|\nu_{n}(z)^{-1}\leq C(n)$ for all $z\in\overline{U_{n}(K)}$ 
and $k\in\N$, implying $f\in\mathcal{O}\nu_{n}^{-1}(\overline{U_{n}(K)})$ by pointwise convergence. 
Using the pointwise convergence again, we get for all $z\in\overline{U_{n}(K)}$ and $k\geq N$
\begin{align*}
 |f_{k}(z)-f(z)|\nu_{n}(z)^{-1} 
&=\lim_{m \to \infty}|f_{k}(z)-f_{m}(z)|\nu_{n}(z)^{-1}\\
&\leq \lim_{m \to \infty} \|f_{k}-f_{m}\|_{n}
 \leq\varepsilon
\end{align*}
and therefore $\|f_{k}-f\|_{n}\leq\varepsilon$, which proves that $\mathcal{O}\nu_{n}^{-1}(\overline{U_{n}(K)})$ is a Banach space.

$(ii)$ The maps $\pi_{n,m}\colon\mathcal{O}\nu_{n}^{-1}(\overline{U_{n}(K)})\rightarrow\mathcal{O}\nu_{m}^{-1}(\overline{U_{m}(K)})$, 
$n\leq m$, are injective by virtue of the identity theorem and the definition of sets $U_{n}(K)$. 
Thus the considered spectrum is an embedding spectrum.

%
$(iii)$ For all $M \subset U_{n}(K)$ compact and 
$f\in B_{n}:=\{g\in\mathcal{O}\nu_{n}^{-1}(\overline{U_{n}(K)})\;|\;\|g\|_{n}\leq 1\}$ we have
\[
\|f\|_{M}=\sup_{z\in M}|f(z)|\nu_{n}(z)^{-1}\nu_{n}(z) \leq \|\nu_{n}\|_{M}\|f\|_{n} \leq \|\nu_{n}\|_{M}.
\]
Thus $B_{n}$ is bounded in $\mathcal{O}(U_{n}(K))$ w.r.t.\ the system of seminorms generated by $\|\cdot\|_{M}$ for 
compact $M \subset U_{n}(K)$. 
As this space is a Fr\'echet-Montel space, 
$B_{n}$ is relatively compact and hence relatively sequentially compact in $\mathcal{O}(U_{n}(K))$.

$(iv)$ What remains to be shown is that for all $n\in\N$ there exists $m>n$ such that 
$\pi_{n,m}$ is a compact map. Because the considered spaces are Banach spaces, it suffices
to prove the existence of $m>n$ such that $(\pi_{n,m}(f_{k}))_{k\in\N}$ has a convergent 
subsequence in $\mathcal{O}\nu_{m}^{-1}(\overline{U_{m}(K)})$ 
for every sequence $(f_{k})_{k\in\N}$ in $B_{n}$. 
According to $(qV_{\infty})$, we choose $m:=\mathcal{I}_{1}(n)>n$. Let $\varepsilon>0$. Then there is a compact set 
$Q\subset \overline{U_{m}(K)}$ with
\begin{equation}\label{satz2.1.2}
 \sup_{z \in \overline{U_{m}(K)}\setminus Q}\frac{\nu_{m}(z)^{-1}}{\nu_{n}(z)^{-1}}
=\sup_{z \in \overline{U_{m}(K)}\setminus Q}\frac{\nu_{n}(z)}{\nu_{m}(z)}
\leq \varepsilon.
\end{equation}
In addition, we set $C(n,\varepsilon):=\sup_{z\in Q}\nu_{m}(z)^{-1}>0$. 
Now, let $(f_{k})_{k \in \N}$ be a sequence in $B_{n}$. 
By $(iii)$ it has a convergent subsequence $(f_{k_{l}})_{l\in\N}$ w.r.t.\ 
system of seminorms $\|\cdot\|_{M}$, $M\subset U_{n}(K)$ compact. 
Then there exists $N\in\N$ such that for $l,j\geq N$
\begin{equation}\label{satz2.1.3}
\|f_{k_{l}}-f_{k_{j}}\|_{Q}=\sup_{z \in Q}|f_{k_{l}}(z)-f_{k_{j}}(z)| < \frac{\varepsilon}{C(n,\varepsilon)}
\end{equation}
and therefore
\begin{flalign*}
&\hspace{0.37cm}\|\pi_{n,m}(f_{k_{l}})-\pi_{n,m}(f_{k_{j}})\|_{m}\\
&\leq \sup_{z \in Q}|f_{k_{l}}(z)-f_{k_{j}}(z)|\nu_{m}(z)^{-1}
     +\sup_{z \in \overline{U_{m}(K)}\setminus Q}|f_{k_{l}}(z)-f_{k_{j}}(z)|\nu_{m}(z)^{-1}\\ 
&\leq C(n,\varepsilon)\|f_{k_{l}}-f_{k_{j}}\|_{Q}+\sup_{z \in \overline{U_{m}(K)}\setminus Q}|f_{k_{l}}(z)-f_{k_{j}}(z)|\nu_{n}(z)^{-1}
      \frac{\nu_{m}(z)^{-1}}{\nu_{n}(z)^{-1}}\\ 
&\underset{\mathclap{\eqref{satz2.1.3},\eqref{satz2.1.2}}}{\leq} C(n,\varepsilon)\frac{\varepsilon}{C(n,\varepsilon)}
 +\varepsilon \sup_{z \in \overline{U_{m}(K)}\setminus Q}|f_{k_{l}}(z)-f_{k_{j}}(z)|\nu_{n}(z)^{-1}\\
&\leq\varepsilon +\varepsilon\sup_{z \in \overline{U_{n}(K)}}|f_{k_{l}}(z)-f_{k_{j}}(z)|\nu_{n}(z)^{-1}\\
&\leq\varepsilon +\varepsilon(\|f_{k_{l}}\|_{n}+\|f_{k_{j}}\|_{n})\\
&\underset{\mathclap{f_{k} \in  B_{n}}}{\leq}3\varepsilon.
\end{flalign*}
Hence the subsequence $(\pi_{n,m}(f_{k_{l}}))_{l\in\N}$ converges in $\mathcal{O}\nu_{m}^{-1}(\overline{U_{m}(K)})$,
proving the compactness of $\pi_{n,m}$. 

It follows from $(i)-(iv)$ and \cite[Proposition 25.20, p.\ 304]{meisevogt1997} 
that the inductive limit $\mathcal{OV}^{-1}_{\operatorname{ind}}(K)$ exists and is a DFS-space.

$b)$ We set $F:=\operatorname{span}\{\delta_{x_{0}}^{(n)}\;|\;x_{0}\in K\cap\R,\,n\in\N_{0}\}$. 
Let $x_{0}\in K\cap\R$ and $n\in\N_{0}$. Then $\delta_{x_{0}}^{(n)}$ is linear and for $k\in\N$ and 
$f\in\mathcal{O}\nu_{k}^{-1}(\overline{U_{k}(K)})$ we derive from Cauchy's inequality that
\[
  |\delta_{x_{0}}^{(n)}(f)| 
\leq n!(2k)^{n}\max_{|z-x_{0}|=1/(2k)}|f(z)|
\leq n!(2k)^{n}\max_{|z-x_{0}|=1/(2k)}\nu_{k}(z)\|f\|_{k}.
\]
Hence $\delta_{x_{0}}^{(n)}$ is continuous on $\mathcal{O}\nu_{k}^{-1}(\overline{U_{k}(K)})$ for any $k\in\N$, 
implying $F\subset\mathcal{OV}^{-1}_{\operatorname{ind}}(K)'$. 
As $\mathcal{OV}^{-1}_{\operatorname{ind}}(K)$ is a DFS-space by part $a)$, 
it is reflexive by \cite[Proposition 25.19, p.\ 303]{meisevogt1997}, i.e.\ the canonical embedding
$J\colon\mathcal{OV}^{-1}_{\operatorname{ind}}(K)\to(\mathcal{OV}^{-1}_{\operatorname{ind}}(K)_{b}')_{b}'$ 
is a topological isomorphism. We consider the polar set of $F$, i.e.\
\[
F^{\circ}:=\{y\in(\mathcal{OV}^{-1}_{\operatorname{ind}}(K)_{b}')_{b}' \;|\;\forall\;T\in F:\; y(T)=0\}.
\]
Let $y\in F^{\circ}$. Then there is $f\in\mathcal{OV}^{-1}_{\operatorname{ind}}(K)$ such that 
$y=J(f)$. For $T:=\delta_{x_{0}}^{(n)}\in F$
\[
0=y(T)=J(f)(T)=T(f)=f^{(n)}(x_{0})
\]
is valid for any $n\in\N_{0}$. Thus $f$ is identical to zero on a neighbourhood of $x_{0}$ (by Taylor series expansion) 
since $f$ is holomorphic near $x_{0}\in U_{n}(K)$. Due to the assumptions every component of $U_{n}(K)$ contains a point 
$x_{0}\in K\cap\R$ so $f$ is identical to zero on $\overline{U_{n}(K)}$ by the identity theorem and continuity,
yielding to $y=0$. Therefore $F^{\circ}=\{0\}$ and thus $F$ is dense in $\mathcal{OV}^{-1}_{\operatorname{ind}}(K)_{b}'$ 
by the bipolar theorem.

$c)$ The proof is similar to $b)$. We define $F:=\operatorname{span}\{\delta_{x_{0}}\;|\;x_{0}\in K\cap\R\}$. 
Then, like above, for $y\in F^{\circ}$ there is $f\in\mathcal{OV}^{-1}_{\operatorname{ind}}(K)$ with $y=J(f)$ 
such that for $T:=\delta_{x_{0}}\in F$, $x_{0}\in K\cap\R$,
\[
0=y(T)=J(f)(T)=T(f)=f(x_{0}).
\]
Due to the assumptions every component $Z$ of $U_{n}(K)$ contains a point $x_{0}\in K\cap\R$ and every point in $Z\cap K\cap\R$ 
is an accumulation point of $Z\cap K\cap\R$. So $f$ is identical to zero on $U_{n}(K)$ by the identity theorem.
\end{proof}

The parts $(iii)$-$(iv)$ of the proof of \prettyref{prop:DFS} a) are just slight modifications of 
\cite[Theorem (b), p.\ 67-68]{Bierstedt1975} 
which cannot be directly applied due to the closure $\overline{U_{n}(K)}$ being involved. 
In the case $\nu_{n}(z)^{-1}:=\exp((1/n)|\re(z)|)$, $z\in\C$, for all $n\in\N$ the spaces $\mathcal{OV}^{-1}_{\operatorname{ind}}(K)$ 
play an essential in the theory of Fourier hyperfunctions and it is already mentioned in \cite[p.\ 469]{Kawai} resp.\ 
proved in \cite[1.11 Satz, p.\ 11]{J} and \cite[3.5 Theorem, p.\ 17]{ich} that they are DFS-spaces. 

\begin{rem}
Let $K\subset \overline{\R}$ be a non-empty compact set and $\mathcal{V}:=(\nu_{n})_{n\in\N}$ a directed family 
of continuous weights on $\C$. 
\begin{enumerate}
\item [a)] If $K\subset\R$, then $(qV_{\infty})$ is fulfilled, which follows from the choices 
$\mathcal{I}_{1}(n):=2n$ and $Q:=\overline{U_{\mathcal{I}_{1}(n)}(K)}$ for $n\in\N$.
\item [b)] If $K\subset\R$, then $\mathcal{OV}^{-1}_{\operatorname{ind}}(K)\cong\mathscr{A}(K)$.
\end{enumerate}
\end{rem}

Now, we take a closer look at the sets $U_{t}(K)$ (c.f.\ \cite[3.3 Remark, p.\ 13]{ich}).

\begin{rem}\label{rem:fin_many_comp+path}
Let $K\subset\overline{\R}$ be compact and $t\in\R$, $t\geq 1$.
\begin{enumerate}
\item [a)]
The set $U_{t}(K)$ has finitely many components.
\item [b)]
Let $K\neq\varnothing$ and $Z$ be a component of $U_{t}(K)$. We define $a:=\min(Z\cap K)$ and $b:=\max(Z\cap K)$ if existing (in $\R$).
\begin{enumerate}
\item [(i)]
If $Z$ is bounded, there exists $0<R\leq 1/t$ such that for all $0<r\leq R$: $\{z\in\C\;|\;\d(\{z\},[a,b])< r\}\subset Z$
\item [(ii)]
If $Z\cap\R$ is bounded from below and unbounded from above and $a$ exists, there exists $0<R\leq 1/t$ such that for all $0<r\leq R$: 
$\{z\in\C\;|\;\d(\{z\},[a,\infty))< r\}\subset Z$
\item [(iii)]
If $Z\cap\R$ is bounded from above and unbounded from below and $b$ exists, there exists $0<R\leq 1/t$ such that for all $0<r\leq R$: 
$\{z\in\C\;|\;\d(\{z\},(-\infty,b])< r\}\subset Z$
\item [(iv)]
If $Z\cap\R$ is unbounded from below and above, there exists $0<R\leq 1/t$ such that for all $0<r\leq R$: 
$\{z\in\C\;|\;\d(\{z\},\R)< r\}\subset Z$
\item [(v)]
If $Z\cap\R$ is bounded from below and unbounded from above and $a$ does not exist, then $Z=(t,\infty)+i (-1/t,1/t)$. 
If $Z\cap\R$ is bounded from above and unbounded from below and $b$ does not exist, then $Z=(-\infty,-t)+i (-1/t,1/t)$.
\end{enumerate}
\end{enumerate}
\end{rem}
\begin{proof}
a) We only consider the case $\infty \in K$, $-\infty \notin K$. 
Let $(Z_{j})_{j\in J}$ denote the (pairwise disjoint) components of $U_{t}(K)$. Then $U_{t}(K) = \bigcup_{j\in J}Z_{j}$ 
and by definition of a component there is $k\in J$ such that $Z_{k}$ is the only component including $(t,\infty)+i(-1/t,1/t)$. 
Furthermore there exists $m\in\R$ with $\bigcup_{j\in J\setminus\{k\}}(Z_{j}\cap\R) \subset[m,t]$ by assumption. 
For $j\neq k$ the length $\lambda(Z_{j}\cap\R)$ of the interval $Z_{j}\cap \R$, where $\lambda$ denotes the Lebesgue measure, 
is estimated from below by $\lambda(Z_{j}\cap\R)\geq 2/t$ by definition of $U_{t}(K)$. 
Since all $Z_{j}$ are pairwise disjoint, this implies that $J$ has to be finite. The others cases follow analogously.

b)(i) Since $Z\cap K$ is closed in $\R$ and therefore compact, $a$ and $b$ exist. Hence $[a,b]\subset Z$ 
by the definition of $U_{t}(K)$ and as $Z$ is connected. $[a,b]$ being a compact subset of the open set $Z$ 
implies that there is $0<R< 1/t$ such that $([a,b]+i(-R,R))\subset Z$ by the tube lemma, 
which completes the proof. 

(ii) If $Z\cap K\cap(-\infty,t]\neq\varnothing$, then $a$ exists and analogously to (i) 
there exists $0<R<1/t$ such that for all $0<r\leq R$
\[
\{z\in\C\;|\;\d(\{z\},[a,t])<r\}\subset Z.
\]
By definition of $U_{t}(K)$ this brings forth $\{z\in\C\;|\;\d(\{z\},[a,\infty))<r\}\subset Z$.
If $Z\cap K\cap(-\infty,t]=\varnothing$ and $a$ exists, the desired $0<R<1/t$ exists by the definition 
of $U_{t}(K)$ since $t\not\in Z\cap K$ and $Z\cap K$ is closed in $\R$, which implies $\d(\{t\},Z\cap K)>0$.

(iii) Analogously to (ii).

(iv) By the assumptions $Z\cap K\cap[-t,t]\neq\varnothing$. Analogously to (i) there exists $0<R<1/t$ such that for all $0<r\leq R$
\[
\{z\in\C\;|\;\d(\{z\},[-t,t])< r\}\subset Z.
\]
Like in (ii) and (iii) this brings forth $\{z\in\C\;|\;\d(\{z\},\R)<r\}\subset Z$.

(v) This follows directly from the definition of $U_{t}(K)$ and as $Z$ is a component of $U_{t}(K)$.
\end{proof}

\begin{defn}\label{def:duality_path}
Let $n\in\N$, $K\subset\overline{\R}$ be a non-empty compact set and $(Z_{j})_{j\in J}$ denote the components of $U_{n}(K)$. 
A component $Z_{j}$ of $U_{n}(K)$ fulfils one of the cases of \prettyref{rem:fin_many_comp+path} b) 
and so for $a=a_{j}$, $b=b_{j}$ (in the cases (i)-(iii)), for $0<r_{j}<R_{j}=R$ (in the cases (i)-(iv)) 
resp.\ $0<r_{j}<1/n=:R_{j}$ (in the case (v)) we define
\[
V_{r_{j}}(Z_{j}):=
\begin{cases}
\{z\in\C\;|\;\d(\{z\},[a_{j},b_{j}])<r_{j}\} &, Z_{j}\;\text{fulfils (i)},\\
\{z\in\C\;|\;\d(\{z\},[a_{j},\infty))<r_{j}\} &, Z_{j}\;\text{fulfils (ii)},\\
\{z\in\C\;|\;\d(\{z\},(-\infty,b_{j}])<r_{j}\} &, Z_{j}\;\text{fulfils (iii)},\\
\{z\in\C\;|\;\d(\{z\},\R)<r_{j}\} &, Z_{j}\;\text{fulfils (iv)}, \\
(1/r_{j},\infty)+i(-r_{j}, r_{j}) &, Z_{j}=(n,\infty)+i(-1/n,1/n),\\
(-\infty,-1/r_{j})+i(-r_{j}, r_{j}) &, Z_{j}=(-\infty, -n)+i(-1/n,1/n),
\end{cases}
\]
where $Z_{j}$ fulfils (v) in the last two cases. By \prettyref{rem:fin_many_comp+path} a) 
there is w.l.o.g.\ $k\in\N$ with $U_{n}(K)=\bigcup_{j=1}^{k}Z_{j}$. 
We set $r:=(r_{j})_{1\leq j\leq k}$ and the path
\[
 \gamma_{K,n,r}:=\sum_{j=1}^{k}\gamma_{j}
\]
where $\gamma_{j}$ is the path along the boundary of $V_{r_{j}}(Z_{j})$ in $\C$ in the positive sense (counterclockwise). 
\end{defn}
\begin{center}
 \includegraphics[scale=0.85]{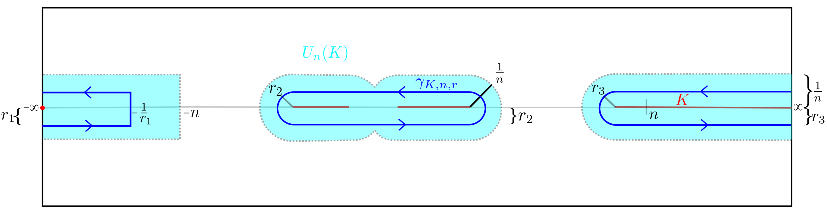}
 \captionsetup{type=figure}
 \caption{Path $\gamma_{K,n,r}$ for $\pm\infty\in K$ (c.f.\ \cite[Figure 4.1, p.\ 40]{ich})}
\end{center}
 
\begin{prop}\label{prop:pettis-integral}
Let $K\subset\overline{\R}$ be a non-empty compact set and $\mathcal{V}:=(\nu_{n})_{n\in\N}$ a directed family 
of continuous weights on $\C$ which fulfils $(qV_{\infty})$ and $(qL^{1})$. 
Let $n\in\N$, $\gamma_{K,n,r}$ be the path from \prettyref{def:duality_path} and $E$ a locally convex Hausdorff space.
If 
\begin{enumerate}
 \item [(i)] $K\subset\R$ and $E$ is locally complete, or
 \item [(ii)] $E$ is sequentially complete,
\end{enumerate}
then 
\begin{enumerate}
 \item [a)] $F\cdot\varphi$ is Pettis-integrable along $\gamma_{K,n,r}$ for all $F\in\mathcal{OV}(\overline{\C}\setminus K,E)$ 
 and $\varphi\in\mathcal{O}\nu_{n}^{-1}(\overline{U_{n}(K)})$. 
 \item [b)] there are $m\in\N$ and $C>0$ such that for all $\alpha\in\mathfrak{A}$, $F\in\mathcal{OV}(\overline{\C}\setminus K,E)$ 
 and $\varphi\in\mathcal{O}\nu_{n}^{-1}(\overline{U_{n}(K)})$
 \[
      p_{\alpha}\bigl(\int_{\gamma_{K,n,r}}F(\zeta)\varphi(\zeta)\d\zeta\bigr)
 \leq C |F|_{m,\alpha}\|\varphi\|_{n}.
 \]
 \item [c)] for all $F\in\mathcal{OV}(\overline{\C}\setminus K,E)$, $\varphi\in\mathcal{O}\nu_{n}^{-1}(\overline{U_{n}(K)})$ 
 and $\widetilde{r}:=(\widetilde{r}_{j})_{1\leq j\leq k}$ with $0<\widetilde{r}_{j}<R_{j}$ for all $1\leq j\leq k$
 \[
  \int_{\gamma_{K,n,r}}F(\zeta)\varphi(\zeta)\d\zeta=\int_{\gamma_{K,n,\widetilde{r}}}F(\zeta)\varphi(\zeta)\d\zeta .
 \]
 \item [d)] for all $F\in\mathcal{OV}(\overline{\C},E)$ and $\varphi\in\mathcal{O}\nu_{n}^{-1}(\overline{U_{n}(K)})$ 
 \[
  \int_{\gamma_{K,n,r}}F(\zeta)\varphi(\zeta)\d\zeta=0.
 \]
\end{enumerate}
\end{prop}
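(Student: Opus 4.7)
The plan is to reduce, via continuous linear functionals $e'\in E'$, to scalar contour integrals and then apply the classical theory on (possibly unbounded) paths in $\C$. The integrand $F\cdot\varphi$ is controlled pointwise by the ratio $\nu_n/\nu_m$, which becomes integrable on the unbounded branches of $\gamma_{K,n,r}$ thanks to \prettyref{cond:DFS} (ii).

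For a) and b) I would first choose $m\geq n$ large enough so that $\partial V_{r_j}(Z_j)\subset S_m(K)=\Omega_m$ for every component $Z_j$ of $U_n(K)$; this is possible because $r_j<R_j\leq 1/n$, so taking $m$ with $1/m<\min_j r_j$ forces the path to sit in the strip $\{|\im z|<m\}$ and avoid $\overline{U_m(K)}$. On the path,
\[
p_\alpha\bigl(F(\zeta)\varphi(\zeta)\bigr)\leq |F|_{m,\alpha}\|\varphi\|_n\,\frac{\nu_n(\zeta)}{\nu_m(\zeta)}.
\]
If $K\subset\R$, \prettyref{rem:fin_many_comp+path} b) shows every component of $U_n(K)$ is bounded, so $\gamma_{K,n,r}$ is a finite union of bounded smooth paths and the Pettis integral of the continuous $E$-valued integrand exists by local completeness of $E$. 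In case (ii), if there are unbounded components I further enlarge $m$ so that $m\geq\mathcal{I}_{2}(n)$; using $\nu_n(\zeta)=\nu_n(|\re\zeta|)$ and $\nu_n/\nu_{\mathcal{I}_{2}(n)}\in L^{1}([0,\infty))$ from \prettyref{cond:DFS} (ii), the integrand has an integrable scalar majorant along the horizontal branches, and sequential completeness of $E$ ensures that the truncated integrals form a Cauchy net with respect to every $p_\alpha$, converging to an element of $E$. Part b) follows by collecting the resulting constants together with the $L^{1}$-norm (or arclength in the bounded case).

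For c) I would test the identity against every $e'\in E'$; since the Pettis integral commutes with continuous linear functionals, it suffices to prove the identity for the scalar-valued holomorphic integrand $(e'\circ F)\varphi$ on $U_n(K)\setminus K$. The contours $\gamma_{K,n,r}$ and $\gamma_{K,n,\widetilde r}$ are homotopic component by component inside this region, and the classical Cauchy--Goursat theorem closes the bounded case. For unbounded components I truncate at $|\re\zeta|=N$, close up with vertical caps of height $2\max(r_j,\widetilde r_j)$, apply Cauchy, and let $N\to\infty$ along a suitable sequence; the $L^{1}$-integrability of $\nu_n/\nu_m$ forces the caps to contribute nothing in the limit, while the improper integrals along the long branches converge by dominated convergence. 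The separating property of $E'$ in a Hausdorff locally convex space then lifts the scalar identity to $E$. Part d) is the same argument applied to an entire $F\in\mathcal{OV}(\overline{\C},E)$, in which case $(e'\circ F)\varphi$ is holomorphic on all of $V_{r_j}(Z_j)$ and scalar Cauchy yields vanishing of the contour integral.

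The main obstacle is the treatment of the unbounded components in cases (ii)--(v) of \prettyref{rem:fin_many_comp+path} b): one must make sense of the Pettis integral over a non-compact path, which forces the invocation of \prettyref{cond:DFS} (ii), and one must justify contour deformation and Cauchy vanishing by a truncation-and-limit argument whose validity again hinges on the integrability of $\nu_n/\nu_m$. The bounded case ($K\subset\R$) is by contrast a clean application of local completeness and classical Cauchy theory.
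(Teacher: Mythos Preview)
Your proposal is correct and follows essentially the same architecture as the paper's proof: weak reduction via $e'\in E'$, the pointwise bound $p_\alpha(F(\zeta)\varphi(\zeta))\leq |F|_{m,\alpha}\|\varphi\|_n\,\nu_n(\zeta)/\nu_m(\zeta)$ with $m$ chosen so that the path lies in $S_m(K)$, existence of the Pettis integral on bounded arcs via local completeness (the paper invokes \cite{Bonet2002} and Mackey--Arens explicitly), and a truncation-plus-Cauchy-sequence argument on the unbounded branches using \prettyref{cond:DFS} (ii) together with sequential completeness.

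The only noticeable deviation is in parts c) and d): to kill the vertical caps at $|\re\zeta|=N$ you appeal to the $L^{1}$-integrability of $\nu_n/\nu_m$ and pass to a subsequence along which the ratio tends to zero, whereas the paper uses \prettyref{cond:DFS} (i) directly to get $\nu_n(q)\leq\varepsilon\,\nu_{\mathcal{I}_1(n)}(q)$ for $q$ outside a compact set, which yields the cap estimate for \emph{every} large $q$ rather than along a subsequence. Both arguments are valid; the paper's is slightly cleaner since it avoids the subsequence manoeuvre, but yours is a legitimate alternative and the overall strategy is the same.
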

\begin{proof}
$a)+b)$ We have to show that there is $e_{K,n,r}\in E$ such that 
 \[
  \langle e',e_{K,n,r}\rangle=\int_{\gamma_{K,n,r}}\langle e',F(\zeta)\varphi(\zeta)\rangle\d\zeta,\quad e'\in E',
 \]
which gives $\int_{\gamma_{K,n,r}}F(\zeta)\varphi(\zeta)\d\zeta=e_{K,n,r}$.
 
First, let $V_{r_{j}}(Z_{j})$ be bounded for some $1\leq j\leq k$. Then
there is a parametrisation $\gamma_{j}\colon [0,1]\to\C$ which has a continuously differentiable extension 
$\widetilde{\gamma}_{j}$ on $(-1,2)$.  
As the map $(e'\circ (F\cdot\varphi)\circ\gamma_{j})\cdot\gamma_{j}'$ is continuous on $[0,1]$ for every $e'\in E'$, 
it is an element of $L^{1}([0,1])$ for every $e'\in E'$. Thus the map 
 \[
  \mathfrak{I}_{j}\colon E'\to \C,\;\mathfrak{I}_{j}(e'):=\int_{\gamma_{j}}\langle e',F(\zeta)\varphi(\zeta)\rangle\d\zeta
  =\int_{0}^{1}\langle e',(F\cdot\varphi)(\gamma_{j}(t))\rangle\gamma_{j}'(t)\d t,
 \]
is well-defined and linear. We estimate
 \[
  |\mathfrak{I}_{j}(e')|\leq \underbrace{\int_{0}^{1}|\gamma_{j}'(t)|\d t}_{=:\ell(\gamma_{j})} 
  \sup_{z\in (F\cdot\varphi)(\gamma_{j}([0,1]))}|e'(z)|,\quad e'\in E'.
 \]
Let us denote by $\overline{\operatorname{acx}}((F\cdot\varphi)(\gamma_{j}([0,1])))$ the closure of the 
absolutely convex hull of the set $(F\cdot\varphi)(\gamma_{j}([0,1]))$.
Since $e'\circ (F\cdot\varphi\circ\widetilde{\gamma}_{j})\in \mathcal{C}^{1}((-1,2))$ for every $e'\in E'$, 
the absolutely convex set $\overline{\operatorname{acx}}((F\cdot\varphi)(\gamma_{j}([0,1])))$ is compact in 
the locally complete space $E$ by \cite[Proposition 2, p.\ 354]{Bonet2002}, 
yielding $\mathfrak{I}_{j}\in(E_{\kappa}')'\cong E$ by the theorem of Mackey-Arens, i.e.\  
there is $e_{j}\in E$ such that
 \[
  \langle e',e_{j}\rangle=\mathfrak{I}_{j}(e')=\int_{\gamma_{j}}\langle e',F(\zeta)\varphi(\zeta)\rangle\d\zeta,\quad e'\in E'.
 \]
Therefore $F\cdot\varphi$ is Pettis-integrable along $\gamma_{j}$. Furthermore, 
we choose $m_{j}\in\N$ such that $(1/m_{j})<r_{j}$ and
for $\alpha\in\mathfrak{A}$ we set $B_{\alpha}:=\{x\in E\;|\;p_{\alpha}(x)<1\}$. We note that
\begin{align*}
   p_{\alpha}\bigl(\int_{\gamma_{j}}F(\zeta)\varphi(\zeta)\d\zeta\bigr)
 &=\sup_{e'\in B_{\alpha}^{\circ}}\bigl|\langle e',\int_{\gamma_{j}}F(\zeta)\varphi(\zeta)\d\zeta\rangle\bigr|\\
 &\leq\ell(\gamma_{j})\sup_{e'\in B_{\alpha}^{\circ}}\sup_{z\in\gamma_{j}([0,1])}|e'(F(z))\varphi(z)|\\
 &\leq \ell(\gamma_{j})\sup_{w\in\gamma_{j}([0,1])}\frac{\nu_{n}(w)}{\nu_{m_{j}}(w)}
 \sup_{e'\in B_{\alpha}^{\circ}}\sup_{z\in S_{m_{j}}(K)}|e'(F(z)\nu_{m_{j}}(z))|\|\varphi\|_{n}\\
 &=\ell(\gamma_{j})\bigl\|\frac{\nu_{n}}{\nu_{m_{j}}}\bigr\|_{\gamma_{j}([0,1])}\sup_{z\in S_{m_{j}}(K)}p_{\alpha}(F(z)\nu_{m_{j}}(z))
 \|\varphi\|_{n}\\
 &=\ell(\gamma_{j})\bigl\|\frac{\nu_{n}}{\nu_{m_{j}}}\bigr\|_{\gamma_{j}([0,1])} |F|_{m_{j},\alpha}\|\varphi\|_{n}
\end{align*}
where we used \cite[Proposition 22.14, p.\ 256]{meisevogt1997} in the first and second to last equation 
to get from $p_{\alpha}$ to $\sup_{e'\in B_{\alpha}^{\circ}}$ and back. If $K\subset\R$, then all 
$V_{r_{j}}(Z_{j})$, $1\leq j\leq k$, are bounded and we deduce
our statement with $e_{K,n,r}:=\sum_{j=1}^{k}e_{j}$, $m:=\max_{1\leq j\leq k} m_{j}$ and 
$C:=k\max_{1\leq j\leq k}\ell(\gamma_{j})\|\nu_{n}/\nu_{m_{j}}\|_{\gamma_{j}([0,1])}$. 

Second, let us consider the case $\infty\in K$, $-\infty\not\in K$. Let $Z_{k}$ be the unique unbounded component of $U_{n}(K)$. 
For $q\in\N$, $q>1/r_{k}>n$, we denote by $\gamma_{k,q}$ the part of $\gamma_{k}$ in $\{z\in\C\;|\;\re(z)\leq q\}$. 
Like in the first part the Pettis-integral 
\[
 e_{k,q}:=\int_{\gamma_{k,q}}F(\zeta)\varphi(\zeta)\d\zeta
\]
exists (in $E$) and for $\alpha\in\mathfrak{A}$ and $m_{k}\in\N$, $(1/m_{k})<r_{k}$, we have
\[
  p_{\alpha}\bigl(\int_{\gamma_{k,q}}F(\zeta)\varphi(\zeta)\d\zeta\bigr)
 \leq\ell(\gamma_{k,q})\bigl\|\frac{\nu_{n}}{\nu_{m_{k}}}\bigr\|_{\gamma_{k,q}([0,1])} |F|_{m_{k},\alpha}\|\varphi\|_{n}.
\]
Next, we prove that $(e_{k,q})_{q>1/r_{k}}$ is a Cauchy sequence in $E$. 
We choose $M:=\max(m_{k},\mathcal{I}_{2}(n))$ with $\mathcal{I}_{2}(n)$ 
from condition $(qL^{1})$. For $q,p\in\N$, $q>p>1/r_{k}>n$, we obtain
\begin{flalign*}
 &\hspace{0.37cm} p_{\alpha}(e_{k,q}-e_{k,p})\\
 &=\sup_{e'\in B_{\alpha}^{\circ}}\bigl|\int_{\gamma_{k,q}-\gamma_{k,p}}e'(F(\zeta))\varphi(\zeta)\d\zeta\bigr|\\ 
 &\leq\sup_{e'\in B_{\alpha}^{\circ}}\bigl(\int_{p}^{q}|e'(F(t-ir_{k}))\varphi(t-ir_{k})|\d t
  +\int_{p}^{q}|e'(F(t+ir_{k}))\varphi(t+ir_{k})|\d t\bigr)\\
 &\leq\sup_{e'\in B_{\alpha}^{\circ}}
  \bigl(\int_{p}^{q}\frac{\nu_{n}(t-ir_{k})}{\nu_{M}(t-ir_{k})}\d t+\int_{p}^{q}\frac{\nu_{n}(t+ir_{k})}{\nu_{M}(t+ir_{k})}\d t\bigr)
  |e'\circ F|_{M}\|\varphi\|_{n}\\
 &=2\int_{p}^{q}\frac{\nu_{n}(t)}{\nu_{M}(t)}\d t |F|_{M,\alpha}\|\varphi\|_{n}
 \leq 2\int_{p}^{q}\frac{\nu_{n}(t)}{\nu_{\mathcal{I}_{2}(n)}(t)}\d t |F|_{M,\alpha}\|\varphi\|_{n}
\end{flalign*}
and observe that $(\int_{0}^{q}\tfrac{\nu_{n}(t)}{\nu_{\mathcal{I}_{2}(n)}(t)}\d t)_{q}$ 
is a Cauchy sequence in $\C$ by condition $(qL^{1})$. 
Therefore $(e_{k,q})_{q>1/r_{k}}$ is a Cauchy sequence in $E$, has a limit $e_{k}$ in the sequentially complete space $E$ and
\[
 e_{k}=\int_{\gamma_{k}}F(\zeta)\varphi(\zeta)\d\zeta.
\]
We fix $p\in\N$, $p>1/r_{k}>n$, and conclude that
\begin{align*}
 p_{\alpha}\bigl(\int_{\gamma_{k}}F(\zeta)\varphi(\zeta)\d\zeta\bigr)
&\leq p_{\alpha}(e_{k}-e_{k,p})+p_{\alpha}(e_{k,p})\\
&\leq \bigl(2\int_{0}^{\infty}\frac{\nu_{n}(t)}{\nu_{\mathcal{I}_{2}(n)}(t)}\d t
 +\ell(\gamma_{k,p})\bigl\|\frac{\nu_{n}}{\nu_{m_{k}}}\bigr\|_{\gamma_{k,p}([0,1])}\bigr)
  |F|_{M,\alpha}\|\varphi\|_{n}
\end{align*}
Consequently, our statement holds also in the case $\infty\in K$, $-\infty\not\in K$ and in the remaining cases it follows analogously.

$c)$ We note that
\[
\langle e',\int_{\gamma_{K,n,r}}F(\zeta)\varphi(\zeta)\d\zeta
 -\int_{\gamma_{K,n,\widetilde{r}}}F(\zeta)\varphi(\zeta)\d\zeta\rangle\\
=\int_{\gamma_{K,n,r}-\gamma_{K,n,\widetilde{r}}}\langle e',F(\zeta)\varphi(\zeta)\rangle\d\zeta
\]
for all $e'\in E'$. Thus statement $c)$ follows from Cauchy's integral theorem and the Hahn-Banach theorem if $K\subset\R$. 
Now, let us consider the case $\infty\in K$, $-\infty\not\in K$. We denote by $\gamma_{k}$ resp.\ $\widetilde{\gamma}_{k}$ the part of 
$\gamma_{K,n,r}$ resp.\ $\gamma_{K,n,\widetilde{r}}$ in the unbounded component of $U_{n}(K)$. It suffices to show that 
\begin{equation}\label{eq:path_independent}
 \int_{\gamma_{k}-\widetilde{\gamma}_{k}}\langle e',F(\zeta)\varphi(\zeta)\rangle\d\zeta=0,\quad e'\in E'.
\end{equation}
We choose $\mathcal{I}_{1}(n)$ from condition $(qV_{\infty})$. 
Let $\varepsilon>0$ and w.l.o.g.\ $r_{k}<\widetilde{r}_{k}$. 
Then there is a compact set $Q\subset\overline{U_{\mathcal{I}_{1}(n)}(K)}$ with 
$\nu_{n}(z)\leq\varepsilon\nu_{\mathcal{I}_{1}(n)}(z)$ for all 
$z\in\overline{U_{\mathcal{I}_{1}(n)}(K)}\setminus Q$. We choose $q\in\R$ such that $q>1/r_{k}$ 
and $q\in\overline{U_{\mathcal{I}_{1}(n)}(K)}\setminus Q$ 
and define the path $\gamma_{0,q}^{+}\colon [r_{k},\widetilde{r}_{k}]\to\C$, $\gamma_{0,q}^{+}(t):=q+it$. 
We deduce that for $m_{k}\in\N$, $(1/m_{k})<\min(r_{k},1/\mathcal{I}_{1}(n))$, and every $e'\in E'$
\begin{align*}
      \bigl|\int_{\gamma_{0,q}^{+}}\langle e',F(\zeta)\varphi(\zeta)\rangle\d\zeta\bigr|
 &\leq \int_{r_{k}}^{\widetilde{r}_{k}}\frac{\nu_{n}(q+it)}{\nu_{m_{k}}(q+it)}\d t\|\varphi\|_{n} |e'\circ F|_{m_{k}}\\
 &=(\widetilde{r}_{k}-r_{k})\frac{\nu_{n}(q)}{\nu_{m_{k}}(q)}\|\varphi\|_{n} |e'\circ F|_{m_{k}}\\
 &\leq (\widetilde{r}_{k}-r_{k})\|\varphi\|_{n}|e'\circ F|_{m_{k}}\varepsilon 
\end{align*}
where we used condition $(qL^{1})$ for the second equality.
In the same way we obtain with $\gamma_{0,q}^{-}\colon [-\widetilde{r}_{k},-r_{k}]\to\C$, $\gamma_{0,q}^{-}(t):=q+it$, that 
\[
  \bigl|\int_{\gamma_{0,q}^{-}}\langle e',F(\zeta)\varphi(\zeta)\rangle\d\zeta\bigr|
  \leq (\widetilde{r}_{k}-r_{k})\|\varphi\|_{n}|e'\circ F|_{m_{k}}\varepsilon .
\]
Hence we get \eqref{eq:path_independent} by Cauchy's integral theorem and the Hahn-Banach theorem as well. 
The remaining cases follow similarly.

$d)$ The proof is similar to $c)$. Let $F\in\mathcal{OV}(\overline{\C},E)$. Again, it suffices to prove that 
 \[
  \int_{\gamma_{K,n,r}}\langle e',F(\zeta)\varphi(\zeta)\rangle\d\zeta=0,\quad e'\in E'.
 \]
This follows from Cauchy's integral theorem and the Hahn-Banach theorem if $K\subset\R$. 
Again, we only consider the case $\infty\in K$, $-\infty\not\in K$ and only need to show that 
\[
 \int_{\gamma_{k}}\langle e',F(\zeta)\varphi(\zeta)\rangle\d\zeta=0,\quad e'\in E',
\]
where $\gamma_{k}$ is the part of $\gamma_{K,n,r}$ in the unbounded component of $U_{n}(K)$. 
Let $\varepsilon>0$ and choose $q$ as in $c)$. 
Then we have with $\gamma_{0,q}\colon [-r_{k},r_{k}]\to\C$, $\gamma_{0,q}(t):=q+it$, that 
\[
  \bigl|\int_{\gamma_{0,q}}\langle e',F(\zeta)\varphi(\zeta)\rangle\d\zeta\bigr|
  \leq 2r_{k}\|\varphi\|_{n}|e'\circ F|_{\mathcal{I}_{1}(n)}\varepsilon 
\]
for every $e'\in E'$ by $(qV_{\infty})$ and $(qL^{1})$. 
Cauchy's integral theorem and the Hahn-Banach theorem imply our statement.
\end{proof}

An essential role in the proof of $\mathcal{O}(\C\setminus K,E)/\mathcal{O}(\C,E)\cong L_{b}(\mathscr{A}(K),E)$ for 
non-empty compact $K\subset\R$ and quasi-complete $E$ (see \eqref{eq:duality_non_weighted}) 
plays the fundamental solution $z\mapsto 1/(\pi z)$ of the Cauchy-Riemann operator. 
By the identity theorem we can consider $\mathcal{OV}(\overline{\C},E)$ as a subspace of $\mathcal{OV}(\overline{\C}\setminus K,E)$ and 
we equip the quotient space $\mathcal{OV}(\overline{\C}\setminus K,E)/\mathcal{OV}(\overline{\C},E)$ with the induced locally convex 
quotient topology (which may not be Hausdorff, see \prettyref{rem:Hausdorff_quotient}). 
We want to prove the isomorphy
\[
 \mathcal{OV}(\overline{\C}\setminus K,E)/\mathcal{OV}(\overline{\C},E) \cong L_{b}(\mathcal{OV}^{-1}_{\operatorname{ind}}(K),E)
\]
for non-empty compact $K\subset\overline{\R}$ under some assumptions on $K$, the weights $\mathcal{V}$ and the space $E$. 
Since we have to deal with functions having some growth given by the weights $\mathcal{V}$, we have to use a fundamental 
solution $z\mapsto g(z)/(\pi z)$, where $g$ is an entire function with $g(0)=1$, of the Cauchy-Riemann operator which is suitable 
for our growth conditions. 

\begin{condCT}\label{cond:duality}
Let $K\subset \overline{\R}$ be a non-empty compact set and $\mathcal{V}:=(\nu_{n})_{n\in\N}$ a directed family 
of continuous weights on $\C$.
Let there be $g_{K}\in\mathcal{O}(\C)$, $g_{K}(0)=1$, such that with $G_{K}(z):=g_{K}(z-\cdot)$, $z\in\C\setminus K$, it holds that
\begin{enumerate}
 \item [1)] for all $z_{0}\in\C\setminus K$ there is $n\in\N$ such that 
 $G_{K}(\mathbb{B}_{1/n}(z_{0}))\subset\mathcal{O}\nu_{n}^{-1}(\overline{U_{n}(K)})$ and 
 \[
 G_{K}'(z_{0}):=\bigl(\frac{\partial}{\partial z}\bigr)^{\mathcal{O}\nu_{n}^{-1}(\overline{U_{n}(K)})}G_{K}(z_{0})
 =\lim_{\substack{h\to 0\\h\in\C,\,h\neq 0}}\frac{G_{K}(z_{0}+h)-G_{K}(z_{0})}{h}
 \]
 exists in $\mathcal{O}\nu_{n}^{-1}(\overline{U_{n}(K)})$. 
 \item [2)] for all $n\in\N$ there is $\mathcal{J}_{2}(n)>n$ such that 
 \[
 |G_{K}|_{K,n,\mathcal{J}_{2}(n)}:=\sup_{z\in S_{n}(K)}\|G_{K}(z)\|_{K,\mathcal{J}_{2}(n)}\nu_{n}(z)<\infty .
 \]
 \item [3)] for all $n\in\N$ there is $\mathcal{J}_{3}(n)>n$ such that 
 \[
 |G_{K}|_{\{x_{0}\},n,\mathcal{J}_{3}(n)}:=\sup_{z\in S_{n}(\{x_{0}\})}\|G_{K}(z)\|_{\{x_{0}\},\mathcal{J}_{3}(n)}\nu_{n}(z)<\infty 
 \]
 for all $x_{0}\in K\cap\R$. 
 \item [4)] for all $n\in\N$ there are $p:=\mathcal{J}_{4}(n)$, $m:=\widetilde{\mathcal{J}}_{4}(p)$ with $2\leq p<m$ and $C>0$ such that 
 for all $z\in \overline{S_{1/n}}:=\{w\in\C\;|\; |\im(w)|\leq (1/n)\}$
 \[
  \int_{-\infty}^{\infty}\frac{|g_{K}(z-(t\pm ip))|}{\nu_{m}(t\pm ip)}\nu_{n}(z)\d t\leq C.
 \]
 \item [5)] for all $n\in\N$ and $z\in\C\setminus K$ 
 \[
 \sup_{\zeta\in S_{n}(K)}|G_{K}(z)(\zeta)|\nu_{n}^{-1}(\zeta)<\infty. 
 \]
\end{enumerate}
\end{condCT}

$(CT)$ stands for \emph{Cauchy transformation} which is the name of the inverse of the isomorphism we are searching for.

\begin{rem}
\begin{enumerate}
 \item [a)] Since $g_{K}$ is an entire function, the estimates in the conditions $(CT.2)$ and $(CT.3)$ imply that 
 $G_{K}(S_{n}(K))\subset \mathcal{O}\nu_{\mathcal{J}_{2}(n)}^{-1}(\overline{U_{\mathcal{J}_{2}(n)}(K)})$ 
 and $G_{K}(S_{n}(\{x_{0}\}))\subset \mathcal{O}\nu_{\mathcal{J}_{3}(n)}^{-1}(\overline{U_{\mathcal{J}_{3}(n)}(\{x_{0}\})})$.
 \item [b)] If $K\subset\R$, then $(CT.2)$ implies $(CT.3)$. Indeed, we choose $\mathcal{J}_{3}(n):=\mathcal{J}_{2}(n)$, observe that 
 $M:=S_{n}(\{x_{0}\})\setminus S_{n}(K)$ is a compact set as $K\subset\R$ and 
 \begin{flalign*}
 &\hspace{0.37cm}|G_{K}|_{\{x_{0}\},n,\mathcal{J}_{3}(n)}\\
 &\leq |G_{K}|_{K,n,\mathcal{J}_{2}(n)}
      +\|\nu_{\mathcal{J}_{2}(n)}^{-1}\|_{\overline{U_{n}(\{x_{0}\})}}\,\bigr\|z\mapsto \|G_{K}(z)\|_{\overline{U_{n}(\{x_{0}\})}}\,\nu_{n}(z)\bigl\|_{M}.
 \end{flalign*}
\end{enumerate}
\end{rem}

We will see that the conditions $(qV_{\infty})$, $(qL^{1})$ and $(CT)$
hold with $g_{K}(z):=\exp(-z^{2})$, $z\in\C$, 
for $\nu_{n}(z):=\exp(a_{n}|\re(z)|^{\gamma}$), $z\in\C$, where $0<\gamma\leq 1$ and $(a_{n})_{n\in\N}$ is increasing without change of sign.

\begin{prop}\label{prop:duality_forward}
Let $K\subset\overline{\R}$ be a non-empty compact set and $\mathcal{V}:=(\nu_{n})_{n\in\N}$ a directed family 
of continuous weights on $\C$ which fulfils $(qV_{\infty})$ and $(qL^{1})$, $\gamma_{K,n,r}$ the path from \prettyref{def:duality_path} 
and $E$ a locally convex Hausdorff space. 
If 
\begin{enumerate}
 \item [(i)] $K\subset\R$ and $E$ is locally complete, or
 \item [(ii)] $E$ is sequentially complete,
\end{enumerate}
then the map 
\[
 H_{K}\colon \mathcal{OV}(\overline{\C}\setminus K,E)/\mathcal{OV}(\overline{\C},E)
 \to L_{b}(\mathcal{OV}^{-1}_{\operatorname{ind}}(K),E)
\]
given by 
\[
 H_{K}(f)(\varphi):=\int_{\gamma_{K,n,r}}F(\zeta)\varphi(\zeta)\d\zeta
\]
for $f=[F]\in \mathcal{OV}(\overline{\C}\setminus K,E)/\mathcal{OV}(\overline{\C},E)$ and 
$\varphi\in \mathcal{O}\nu_{n}^{-1}(\overline{U_{n}(K)})$, $n\in\N$, 
is well-defined, linear and continuous. For all non-empty compact sets $K_{1}\subset K$ it holds that
\begin{equation}\label{unabh.H}
{H_{K}}_{\mid\mathcal{OV}(\overline{\C}\setminus K_{1},E)/\mathcal{OV}(\overline{\C},E)}=H_{K_{1}}
\end{equation}
on $\mathcal{OV}^{-1}_{\operatorname{ind}}(K)$.
\end{prop}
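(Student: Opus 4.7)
The plan is to establish each of the four claims in the statement—well-definedness, linearity, continuity, and the restriction compatibility \eqref{unabh.H}—in turn, building directly on \prettyref{prop:pettis-integral} and \prettyref{prop:DFS}.

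Well-definedness requires three independences: of the parameter $r$ in the path, of the representative $F$ of the class $[F]$, and of the step $n$ at which $\varphi$ is represented in the inductive limit. Parts c) and d) of \prettyref{prop:pettis-integral} dispose of the first two: part c) gives $r$-independence, and part d) ensures that the integral vanishes on $\mathcal{OV}(\overline{\C},E)$, so the formula descends to the quotient. For independence of $n$, if $\varphi\in \mathcal{O}\nu_n^{-1}(\overline{U_n(K)})\cap\mathcal{O}\nu_{\widetilde n}^{-1}(\overline{U_{\widetilde n}(K)})$ with $n<\widetilde n$, I would compare $\int_{\gamma_{K,n,r}}F\varphi$ with $\int_{\gamma_{K,\widetilde n,\widetilde r}}F\varphi$: both paths lie in $U_n(K)$, where $\varphi$ is holomorphic, and the region between them avoids $K$, so $F\varphi$ is holomorphic there; pairing with $e'\in E'$ reduces the matter to the scalar Cauchy theorem, after which the Hahn--Banach theorem closes the argument.

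Linearity is immediate from linearity of the Pettis integral and of passing to a quotient class, so the substantive remaining point is continuity. Here I would exploit that $\mathcal{OV}^{-1}_{\operatorname{ind}}(K)$ is a (DFS)-space by \prettyref{prop:DFS} a), so every bounded set $B$ is contained in a bounded subset of some step $\mathcal{O}\nu_n^{-1}(\overline{U_n(K)})$, say $B\subset\{\varphi\;|\;\|\varphi\|_n\leq N\}$ for some $n\in\N$ and $N>0$. Applying \prettyref{prop:pettis-integral} b) yields $m\in\N$ and $C>0$ with
\[
\sup_{\varphi\in B}p_{\alpha}\bigl(H_K([F])(\varphi)\bigr)\leq CN|F|_{m,\alpha}
\]
for every $\alpha\in\mathfrak{A}$ and every representative $F$. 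Taking the infimum over representatives bounds this by $CN$ times a continuous quotient seminorm, which simultaneously shows that $H_K([F])\in L(\mathcal{OV}^{-1}_{\operatorname{ind}}(K),E)$ and that $H_K$ is continuous into $L_b(\mathcal{OV}^{-1}_{\operatorname{ind}}(K),E)$.

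For the compatibility \eqref{unabh.H}, an analogous deformation argument applies. Given $K_1\subset K$ compact, $F\in\mathcal{OV}(\overline{\C}\setminus K_1,E)$ and $\varphi\in\mathcal{O}\nu_n^{-1}(\overline{U_n(K)})$, one has $U_n(K_1)\subset U_n(K)$, so $\varphi$ restricts to $\mathcal{O}\nu_n^{-1}(\overline{U_n(K_1)})$, and $\gamma_{K_1,n,\widetilde r}$ can be deformed into $\gamma_{K,n,r}$ through the region $(\C\setminus K_1)\cap U_n(K)$ on which $F\cdot\varphi$ is holomorphic. Pairing with each $e'\in E'$ and invoking Cauchy's integral theorem together with the Hahn--Banach theorem identifies the two integrals, giving $H_{K}([F])(\varphi)=H_{K_1}([F])(\varphi)$. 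I expect the main obstacle to be the path deformation in the presence of unbounded components (when $\pm\infty\in K$): Cauchy's theorem cannot be applied directly on an unbounded region, so the paths must be closed by vertical segments $q+i[-\widetilde r_k,\widetilde r_k]$ at large $\re(z)=q$ and the resulting error controlled as $q\to\infty$ by the $L^1$-integrability condition in \prettyref{cond:DFS} (ii), exactly in the spirit of parts c) and d) of \prettyref{prop:pettis-integral}. Once that capping is in place, every deformation statement reduces to the scalar Cauchy theorem and one Hahn--Banach argument.
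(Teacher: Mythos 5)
Your proposal is correct and follows essentially the same route as the paper: well-definedness via parts a), c), d) of Proposition \ref{prop:pettis-integral} (with the $n$-independence and the compatibility \eqref{unabh.H} both reduced to the scalar Cauchy theorem plus Hahn--Banach, capping unbounded components exactly as in parts c) and d)), and continuity via the estimate of part b), the infimum over representatives, and the fact that the step balls form a fundamental system of bounded sets in the (DFS)-space $\mathcal{OV}^{-1}_{\operatorname{ind}}(K)$.
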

\begin{proof}
In the following we omit the index $K$ in $H_{K}$ if no confusion seems to be likely. 
Let $f=[F]\in \mathcal{OV}(\overline{\C}\setminus K,E)/\mathcal{OV}(\overline{\C}\,E)$ 
and $\varphi\in\mathcal{OV}^{-1}_{\operatorname{ind}}(K)$. 
Then there is $n\in\N$ such that $\varphi\in \mathcal{O}\nu_{n}^{-1}(\overline{U_{n}(K)})$. 
Due to \prettyref{prop:pettis-integral} a) and d) $H(f)(\varphi)\in E$ 
and $H(f)$ is independent of the representative $F$ of $f$.  
From \prettyref{prop:pettis-integral} c) follows that $H(f)$ is well-defined on $\mathcal{OV}^{-1}_{\operatorname{ind}}(K)$, i.e.\ 
for all $k\in\N$, $k\geq n$, and $\varphi\in \mathcal{O}\nu_{n}^{-1}(\overline{U_{n}(K)})$ it holds that
\[
 H(f)(\varphi)=H(f)(\varphi_{\mid U_{k}(K)})=H(f)(\pi_{n,k}(\varphi)).
\]
For every $n\in\N$ there are $m\in\N$ and $C>0$ such that for all 
$f=[F]\in \mathcal{OV}(\overline{\C}\setminus K,E)/\mathcal{OV}(\overline{\C}\,E)$, 
$\varphi\in\mathcal{O}\nu_{n}^{-1}(\overline{U_{n}(K)})$ and $\alpha\in\mathfrak{A}$
\begin{equation}\label{satz2.2.3}
 p_{\alpha}(H(f)(\varphi))
 \leq C |F|_{m,\alpha}\|\varphi\|_{n}
\end{equation}
by \prettyref{prop:pettis-integral} b), which implies that $H(f)\in L(\mathcal{O}\nu_{n}^{-1}(\overline{U_{n}(K)}),E)$ 
for every $n\in\N$. We deduce that $H(f)\in L(\mathcal{OV}^{-1}_{\operatorname{ind}}(K),E)$ 
by \cite[3.6 Satz, p.\ 117]{F/W/Buch}. Let
\[
 q\colon\mathcal{OV}(\overline{\C}\setminus K,E)\to\mathcal{OV}(\overline{\C}\setminus K,E)/\mathcal{OV}(\overline{\C},E),
\; q(F):=[F],
\]
denote the quotient map. We equip the quotient space with its usual quotient topology generated by 
the system of quotient seminorms given by
\[
 |f|_{l,\alpha}^{\wedge}:=\inf_{F\in q^{-1}(f)}|F|_{l,\alpha},\quad 
 f\in\mathcal{OV}(\overline{\C}\setminus K,E)/\mathcal{OV}(\overline{\C},E),
\]
for $l\in\N$ and $\alpha\in\mathfrak{A}$. 
Then the quotient space, equipped with these seminorms, is a locally convex space (but maybe not Hausdorff). 
Since \eqref{satz2.2.3} holds for every representative $F$ of $f$, we obtain for every 
$f\in\mathcal{OV}(\overline{\C}\setminus K,E)/\mathcal{OV}(\overline{\C},E)$, 
$\varphi\in\mathcal{O}\nu_{n}^{-1}(\overline{U_{n}(K)})$, $n\in\N$, and $\alpha\in\mathfrak{A}$ that 
\begin{equation}\label{satz2.2.4}
 p_{\alpha}(H(f)(\varphi))\leq C \inf_{F\in q^{-1}(f)}|F|_{m,\alpha}\|\varphi\|_{n}=C|f|_{m,\alpha}^{\wedge}\|\varphi\|_{n}.
\end{equation}
Now, let $M\subset\mathcal{OV}^{-1}_{\operatorname{ind}}(K)$ be a bounded set. 
Since the sequence $(B_{n})_{n\in\N}$ of closed unit balls $B_n$ of 
$\mathcal{O}\nu_{n}^{-1}(\overline{U_{n}(K)})$ is a fundamental system of bounded sets
in $\mathcal{OV}^{-1}_{\operatorname{ind}}(K)$ by \cite[Proposition 25.19, p.\ 303]{meisevogt1997}, 
there exist $n\in\N$ and $\lambda>0$ with $M\subset\lambda B_{n}$. We derive 
from \eqref{satz2.2.4} that
\[
 \sup_{\varphi\in M}p_{\alpha}(H(f)(\varphi))
 \leq|\lambda| C|f|_{m, \alpha}^{\wedge},
\]
proving the continuity of $H$.

Moreover, let $K_{1}\subset\overline{\R}$ be compact and $K_{1}\subset K$. 
We observe that for every $F\in\mathcal{OV}(\overline{\C}\setminus K_{1},E)$ and  
$\varphi\in\mathcal{O}\nu_{n}^{-1}(\overline{U_{n}(K)})$, $n\in\N$, it holds that
\[
 H_{K}([F])(\varphi)=\int_{\gamma_{K,n,r}}F(\zeta)\varphi(\zeta)\d\zeta
=\int_{\gamma_{K_{1},n,r}}F(\zeta)\varphi(\zeta)\d\zeta
=H_{K_{1}}([F])(\varphi)
\]
by $(qV_{\infty})$ and $(qL^{1})$ using Cauchy's integral theorem and the Hahn-Banach theorem as 
in \prettyref{prop:pettis-integral} c) and d). This yields to
\[
{H_{K}}_{\mid\mathcal{OV}(\overline{\C}\setminus K_{1},E)/\mathcal{OV}(\overline{\C},E)}=H_{K_{1}}
\]
on $\mathcal{OV}^{-1}_{\operatorname{ind}}(K)$.
\end{proof}

Now, we take a closer look at the potential inverse of $H_{K}$.

\begin{prop}\label{prop:duality_backward}
Let $K\subset\overline{\R}$ be a non-empty compact set and $\mathcal{V}:=(\nu_{n})_{n\in\N}$ a directed family 
of continuous weights on $\C$ which fulfils $(qV_{\infty})$, $(CT.1)$ and $(CT.2)$, 
and $E$ be a locally convex Hausdorff space. 
Then the map 
\[
 \Theta_{K}\colon  L_{b}(\mathcal{OV}^{-1}_{\operatorname{ind}}(K),E)
 \to\mathcal{OV}(\overline{\C}\setminus K,E)/\mathcal{OV}(\overline{\C},E)
\]
given by 
\[
   \Theta_{K}(T)
 :=\bigl[\C\setminus K \ni z\longmapsto \frac{1}{2\pi i}\langle T,\frac{g_{K}(z-\cdot)}{z-\cdot}\rangle\bigr],
 \quad T\in L_{b}(\mathcal{OV}^{-1}_{\operatorname{ind}}(K),E), 
\] 
is well-defined, linear and continuous. 
\end{prop}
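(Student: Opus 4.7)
Fix $T\in L_b(\mathcal{OV}^{-1}_{\operatorname{ind}}(K),E)$ and define $F_T(z):=\frac{1}{2\pi i}\langle T,\varphi_z\rangle$ for $z\in\C\setminus K$, where $\varphi_z(\zeta):=g_K(z-\zeta)/(z-\zeta)$. My plan is to check, in order, that (i) $\varphi_z\in\mathcal{OV}^{-1}_{\operatorname{ind}}(K)$, so that $F_T(z)\in E$ is well-defined; (ii) $z\mapsto F_T(z)$ is holomorphic; (iii) $F_T\in\mathcal{OV}(\overline{\C}\setminus K,E)$ with quantitative estimates; and finally (iv) these estimates yield linearity and continuity of $\Theta_K$ into the quotient.

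For (i) and (ii), fix $z_0\in\C\setminus K$ and take $n\in\N$ large enough so that the conclusion of \prettyref{cond:duality} (i) applies and, simultaneously, $\mathbb{B}_{1/n}(z_0)\cap\overline{U_n(K)}=\varnothing$. Both hold for all sufficiently large $n$ because $z_0$ is at positive Euclidean distance from $K\cap\C$ and the arms of $U_n(K)$ at $\pm\infty$ retreat as $n\to\infty$. Since $\zeta\mapsto 1/(z-\zeta)$ is then holomorphic and uniformly bounded on $\overline{U_n(K)}$ for $z$ near $z_0$, the product $\varphi_z$ lies in $\mathcal{O}\nu_n^{-1}(\overline{U_n(K)})\hookrightarrow\mathcal{OV}^{-1}_{\operatorname{ind}}(K)$. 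For holomorphy I would write
\[
\frac{\varphi_{z_0+h}-\varphi_{z_0}}{h}=\frac{1}{z_0+h-\cdot}\cdot\frac{G_K(z_0+h)-G_K(z_0)}{h}-\frac{G_K(z_0)}{(z_0+h-\cdot)(z_0-\cdot)}
\]
and let $h\to 0$ inside the Banach space $\mathcal{O}\nu_n^{-1}(\overline{U_n(K)})$: \prettyref{cond:duality} (i) handles the first factor, while the rational functions converge uniformly on $\overline{U_n(K)}$ and multiplication by a uniformly bounded holomorphic function is continuous in the weighted sup norm. Applying the continuous linear $T$ transfers complex differentiability to $F_T$ with values in $E$.

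For (iii), fix $n\in\N$ and $\alpha\in\mathfrak{A}$. By continuity of $T$ on the inductive limit, for every $k\in\N$ there is $C_T(k,\alpha)>0$ with $p_\alpha(T\varphi)\leq C_T(k,\alpha)\|\varphi\|_k$ on $\mathcal{O}\nu_k^{-1}(\overline{U_k(K)})$. Choose $k:=\mathcal{J}_2(n)$ from \prettyref{cond:duality} (ii). A direct case analysis using \prettyref{rem:fin_many_comp+path} shows $\d^{|\cdot|}(S_n(K),\overline{U_k(K)})\geq 1/n-1/k>0$, hence $|z-\zeta|^{-1}\leq nk/(k-n)=:C_1$ for all $z\in S_n(K)$ and $\zeta\in\overline{U_k(K)}$. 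Combined with \prettyref{cond:duality} (ii) this yields $\|\varphi_z\|_k\nu_n(z)\leq C_1|G_K|_{n,\mathcal{J}_2(n)}$ uniformly in $z\in S_n(K)$, and therefore
\[
|F_T|_{n,\alpha}\leq\frac{C_1|G_K|_{n,\mathcal{J}_2(n)}}{2\pi}C_T(\mathcal{J}_2(n),\alpha)<\infty.
\]
Thus $F_T\in\mathcal{OV}(\overline{\C}\setminus K,E)$ and $\Theta_K(T):=[F_T]$ is a well-defined element of the quotient.

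For (iv), linearity is immediate from the linear dependence of $\varphi_z$ and $T$. Continuity follows by observing that the closed unit balls $B_k$ of $\mathcal{O}\nu_k^{-1}(\overline{U_k(K)})$ form a fundamental system of bounded sets in $\mathcal{OV}^{-1}_{\operatorname{ind}}(K)$ by \cite[Proposition 25.19, p.\ 303]{meisevogt1997}, so the seminorms $T\mapsto\sup_{\varphi\in B_k}p_\alpha(T\varphi)$ generate the topology of $L_b(\mathcal{OV}^{-1}_{\operatorname{ind}}(K),E)$. The estimate above then reads $|F_T|_{n,\alpha}\leq(C_1|G_K|_{n,\mathcal{J}_2(n)}/(2\pi))\sup_{\varphi\in B_{\mathcal{J}_2(n)}}p_\alpha(T\varphi)$, and since the quotient seminorm $|\cdot|_{n,\alpha}^\wedge$ is dominated by $|\cdot|_{n,\alpha}$ on representatives, the continuity of $\Theta_K$ follows. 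The main obstacle is step (ii): upgrading the pointwise holomorphy of $\varphi_z$ to holomorphy in the weighted Banach-space norm, which requires \prettyref{cond:duality} (i) precisely in the form stated and a careful verification that multiplication by the bounded holomorphic factor $1/(z-\cdot)$ is continuous on $\mathcal{O}\nu_n^{-1}(\overline{U_n(K)})$.
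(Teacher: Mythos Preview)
Your proof is correct and follows essentially the same route as the paper: verify that $\varphi_z\in\mathcal{O}\nu_m^{-1}(\overline{U_m(K)})$ for suitable $m$, establish holomorphy of $z\mapsto\varphi_z$ in that Banach space via \prettyref{cond:duality} (i), push this through $T$, and then use \prettyref{cond:duality} (ii) together with the positive distance between $S_n(K)$ and $\overline{U_k(K)}$ to bound $|F_T|_{n,\alpha}$ by a seminorm of $T$ in $L_b$. The paper writes out the difference quotient for $\varphi_z$ term by term and bounds each piece separately, whereas you use the cleaner product-rule identity; both arguments rely on the same ingredients (uniform bound on $1/(z-\cdot)$ and the Banach-space derivative $G_K'(z_0)$), so this is only a cosmetic difference.

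Two small points to tighten. First, your explicit bound $\d^{|\cdot|}(S_n(K),\overline{U_k(K)})\geq 1/n-1/k$ is not literally correct at $n=1$, since by definition $S_1(K)=S_{3/2}(K)$; the paper sidesteps this by merely asserting $\operatorname{D}_{l,k}>0$ for some $k$, which is all that is actually needed. Second, in step (ii) you must argue that $\tfrac{1}{z_0+h-\cdot}\cdot\tfrac{G_K(z_0+h)-G_K(z_0)}{h}\to\tfrac{1}{z_0-\cdot}\,G_K'(z_0)$ in $\|\cdot\|_m$, which requires not just continuity of multiplication by a fixed bounded function but a two-term product estimate using that $1/(z_0+h-\cdot)\to 1/(z_0-\cdot)$ uniformly on $\overline{U_m(K)}$ and is uniformly bounded there; you say this, but it deserves one more sentence. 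Neither point is a genuine gap.
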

\begin{proof}
We omit the index $K$ in $g_{K}$ and $G_{K}$ from condition $(CT)$. Due to condition $(qV_{\infty})$ 
the inductive limit $\mathcal{OV}^{-1}_{\operatorname{ind}}(K)$ exists by \prettyref{prop:DFS} a). 
For $z\in\C\setminus K$ and $\zeta\in\C\setminus\{z\}$ we define 
\[
 \widetilde{g}(z,\zeta):=\frac{G(z)(\zeta)}{z-\zeta}=\frac{g(z-\zeta)}{z-\zeta}
\]
and note that $\widetilde{g}(z,\cdot)\in\mathcal{O}(\C\setminus\{z\})$. 
Let $z\in\C\setminus K$. Then there is $n\in\N$ such that 
$G(\mathbb{B}_{1/n}(z))\subset\mathcal{O}\nu_{n}^{-1}(\overline{U_{n}(K)})$ by $(CT.1)$. 
Further, there is $k=k(z)\in\N$ such that 
\[
\d_{k}:=\d(\mathbb{B}_{1/k}(z), \overline{U_{k}(K)})>0.
\]
We set $m:=\max(k,n)$ and obtain 
\[
  \|\widetilde{g}(w,\cdot)\|_{m}
  =\sup_{\zeta\in\overline{U_{m}(K)}}\frac{1}{|w-\zeta|}|G(w)(\zeta)|\nu_{m}(\zeta)^{-1}
 \leq \frac{1}{\d^{|\cdot|}(\{w\}, \overline{U_{k}(K)})}\|G(w)\|_{n}<\infty
\]
for all $w\in\mathbb{B}_{1/m}(z)$. 
We deduce that $\widetilde{g}(w,\cdot)\in\mathcal{O}\nu_{m}^{-1}(\overline{U_{m}(K)})$ for all 
$w\in\mathbb{B}_{1/m}(z)$.
We note that $G'(z)(\zeta)=g^{(1)}(z-\zeta)$ for all $\zeta\in\overline{U_{n}(K)}$ since the topology of 
$\mathcal{O}\nu_{n}^{-1}(\overline{U_{n}(K)})$ is stronger than the topology of pointwise convergence, 
which implies that
\[
\frac{\partial}{\partial z}\widetilde{g}(z,\zeta)=\frac{g^{(1)}(z-\zeta)}{z-\zeta}-\frac{g(z-\zeta)}{(z-\zeta)^{2}}
=\frac{G'(z)(\zeta)}{z-\zeta}-\frac{G(z)(\zeta)}{(z-\zeta)^{2}}
\]
for all $\zeta\in\overline{U_{n}(K)}$. 
Let $h\in\C$ with $0<|h|<1/k$. Then
\[
 \bigl|\frac{1}{z+h-\zeta}-\frac{1}{z-\zeta}\bigr|=\bigl|\frac{-h}{(z+h-\zeta)(z-\zeta)}\bigr|
 \leq \frac{|h|}{\d_{k}^2}
\]
and 
\[
  \bigl|\frac{1}{h}\bigl(\frac{1}{z+h-\zeta}-\frac{1}{z-\zeta}\bigr)+\frac{1}{(z-\zeta)^{2}}\bigr|
 = \bigl|\frac{h}{(z+h-\zeta)(z-\zeta)^{2}}\bigr|
 \leq \frac{|h|}{\d_{k}^3}
\]
for all $\zeta\in\overline{U_{k}(K)}$. It follows that
\begin{flalign*}
&\hspace{0.37cm} \bigl|\frac{\widetilde{g}(z+h,\zeta)-\widetilde{g}(z,\zeta)}{h}
 -\frac{\partial}{\partial z}\widetilde{g}(z,\zeta)\bigr|\\
&\leq\phantom{+} \frac{1}{|z+h-\zeta|}\bigl|\frac{G(z+h)(\zeta)-G(z)(\zeta)}{h}-G'(z)(\zeta)\bigr|+|G'(z)(\zeta)|\bigl|\frac{1}{z+h-\zeta}-\frac{1}{z-\zeta}\bigr| \\
&\phantom{\leq} + |G(z)(\zeta)| \bigl|\frac{1}{h}\bigl(\frac{1}{z+h-\zeta}-\frac{1}{z-\zeta}\bigr)+\frac{1}{(z-\zeta)^{2}}\bigr|
\end{flalign*}
for all $\zeta\in\overline{U_{m}(K)}$, which implies 
\begin{flalign*}
&\hspace{0.37cm} \|\frac{\widetilde{g}(z+h,\cdot)-\widetilde{g}(z,\cdot)}{h}-\frac{\partial}{\partial z}\widetilde{g}(z,\cdot)\|_{m}\\
&\leq \frac{1}{\d_{k}}\bigl\|\frac{G(z+h)-G(z)}{h}-G'(z)\bigr\|_{n}+\|G'(z)\|_{n}\frac{|h|}{\d_{k}^2}+\|G(z)\|_ {n}\frac{|h|}{\d_{k}^3}.
\end{flalign*}
We conclude that $\tfrac{\partial}{\partial z}\widetilde{g}(z,\cdot)\in\mathcal{O}\nu_{m}^{-1}(\overline{U_{m}(K)})$ 
and $\tfrac{\widetilde{g}(z+h,\cdot)-\widetilde{g}(z,\cdot)}{h}$ converges to 
$\tfrac{\partial}{\partial z}\widetilde{g}(z,\cdot)$ in $\mathcal{O}\nu_{m}^{-1}(\overline{U_{m}(K)})$ 
as $h\to 0$ by $(CT.1)$. 
Hence for all $T\in L(\mathcal{OV}^{-1}_{\operatorname{ind}}(K),E)$ the limit
\begin{align*}
  \lim_{\substack{h\to 0\\h\in\C,\,h\neq 0}}\frac{\langle T,\widetilde{g}(z+h,\cdot)\rangle-\langle T,\widetilde{g}(z,\cdot)\rangle}{h}
&=\langle T,\lim_{\substack{h\to 0\\h\in\C,\,h\neq 0}}\frac{\widetilde{g}(z+h,\cdot)-\widetilde{g}(z,\cdot)}{h}\rangle
=\langle T,\frac{\partial}{\partial z}\widetilde{g}(z,\cdot)\rangle
\end{align*}
exists in $E$, meaning that $(z\mapsto \frac{1}{2\pi i}\langle T,\frac{g(z-\cdot)}{z-\cdot}\rangle)\in\mathcal{O}(\C\setminus K,E)$. 

Let $l\in\N$. Then there is $\mathcal{J}_{2}(l)>l$ such that 
$G(S_{l}(K))\subset \mathcal{O}\nu_{\mathcal{J}_{2}(l)}^{-1}(\overline{U_{\mathcal{J}_{2}(l)}(K)})$ by $(CT.2)$. 
Moreover, there is $k\in\N$ such that 
\[
\operatorname{D}_{l,k}:=\d(S_{l}(K),\overline{U_{k}(K)})>0.
\]
Again, it follows that $\widetilde{g}(S_{l}(K), \cdot)\subset\mathcal{O}\nu_{m}^{-1}(\overline{U_{m}(K)})$ with $m:=\max(\mathcal{J}_{2}(l),k)$. 
Furthermore, we observe that $M:=\{\widetilde{g}(z,\cdot)\nu_{l}(z)\;|\;z\in S_{l}(K)\}\subset \mathcal{O}\nu_{m}^{-1}(\overline{U_{m}(K)})$ and 
\[
\sup_{\varphi\in M}\|\varphi\|_{m}=\sup_{z\in S_{l}(K)}\|\widetilde{g}(z,\cdot)\|_{m}\nu_{l}(z)
\leq \frac{1}{\operatorname{D}_{l,k}}\|G(z)\|_{K,l,\mathcal{J}_{2}(l)}<\infty
\]
by $(CT.2)$, showing that $M$ is bounded in $\mathcal{OV}^{-1}_{\operatorname{ind}}(K)$ 
by \cite[Proposition 25.19, p.\ 303]{meisevogt1997}. 
For every $\alpha\in\mathfrak{A}$ and $T\in L(\mathcal{OV}^{-1}_{\operatorname{ind}}(K),E)$ we have 
\begin{align*}
 |\Theta_{K}(T)|_{l,\alpha}^{\wedge}&\leq\bigl|z\mapsto \frac{1}{2\pi i}\langle T,\widetilde{g}(z,\cdot)\rangle\bigr|_{l,\alpha}
 =\frac{1}{2\pi}\sup_{z\in S_{l}(K)}p_{\alpha}(\langle T,\widetilde{g}(z,\cdot)\nu_{l}(z)\rangle)\\
 &=\frac{1}{2\pi}\sup_{\varphi\in M}p_{\alpha}(T(\varphi))<\infty
\end{align*}
and therefore the map 
\[
\Theta_{K}\colon L_{b}(\mathcal{OV}^{-1}_{\operatorname{ind}}(K),E)\to\mathcal{OV}(\overline{\C}\setminus K,E)/\mathcal{OV}(\overline{\C},E)
\]
is well-defined, clearly linear and continuous.
\end{proof}

The map $\Theta_{K}$ is sometimes called (weighted) Cauchy transformation for obvious reasons (see \cite[p.\ 84]{Mori2}).

\begin{thm}\label{thm:duality}
Let $K\subset\overline{\R}$ be a non-empty compact set and $\mathcal{V}:=(\nu_{n})_{n\in\N}$ a directed family 
of continuous weights on $\C$ which fulfils $(qV_{\infty})$, $(qL^{1})$ and $(CT)$, 
and $E$ a locally convex Hausdorff space. 
If 
\begin{enumerate}
 \item [(i)] $K\subset\R$ and $E$ is locally complete, or
 \item [(ii)] $K\cap\{\pm\infty\}$ has no isolated points in $K$ and $E$ is sequentially complete,
\end{enumerate}
then the map 
\[
H_{K}\colon \mathcal{OV}(\overline{\C}\setminus K,E)/\mathcal{OV}(\overline{\C},E) 
\to L_{b}(\mathcal{OV}^{-1}_{\operatorname{ind}}(K),E)
\]
is a topological isomorphism with inverse $\Theta_{K}$.
\end{thm}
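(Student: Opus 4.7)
The plan is to verify that $H_K$ and $\Theta_K$ are mutual inverses. Both are continuous and linear by \prettyref{prop:duality_forward} and \prettyref{prop:duality_backward}, so it remains to establish $H_K \circ \Theta_K = \mathrm{id}$ on $L_b(\mathcal{OV}^{-1}_{\operatorname{ind}}(K),E)$ and $\Theta_K \circ H_K = \mathrm{id}$ on the quotient space.

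For the first identity, fix $T$ and $\varphi \in \mathcal{O}\nu_n^{-1}(\overline{U_n(K)})$. Using path-independence (\prettyref{prop:pettis-integral} c)) I would replace $\gamma_{K,n,r}$ by a contour $\gamma_{K,n',r'}$ whose enclosed region lies inside $\overline{U_n(K)}$ and contains $\overline{U_m(K)}$ for $m$ large enough that $g_K(\zeta-\cdot)/(\zeta-\cdot) \in \mathcal{O}\nu_m^{-1}(\overline{U_m(K)})$ for every $\zeta$ on the contour. Testing with an arbitrary $e' \in E'$ reduces the identity $H_K(\Theta_K(T))(\varphi) = T(\varphi)$ to the scalar-valued case, and the Pettis-integrability of $\zeta \mapsto g_K(\zeta-\cdot)\varphi(\zeta)/(\zeta-\cdot)$ as an $\mathcal{O}\nu_m^{-1}(\overline{U_m(K)})$-valued map (continuous on the compact portion of the contour, and $L^1$-controlled at infinity via \prettyref{cond:DFS} (ii) if $\pm\infty\in K$) justifies pulling $T$ through the contour integral:
\[
H_K(\Theta_K(T))(\varphi) = T\!\left(\frac{1}{2\pi i}\int_{\gamma_{K,n',r'}}\frac{g_K(\zeta-\cdot)}{\zeta-\cdot}\varphi(\zeta)\,\d\zeta\right).
\]
For any $w$ in the region enclosed by the contour, $\zeta \mapsto g_K(\zeta-w)\varphi(\zeta)$ is holomorphic on $U_n(K)$, so Cauchy's integral formula evaluates the inner integral to $g_K(0)\varphi(w) = \varphi(w)$. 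The argument of $T$ therefore coincides with $\varphi$ as an element of $\mathcal{OV}^{-1}_{\operatorname{ind}}(K)$, and we obtain $T(\varphi)$.

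For the second identity, fix $F \in \mathcal{OV}(\overline{\C}\setminus K,E)$ and set $\Phi(z) := \Theta_K(H_K([F]))(z) - F(z)$ on $\C\setminus K$. I would complement $\gamma = \gamma_{K,n,r}$ by a large outer contour $\Gamma_p$ running along the horizontal lines $\{t \pm ip : t \in \R\}$ (and closed off at $\re\zeta = \pm q$, later sent to $\pm\infty$) with $p = \mathcal{J}_4(n)$ from \prettyref{cond:duality} (iv), so that $z$ lies in the region bounded between $\gamma$ and $\Gamma_p$ and the only singularity of $\zeta \mapsto F(\zeta)g_K(z-\zeta)/(z-\zeta)$ there is the simple pole at $\zeta = z$ with residue $-F(z)$ (using $g_K(0) = 1$). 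Cauchy's theorem combined with the Hahn--Banach argument from the proof of \prettyref{prop:pettis-integral} c), d) yields
\[
\Phi(z) = \frac{1}{2\pi i}\int_{\Gamma_p} F(\zeta)\frac{g_K(z-\zeta)}{z-\zeta}\,\d\zeta,
\]
which is holomorphic in $z$ throughout the strip $\{|\im z| < p\}$ and, by varying $p$, extends to an entire function. The bound $|\Phi|_{l,\alpha} < \infty$ on each $S_l(K)$ will follow from the uniform $L^1$-integrability on horizontal lines supplied by \prettyref{cond:duality} (iv), the growth of $F$, and the bounds in \prettyref{cond:duality} (ii); hence $\Phi \in \mathcal{OV}(\overline{\C}, E)$ and $\Theta_K(H_K([F])) = [F]$ in the quotient.

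The principal technical obstacle arises in the case $K \cap \{\pm\infty\} \neq \varnothing$, where neither $\gamma$ nor $\Gamma_p$ is a compact contour: the enclosed region degenerates to an unbounded belt inside the strip, the Fubini exchange and the residue calculation must be justified by a limiting argument cutting off the horizontal portions at $\re\zeta = \pm q$ and sending $q \to \infty$, and the $L^1$-decay of $g_K/\nu_m$ from \prettyref{cond:DFS} (ii) and \prettyref{cond:duality} (iv) is essential both to make the contour integrals convergent and to close up $\Gamma_p$ at infinity with vanishing contribution. Once this limiting procedure is in place, the identification of the residue and the growth bound for $\Phi$ follow by the same estimates that already appeared in the proofs of \prettyref{prop:pettis-integral} and \prettyref{prop:duality_backward}.
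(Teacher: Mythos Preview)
Your argument for $\Theta_{K}\circ H_{K}=\id$ follows essentially the same route as the paper: represent $F(z)$ via a Cauchy-type formula along $\Gamma_{p}-\gamma_{K,n,r}$, identify the difference $\Phi$ with the $\Gamma_{p}$-integral alone, extend $\Phi$ holomorphically across the strip, and bound it using \prettyref{cond:duality}~(iv). That part is fine.

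The gap is in your treatment of $H_{K}\circ\Theta_{K}=\id$. You propose to pull $T$ (or rather $e'\circ T$) through the contour integral by showing that $\zeta\mapsto \widetilde{g}(\zeta,\cdot)\varphi(\zeta)$ is Pettis-integrable in some fixed step $\mathcal{O}\nu_{m}^{-1}(\overline{U_{m}(K)})$. But when $K\cap\{\pm\infty\}\neq\varnothing$ the contour is unbounded, and neither \prettyref{cond:DFS}~(ii) nor any part of \prettyref{cond:duality} gives you the required $L^{1}$-control of $\|G_{K}(\zeta)\|_{m}\nu_{n}(\zeta)$ for $\zeta$ ranging along the contour \emph{inside} the neighbourhood of $K$; the growth estimates in \prettyref{cond:duality}~(ii) and~(v) only concern $z\in S_{n}(K)$, i.e.\ points \emph{outside} the neighbourhood. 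The same obstruction blocks the subsequent application of Cauchy's formula to evaluate the inner integral to $\varphi(w)$: closing off the unbounded contour at infinity requires decay of $g_{K}(\zeta-w)\varphi(\zeta)$ that the abstract hypotheses do not supply. Tellingly, your argument never invokes \prettyref{cond:duality}~(iii) and never uses the assumption that $K\cap\{\pm\infty\}$ has no isolated points in $K$.

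The paper circumvents this by a density argument: after reducing to $E=\C$, one uses \prettyref{prop:DFS}~b) (which is exactly where the ``no isolated points'' hypothesis enters) to reduce to $T=\delta_{x_{0}}^{(n)}$ with $x_{0}\in K\cap\R$. For such $T$, \prettyref{cond:duality}~(iii) shows that $z\mapsto\langle\delta_{x_{0}}^{(n)},\widetilde{g}(z,\cdot)\rangle$ lies in $\mathcal{OV}(\overline{\C}\setminus\{x_{0}\})$, so the contour can be shrunk to a small circle $\partial\mathbb{B}_{s}(x_{0})$. On that compact circle the computation is elementary: expand $\widetilde{g}(z,\zeta)$ as $\tfrac{1}{z-\zeta}$ plus a function holomorphic in $\zeta$, differentiate $n$ times in $\zeta$ at $x_{0}$, and apply Cauchy's formula for derivatives to recover $\varphi^{(n)}(x_{0})$. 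This is the missing idea.
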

\begin{proof}
As before we omit the index $K$ of $H_{K}$, $\Theta_{K}$ and $G_{K}$ if it is not necessary. 
As a consequence of \prettyref{prop:duality_forward} and \prettyref{prop:duality_backward} 
the maps $H$ and $\Theta$ are linear and continuous. First, we prove that $\Theta\circ H=\id$ on 
$\mathcal{OV}(\overline{\C}\setminus K,E)/\mathcal{OV}(\overline{\C},E)$, which implies 
the injectivity of $H$.
Let $p\in\N$, $p\geq 2$. We choose $n\in\N$ such that $\d^{|\cdot|}(S_{p}(K),\overline{U_{n}(K)})>0$. 
We define the path $\Gamma_{p}:=\Gamma_{-}-\Gamma_{+}$ with
\[
\Gamma_{\pm}\colon\R\to\C,\;\Gamma_{\pm}(t):=t\pm ip,
\]
Further, we choose $m\in\N$ such that $1/m<\min_{1\leq j\leq k}r_{j}<1/n$ and $m>p$ 
where $r=(r_{j})_{1\leq j\leq k}$ is from the path $\gamma_{K,n,r}$ in the definition of $H$. 
Due to this choice $\Gamma_{\pm}$ and $\gamma_{K,n,r}$ are within $S_{m}(K)$.

Let $f=[F]\in\mathcal{OV}(\overline{\C}\setminus K,E)/\mathcal{OV}(\overline{\C},E)$ and 
$z=x+iy\in S_{p}(K)$.
Let $u\in\R$, $u\neq x$, and $[t_{0},t_{1}]\subset[-p,p]$ such that the path $\gamma_{u}\colon[t_{0},t_{1}]\to \C$, 
$\gamma_{u}(t):=u+it$, is within $S_{m}(K)$. The map $\zeta\mapsto F(\zeta)\frac{G(z)(\zeta)}{z-\zeta}$ 
is holomorphic on $\C\setminus\{z\}$ with values in $E$ and like in \prettyref{prop:pettis-integral} a) and b) we deduce that 
it is Pettis-integrable along $\gamma_{u}$ and ${\Gamma_{\pm}}_{\mid [s_{0},s_{1}]}$ with $[s_{0},s_{1}]\subset\R$ 
using \cite[Proposition 2, p.\ 354]{Bonet2002} and the Mackey-Arens theorem.
Then we have by $(CT.5)$
\begin{align*}
\bigl|\langle e',\int_{\gamma_{u}}F(\zeta)\frac{G(z)(\zeta)}{\zeta-z}\d\zeta\rangle\bigr|
&\leq|e'\circ F|_{m}\int^{t_{1}}_{t_{0}}|G(z)(u+it)|\nu_{m}(u+it)^{-1}\frac{1}{|z-u-it|}\d t\\
&\leq|e'\circ F|_{m}(t_{1}-t_{0})\sup_{\zeta\in S_{m}(K)}|G(z)(\zeta)|\nu_{m}(\zeta)^{-1}\frac{1}{|x-u|}\\
&\underset{\mathclap{|u|\to\infty}}{\to} 0
\end{align*}
for all $e'\in E'$.
Hence we derive from Cauchy's integral formula that
\[
\langle e',F(z)\rangle=\frac{1}{2\pi i}\int_{\Gamma_{p}-\gamma_{K,n,r}}\langle e',F(\zeta)\frac{G(z)(\zeta)}{\zeta-z}\rangle\d\zeta
=-\frac{1}{2\pi i}\int_{\Gamma_{p}-\gamma_{K,n,r}}\langle e',F(\zeta)\frac{G(z)(\zeta)}{z-\zeta}\rangle\d\zeta
\]
for all $e'\in E'$ and $z\in S_{p}(K)$. Thus we have 
\[
F(z)=-\frac{1}{2\pi i}\int_{\Gamma_{p}-\gamma_{K,n,r}}F(\zeta)\frac{G(z)(\zeta)}{z-\zeta}\d\zeta
\]
for all $z\in S_{p}(K)$.
By (the proof) of \prettyref{prop:duality_backward} the function 
$\widetilde{g}(z,\cdot)=\frac{G(z)}{z-\cdot}\in\mathcal{OV}^{-1}_{\operatorname{ind}}(K)$ for all 
$z\in\C\setminus K$ and
\[
W\colon\C\setminus K\to E,\; W(z):=\frac{1}{2\pi i} H([F])(\widetilde{g}(z,\cdot))-F(z),
\]
is an element of $\mathcal{OV}(\overline{\C}\setminus K,E)$
since $T:=H([F])\in L(\mathcal{OV}^{-1}_{\operatorname{ind}}(K),E)$ by \prettyref{prop:duality_forward}. 
It follows that
\begin{align}\label{satz2.2.8}
W(z)
&= \frac{1}{2\pi i}\int_{\gamma_{K,n,r}}F(\zeta)\widetilde{g}(z,\zeta)\d\zeta
  +\frac{1}{2\pi i}\int_{\Gamma_{p}-\gamma_{K,n,r}}F(\zeta)\frac{G(z)(\zeta)}{z-\zeta}\d\zeta\notag\\
&=\frac{1}{2\pi i}\int_{\Gamma_{p}}F(\zeta)\widetilde{g}(z,\zeta)\d\zeta
=\frac{1}{2\pi i}\int_{\Gamma_{p}}F(\zeta)\frac{g(z-\zeta)}{z-\zeta}\d\zeta=:W_{p}(z)
\end{align}
for all $z\in S_{p}(K)$. 
But the right-hand side $W_{p}$ of \eqref{satz2.2.8}, as a function in $z$, is weakly holomorphic on 
$S_{p}(\varnothing)=\{z\in\C\;|\;|\im(z)|<p\}$, which follows from
\[
 \langle e',W_{p}(z)\rangle
=\langle e',\frac{1}{2\pi i}\int_{\Gamma_{p}}F(\zeta)\frac{g(z-\zeta)}{z-\zeta}\d\zeta\rangle \\
=\frac{1}{2\pi i}\int_{\Gamma_{p}}e'(F(\zeta))\frac{g(z-\zeta)}{z-\zeta}\d\zeta,\quad e'\in E',
\]
and differentiation under the integral sign. The weak holomorphy and the local completeness of $E$ imply 
that $W_{p}$ is holomorphic on $S_{p}(\varnothing)$ 
by \cite[Corollary 2, p.\ 404]{grosse-erdmann2004}. 
Thus $W$ is extended by $W_{p}$ to a function in $\mathcal{O}(\C,E)$ and
the extensions for each $p\in\N$ coincide because of the identity theorem. We denote this extension by $W$ as well.

For $l\in\N$ we choose $p:=\mathcal{J}_{4}(l)\geq 2$ and $m:=\widetilde{\mathcal{J}}_{4}(p)>p$ 
from condition $(CT.4)$. 
Then we have for $z=x+iy\in\overline{S_{1/l}}\subset S_{p}(\varnothing)$ 
\begin{flalign*}
&\hspace{0.37cm} 2\pi |\langle e',W(z)\rangle|\\
&=2\pi |\langle e',W_{p}(z)\rangle|
 =\bigl|\int_{\Gamma_{p}}e'(F(\zeta))\frac{g(z-\zeta)}{z-\zeta}\d\zeta\bigr|\\
&\leq \int_{-\infty}^{\infty}|e'(F(t-ip))|\frac{|g(z-(t-ip))|}{|z-(t-ip)|}\d t
+\int_{-\infty}^{\infty}|e'(F(t+ip))|\frac{|g(z-(t+ip))|}{|z-(t+ip)|}\d t\\
&\leq \bigl(\frac{1}{|y+p|}+\frac{1}{|y-p|}\bigr)
 \max\int_{-\infty}^{\infty}\frac{|g(z-(t\pm ip))|}{\nu_{m}(z-(t\pm ip))}\d t\,|e'\circ F|_{K,m}\\
&\leq 2\frac{1}{p-\frac{1}{l}}\max\int_{-\infty}^{\infty}\frac{|g(z-(t\pm ip))|}{\nu_{m}(z-(t\pm ip))}\d t\,
      |e'\circ F|_{K,m}
\end{flalign*}
and by $(CT.4)$ there exists $C>0$ such that for all $\alpha\in\mathfrak{A}$
\begin{align*}
 \sup_{z\in\overline{S_{1/l}}}p_{\alpha}(W(z))\nu_{l}(z)
=\sup_{e'\in B_{\alpha}^{\circ}}\sup_{\substack{0\leq |y|\leq\frac{1}{l}\\x\in\mathbb{R}}}|\langle e',W(x+iy)\rangle|\nu_{l}(x+iy)
&\leq \frac{C}{\pi(p-\frac{1}{l})}|F|_{K,m,\alpha},
\end{align*}
yielding to
\[
 |W|_{\varnothing,l,\alpha}
=\sup_{z\in S_{l}(\varnothing)}p_{\alpha}(W(z))\nu_{l}(z) 
\leq\max\bigl(|W|_{K,l,\alpha},\sup_{z\in \overline{S_{1/l}}}p_{\alpha}(W(z))\nu_{l}(z)\bigr)
<\infty .
\]
Hence $W\in\mathcal{OV}(\overline{\C},E)$ and thus
\[
(\Theta\circ H)(f)=\bigl[z\mapsto \frac{1}{2\pi i} H([F])(\widetilde{g}(z,\cdot))-F(z)\bigr]+f=[W]+f=f,
\]
i.e.\ $H$ is injective.

Second, we prove that $H\circ\Theta=\id$ on $L(\mathcal{OV}^{-1}_{\operatorname{ind}}(K),E)$, 
which implies the surjectivity of $H$.
Due to the Hahn-Banach theorem this is equivalent to the condition that
\[
e'((H\circ\Theta)(T)(\varphi))=e'(T(\varphi))
\]
holds for all $T\in L(\mathcal{OV}^{-1}_{\operatorname{ind}}(K),E)$, $\varphi\in\mathcal{OV}^{-1}_{\operatorname{ind}}(K)$ 
and $e'\in E'$.
Since
\begin{flalign*}
&\hspace{0.37cm} e'((H\circ\Theta)(T)(\varphi))\\
&=e'\bigl(H([z\mapsto \frac{1}{2\pi i} \langle T, \widetilde{g}(z,\cdot)\rangle])(\varphi)\bigr)
 =e'\bigl(\frac{1}{2\pi i}\int_{\gamma_{K,n,r}}\langle T,\widetilde{g}(z,\cdot)\rangle\varphi(z)\d z\bigr)\\
&=\frac{1}{2\pi i}\int_{\gamma_{K,n,r}}\langle e'\circ T,\widetilde{g}(z,\cdot)\rangle\varphi(z)\d z
 =(H\circ\Theta)(e'\circ T)(\varphi)
\end{flalign*}
all $T\in L(\mathcal{OV}^{-1}_{\operatorname{ind}}(K),E)$, $\varphi\in\mathcal{O}\nu_{n}^{-1}(\overline{U_{n}(K)})$, $n\in\N$, 
and $e'\in E'$, it suffices to show the result for $E=\C$.

As the span of the set of point evaluations of complex derivatives $\{\delta_{x_{0}}^{(n)}\;|\; x_{0}\in K\cap\R,\,n\in\N_{0}\}$
is dense in $\mathcal{OV}^{-1}_{\operatorname{ind}}(K)_{b}'$ by \prettyref{prop:DFS} b), we only need to show that
$(H\circ\Theta)(\delta_{x_{0}}^{(n)})(\varphi)=\langle\delta_{x_{0}}^{(n)},\varphi\rangle$ for all 
$x_{0}\in K\cap\R$, $n\in\N_{0}$ and $\varphi\in\mathcal{OV}^{-1}_{\operatorname{ind}}(K)$.
Let $x_{0}\in K\cap\R$ and $n\in\N_{0}$. Now, we have
\begin{equation}\label{satz2.2.9a}
 (H\circ\Theta)(\delta_{x_{0}}^{(n)})(\varphi)
=\frac{1}{2\pi i}\int_{\gamma_{K,k,r}}\langle\delta_{x_{0}}^{(n)},\widetilde{g}(z,\cdot)\rangle\varphi(z)\d z
\end{equation}
for all $\varphi\in\mathcal{O}\nu_{k}^{-1}(\overline{U_{k}(K)})$, $k\in\N$.
Let us take a closer look at the integral on the right-hand side of \eqref{satz2.2.9a}.
Let $m\in\N$, $m\geq 2$. 
Then $\widetilde{g}(z,\cdot)=\frac{g_{K}(z-\cdot)}{z-\cdot}\in\mathcal{O}(\mathbb{B}_{1/m}(x_{0}))$ 
for every $z\in S_{m}(\{x_{0}\})$. We fix the notation $\widetilde{g}_{z}(\zeta):=\widetilde{g}(z,\zeta)$ 
for $z\in S_{m}(\{x_{0}\})$ and $\zeta\in\mathbb{B}_{1/m}(x_{0})$. 
We set $l:=\mathcal{J}_{3}(m)>m$ with $\mathcal{J}_{3}(m)$ from condition $(CT.3)$. 
Then we get by Cauchy's inequality
\begin{align*}
     |\widetilde{g}_{z}^{\,(n)}(x_{0})|
&\leq n!l^{n}\max_{\zeta\in\partial\mathbb{B}_{1/l}(x_{0})}|\widetilde{g}(z,\zeta)|\\
&\leq n!l^{n}\frac{l-m}{ml}\max_{\zeta\in\partial\mathbb{B}_{1/l}(x_{0})}|g_{K}(z-\zeta)|
\end{align*}
for every $z\in S_{m}(\{x_{0}\})$. We deduce from $(CT.3)$ that with $G_{K}(z)=g_{K}(z-\cdot)$
\begin{flalign*}
&\hspace{0.37cm}\sup_{z\in S_{m}(\{x_{0}\})}|\widetilde{g}_{z}^{\,(n)}(x_{0})|\nu_{m}(z)\\
&\leq n!l^{n}\frac{l-m}{ml}\sup_{z\in S_{m}(\{x_{0}\})}
      \max_{\zeta\in\partial\mathbb{B}_{1/l}(x_{0})}|G_{K}(z)(\zeta)|\nu_{m}(z)\\
&\leq n!l^{n}\frac{l-m}{ml}\sup_{z\in S_{m}(\{x_{0}\})}\max_{\zeta\in\partial\mathbb{B}_{1/l}(x_{0})}
 |G_{K}(z)(\zeta)|\nu_{l}(\zeta)^{-1}\nu_{l}(\zeta)\nu_{m}(z)\\
&\leq n!l^{n}\frac{l-m}{ml}\|\nu_{l}\|_{\partial\mathbb{B}_{1/l}(x_{0})}
 \sup_{z\in S_{m}(\{x_{0}\})}\|G_{K}(z)\|_{\{x_{0}\},\mathcal{J}_{3}(m)}\nu_{m}(z)<\infty,
\end{flalign*}
implying $(z\mapsto \langle\delta_{x_{0}}^{(n)},\widetilde{g}(z,\cdot)\rangle)
\in\mathcal{OV}(\overline{\C}\setminus\{x_{0}\})$.
This means that the path of the integral on the right-hand side of \eqref{satz2.2.9a} can be deformed 
using Cauchy's integral theorem 
in combination with condition $(qV_{\infty})$ (like in \prettyref{prop:pettis-integral} a) and b)) and we get 
with $s:=\min_{j} r_{j}>0$ for $r=(r_{j})$
\begin{align*}
  \frac{1}{2\pi i}\int_{\gamma_{K,k,r}}\langle\delta_{x_{0}}^{(n)},\widetilde{g}(z,\cdot)\rangle\varphi(z)\d z
&=\frac{1}{2\pi i}\int_{\partial\mathbb{B}_{s}(x_{0})}\langle\delta_{x_{0}}^{(n)},\widetilde{g}(z,\cdot)\rangle\varphi(z)\d z\\
&=\frac{1}{2\pi i}\int_{\partial\mathbb{B}_{s}(x_{0})}\widetilde{g}_{z}^{\,(n)}(x_{0})\varphi(z)\d z
\end{align*}
for all $\varphi\in\mathcal{O}\nu_{k}^{-1}(\overline{U_{k}(K)})$. 
Since $g\in\mathcal{O}(\C)$, $g(0)=1$, there is a sequence $(a_{j})_{j\in\N_{0}}$ in $\C$ such that $a_{0}=g(0)=1$ and
\[
g(z-\zeta)=\sum_{j=0}^{\infty}a_{j}(z-\zeta)^{j},\quad z,\zeta\in\C.
\]
Thus the Laurent series of $\widetilde{g}(z,\cdot)=\frac{g(z-\cdot)}{z-\cdot}$ in $\zeta\neq z$ is
\[
\widetilde{g}(z,\zeta)=\frac{1}{z-\zeta}+\sum^{\infty}_{j=1}a_{j}(z-\zeta)^{j-1}
\]
and so
\[
\widetilde{g}_{z}^{\,(n)}(x_{0})=\frac{n!}{(z-x_{0})^{n+1}}+h(z,x_{0})
\]
with an entire function $h(\cdot,x_{0})$. By Cauchy's integral theorem and Cauchy's integral formula for derivatives we have
\begin{align*}
  (H\circ\Theta)(\delta_{x_{0}}^{(n)})(\varphi)
&=\frac{1}{2\pi i}\int_{\partial\mathbb{B}_{s}(x_{0})}{\widetilde{g}_{z}^{\,(n)}(x_{0})\varphi(z)\d z}\\
&=\frac{1}{2\pi i}\int_{\partial\mathbb{B}_{s}(x_{0})}\bigl(\frac{n!}{(z-x_{0})^{n +1}}+h(z,x_{0}) \bigr)\varphi(z)\d z\\
&=\frac{n!}{2\pi i}\int_{\partial\mathbb{B}_{s}(x_{0})}\frac{\varphi(z)}{(z-x_{0})^{n+1}}\d z
 =\varphi^{(n)}(x_{0})=\langle\delta_{x_{0}}^{(n)},\varphi\rangle
\end{align*}
for all $\varphi\in\mathcal{O}\nu_{k}^{-1}(\overline{U_{k}(K)})$, $k\in\N$.
\end{proof}

If $K\cap\{\pm\infty\}$ has isolated points in $K$, e.g.\ $K=\{+\infty\}$, 
then we cannot apply the preceding theorem directly 
since a counterpart for \prettyref{prop:DFS} b) is missing. However, we can make use of the relation \eqref{unabh.H} 
if $\mathcal{OV}^{-1}_{\operatorname{ind}}(\overline{\R})$ is dense in $\mathcal{OV}^{-1}_{\operatorname{ind}}(K)$.

\begin{cor}\label{cor:duality}
Let $K\subset\overline{\R}$ be a non-empty compact set and $\mathcal{V}:=(\nu_{n})_{n\in\N}$ a directed family 
of continuous weights on $\C$ which fulfils $(qV_{\infty})$ and $(qL^{1})$ for $K$ and $\overline{\R}$ as well as 
$(CT.1)$ and $(CT.2)$ for $K$, and $(CT)$ for $\overline{\R}$ with 
$g_{K}=g_{\overline{\R}}$. 
If $E$ is a sequentially complete locally convex Hausdorff space and 
$\mathcal{OV}^{-1}_{\operatorname{ind}}(\overline{\R})$ dense in $\mathcal{OV}^{-1}_{\operatorname{ind}}(K)$,
then the map 
\[
H_{K}\colon \mathcal{OV}(\overline{\C}\setminus K,E)/\mathcal{OV}(\overline{\C},E) 
\to L_{b}(\mathcal{OV}^{-1}_{\operatorname{ind}}(K),E)
\]
is a topological isomorphism with inverse $\Theta_{K}$ and 
\begin{equation}\label{unabh.1}
\Theta_{K}(T)=\Theta_{\overline{\R}}(T),\quad T\in L(\mathcal{OV}^{-1}_{\operatorname{ind}}(K), E).
\end{equation}
\end{cor}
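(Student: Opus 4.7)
The plan is to deduce the corollary from \prettyref{thm:duality} applied to $\overline{\R}$, using the density hypothesis to transfer bijectivity from the $\overline{\R}$-side to the $K$-side. First I would verify that \prettyref{thm:duality}(ii) applies with $\overline{\R}$ in place of $K$: the conditions \prettyref{cond:DFS} and \prettyref{cond:duality} are assumed for $\overline{\R}$, and $\overline{\R}\cap\{\pm\infty\}=\{\pm\infty\}$ has no isolated points in $\overline{\R}$ since every neighbourhood of $\pm\infty$ in $\overline{\R}$ contains an unbounded real interval. Consequently $H_{\overline{\R}}$ is a topological isomorphism with inverse $\Theta_{\overline{\R}}$.

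Next I would introduce the two canonical restriction maps
\[
 \iota\colon\mathcal{OV}^{-1}_{\operatorname{ind}}(\overline{\R})\to\mathcal{OV}^{-1}_{\operatorname{ind}}(K)
 \quad\text{and}\quad
 R\colon\mathcal{OV}(\overline{\C}\setminus K,E)/\mathcal{OV}(\overline{\C},E)\to\mathcal{OV}(\overline{\C}\setminus\overline{\R},E)/\mathcal{OV}(\overline{\C},E),
\]
coming from $U_{n}(K)\subset U_{n}(\overline{\R})$ and $\C\setminus\overline{\R}\subset\C\setminus K$, and check that both are well-defined and continuous. The adjoint $\iota^{\ast}\colon T\mapsto T\circ\iota$, mapping $L_{b}(\mathcal{OV}^{-1}_{\operatorname{ind}}(K),E)\to L_{b}(\mathcal{OV}^{-1}_{\operatorname{ind}}(\overline{\R}),E)$, is injective precisely by the density hypothesis. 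The map $R$ is injective by the identity theorem: when $K\subsetneq\overline{\R}$ the set $\C\setminus K$ is connected (as $K\cap\R$ is a proper closed subset of $\R$), so any $F\in\mathcal{OV}(\overline{\C}\setminus K,E)$ whose restriction to $\C\setminus\overline{\R}$ agrees with some $G\in\mathcal{OV}(\overline{\C},E)$ must satisfy $F=G_{\mid\C\setminus K}$; the degenerate case $K=\overline{\R}$ is trivial because then $R$ is the identity.

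The key technical step is then the compatibility of both transforms with these restrictions,
\[
 \iota^{\ast}\circ H_{K}=H_{\overline{\R}}\circ R
 \qquad\text{and}\qquad
 R\circ\Theta_{K}=\Theta_{\overline{\R}}\circ\iota^{\ast}.
\]
The first is a direct reading of \eqref{unabh.H} in the present setting (with $K_{1}=K$ and $K$ replaced by $\overline{\R}$). The second follows from $g_{K}=g_{\overline{\R}}$ together with the fact that $\iota$ acts by restriction, so $\iota(g_{\overline{\R}}(z-\cdot)/(z-\cdot))=g_{K}(z-\cdot)/(z-\cdot)$ for every $z\in\C\setminus\overline{\R}$, and both sides of the identity produce the same representative on $\C\setminus\overline{\R}$. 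This second compatibility is nothing but \eqref{unabh.1}.

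Finally, the bijectivity of $H_{K}$ with inverse $\Theta_{K}$ falls out as a diagram chase. Applying $\iota^{\ast}$ to $H_{K}(\Theta_{K}(T))$ and using the two compatibilities together with $H_{\overline{\R}}\circ\Theta_{\overline{\R}}=\id$ yields $\iota^{\ast}(H_{K}\Theta_{K}T)=\iota^{\ast}T$, so injectivity of $\iota^{\ast}$ gives $H_{K}\circ\Theta_{K}=\id$. Dually, $R(\Theta_{K}H_{K}f)=\Theta_{\overline{\R}}H_{\overline{\R}}(Rf)=Rf$ combined with the injectivity of $R$ gives $\Theta_{K}\circ H_{K}=\id$. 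Continuity of $H_{K}$ and $\Theta_{K}$ is already in \prettyref{prop:duality_forward} and \prettyref{prop:duality_backward}, so $H_{K}$ is a topological isomorphism. The main subtlety I anticipate is the careful bookkeeping of the four restriction maps and the verification of the second compatibility; once that is in place, the rest is essentially a diagram chase.
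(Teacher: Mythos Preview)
Your proof is correct and follows essentially the same approach as the paper's: apply \prettyref{thm:duality} to $\overline{\R}$, use the compatibilities $\iota^{\ast}\circ H_{K}=H_{\overline{\R}}\circ R$ (this is \eqref{unabh.H}) and $R\circ\Theta_{K}=\Theta_{\overline{\R}}\circ\iota^{\ast}$ (this is \eqref{unabh.1}, coming from $g_{K}=g_{\overline{\R}}$), and conclude by injectivity of $\iota^{\ast}$ and $R$. The paper's write-up is more terse, tacitly identifying the $K$-quotient with its image under $R$ and writing the chains $(\Theta_{K}\circ H_{K})(f)=\Theta_{\overline{\R}}(H_{K}(f))=\Theta_{\overline{\R}}(H_{\overline{\R}}(f))=f$ and $(H_{K}\circ\Theta_{K})(T)=H_{\overline{\R}}(\Theta_{\overline{\R}}(T))=T$ directly; your explicit verification that $R$ is injective (via connectedness of $\C\setminus K$ and the identity theorem) makes rigorous an identification the paper leaves implicit.
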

\begin{proof}
$H_{K}$ and $\Theta_{K}$ are well-defined, linear and continuous maps by \prettyref{prop:duality_forward} 
and \prettyref{prop:duality_backward}. 
$H_{\overline{\R}}$ is a topological isomorphism with inverse $\Theta_{\overline{\R}}$ 
by \prettyref{thm:duality} (ii). 
The embedding of $\mathcal{OV}^{-1}_{\operatorname{ind}}(\overline{\R})$ into 
$\mathcal{OV}^{-1}_{\operatorname{ind}}(K)$ 
is continuous and dense, hence defines the embedding of $L(\mathcal{OV}^{-1}_{\operatorname{ind}}(K),E)$ into
$L(\mathcal{OV}^{-1}_{\operatorname{ind}}(\overline{\R}),E)$ (the density of the first embedding implies 
the injectivity of the latter one) and we have
\[
\Theta_{K}(T)=\Theta_{\overline{\R}}(T),\quad T\in L(\mathcal{OV}^{-1}_{\operatorname{ind}}(K),E),
\]
since $g_{\overline{\R}}=g_{K}$. Furthermore, it follows from \eqref{unabh.H} that 
\[
{H_{\overline{\R}}}_{\mid\mathcal{OV}(\overline{\C}\setminus K,E)/\mathcal{OV}(\overline{\C},E)}=H_{K}
\]
on $\mathcal{OV}^{-1}_{\operatorname{ind}}(\overline{\R})$. 
We conclude for every $f\in\mathcal{OV}(\overline{\C}\setminus K,E)/\mathcal{OV}(\overline{\C},E)$ that
\[
 (\Theta_{K}\circ H_{K})(f)=\Theta_{\overline{\R}}(H_{K}(f))=\Theta_{\overline{\R}}(H_{\overline{\R}}(f))=f
\]
and for every $T\in L(\mathcal{OV}^{-1}_{\operatorname{ind}}(K),E)$ that 
\[
 (H_{K}\circ\Theta_{K})(T)=H_{\overline{\R}}(\Theta_{K}(T))=H_{\overline{\R}}(\Theta_{\overline{\R}}(T))=T
\]
by \prettyref{thm:duality}. Thus $H_{K}$ is bijective and $\Theta_{K}$ its inverse.
\end{proof}

\begin{rem}\label{rem:Hausdorff_quotient}
Under the conditions of \prettyref{thm:duality} resp.\ \prettyref{cor:duality} it follows that 
$\mathcal{OV}(\overline{\C}\setminus K,E)/\mathcal{OV}(\overline{\C},E)$ 
is Hausdorff since $E$ and thus $L_{b}(\mathcal{OV}^{-1}_{\operatorname{ind}}(K),E)$ is Hausdorff. 
In particular, $\mathcal{OV}(\overline{\C},E)$ is closed in $\mathcal{OV}(\overline{\C}\setminus K,E)$ 
by \cite[Lemma 22.9, p.\ 254]{meisevogt1997}.
\end{rem}

\begin{cor}\label{cor:duality_example}
Let $E$ be a locally convex Hausdorff space, $K\subset\overline{\R}$ a non-empty compact set, 
$(a_{n})_{n\in\N}$ strictly increasing, 
$a_{n}<0$ for all $n\in\N$ or $a_{n}\geq 0$ for all $n\in\N$
and $\mathcal{V}:=(\exp(a_{n}\mu))_{n\in\N}$ where 
\[
 \mu\colon\C \to [0,\infty),\;\mu(z):=|\re(z)|^{\gamma},
\]
for some $0<\gamma\leq 1$. If 
\begin{enumerate}
 \item [(i)] $K\subset\R$ and $E$ is locally complete, or
 \item [(ii)] $K\cap\{\pm\infty\}$ has no isolated points in $K$ and $E$ is sequentially complete, or 
 \item [(iii)] $K$ is arbitrary, $a_{n}<0$ for all $n\in\N$, $\lim_{n\to\infty}a_{n}=0$, $\gamma=1$ and $E$ sequentially complete,
\end{enumerate}
then the map 
\[
H_{K}\colon \mathcal{OV}(\overline{\C}\setminus K,E)/\mathcal{OV}(\overline{\C},E) 
\to L_{b}(\mathcal{OV}^{-1}_{\operatorname{ind}}(K),E)
\]
is a topological isomorphism with inverse $\Theta_{K}$.
\end{cor}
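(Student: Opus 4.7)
The plan is to reduce the corollary to \prettyref{thm:duality} (for cases (i) and (ii)) and to \prettyref{cor:duality} (for case (iii)), with the fundamental-solution choice $g_{K}(z):=\exp(-z^{2})$ suggested by the paragraph preceding \prettyref{prop:duality_forward}. I will verify \prettyref{cond:DFS} and \prettyref{cond:duality} for this $g_{K}$ and the family $\mathcal{V}=(\exp(a_{n}\mu))_{n\in\N}$ with $\mu(z)=|\re(z)|^{\gamma}$ on any non-empty compact $K\subset\overline{\R}$, and then handle the density required in case (iii) separately.

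First I would check \prettyref{cond:DFS}: clause (ii) is immediate since $\nu_{n}(z)=\nu_{n}(|\re(z)|)$ by definition, and the integrability $\nu_{n}/\nu_{\mathcal{I}_{2}(n)}\in L^{1}([0,\infty))$ with $\mathcal{I}_{2}(n)=n+1$ follows from $a_{n}-a_{\mathcal{I}_{2}(n)}<0$ (strict monotonicity) and $\gamma>0$, because $\int_{0}^{\infty}\exp((a_{n}-a_{\mathcal{I}_{2}(n)})t^{\gamma})\d t<\infty$. For clause (i), with $\mathcal{I}_{1}(n):=n+1$, the ratio $\nu_{n}(z)/\nu_{\mathcal{I}_{1}(n)}(z)=\exp((a_{n}-a_{\mathcal{I}_{1}(n)})|\re(z)|^{\gamma})$ tends to zero as $|\re(z)|\to\infty$, so any $\varepsilon>0$ confines the complement of the desired compact $Q$ to a vertical strip $|\re(z)|\geq R$, whose intersection with $\overline{U_{\mathcal{I}_{1}(n)}(K)}$ (which is contained in $\R+i[-1/\mathcal{I}_{1}(n),1/\mathcal{I}_{1}(n)]$ off any bounded set) yields the required estimate.

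Next I would verify \prettyref{cond:duality}. The guiding identity is $|g_{K}(w)|=\exp((\im w)^{2}-(\re w)^{2})$, so on a horizontal strip $|\im w|\leq M$ the factor $|g_{K}(z-\zeta)|$ is essentially Gaussian in $\re(z-\zeta)$. Clause (i) is the differentiability of $z\mapsto g_{K}(z-\cdot)$ as a map into $\mathcal{O}\nu_{n}^{-1}(\overline{U_{n}(K)})$, which is obtained by the usual difference-quotient estimate and the entirety of $g_{K}$. For clauses (ii), (iii), (v), the key analytic step is the sub-additivity $|a+b|^{\gamma}\leq|a|^{\gamma}+|b|^{\gamma}$ valid for $0<\gamma\leq1$; applying it in the form $|\re\zeta|^{\gamma}\leq|\re(z-\zeta)|^{\gamma}+|\re z|^{\gamma}$ (or its mirror image, depending on the sign of $a_{n}$) absorbs all non-Gaussian factors into a term $\exp((a_{n}-a_{\mathcal{J}(n)})|\re z|^{\gamma})\leq 1$ plus a bounded correction, while the Gaussian $\exp(-(\re(z-\zeta))^{2})$ dominates anything of order $|u|^{\gamma}$ with $\gamma\leq 1$. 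For clause (iv), the substitution $u=\re(z)-t$ reduces the integral to $\int_{\R}\exp(-u^{2}+a_{n}|\re(z)|^{\gamma}-a_{m}|\re(z)-u|^{\gamma})\d u$ (up to a $z$-uniform constant coming from $|\im(z)\mp p|\leq p+1/n$); bounding the exponent as above gives the needed uniform integrability.

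Cases (i) and (ii) of the corollary now follow directly from \prettyref{thm:duality}. For case (iii) the hypotheses of \prettyref{thm:duality} may fail (isolated points of $K$ at $\pm\infty$ are allowed), so I would apply \prettyref{cor:duality} with the common choice $g_{K}=g_{\overline{\R}}=\exp(-z^{2})$; the same verifications carry over to $\overline{\R}$ in place of $K$. The remaining ingredient, and the genuine obstacle, is the density of $\mathcal{OV}^{-1}_{\operatorname{ind}}(\overline{\R})$ in $\mathcal{OV}^{-1}_{\operatorname{ind}}(K)$. In the Fourier-hyperfunction setting $\gamma=1$, $a_{n}\nearrow 0$ this is the classical density of the space of slowly increasing entire functions in the space of germs of slowly increasing holomorphic functions near any compact $K\subset\overline{\R}$; it is implicit in the references \cite{Kawai, J, ich} quoted in the text immediately after \prettyref{prop:DFS}, and can alternatively be produced by a Runge-type approximation exploiting the exponential decay built into $\mathcal{V}$ together with the fact that the relevant sets $U_{n}(\overline{\R})\setminus U_{n}(K)$ have only finitely many components by \prettyref{rem:fin_many_comp+path}(a). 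Once this density is in hand, \prettyref{cor:duality} applies and concludes case (iii).
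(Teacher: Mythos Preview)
Your proposal is correct and follows essentially the same route as the paper: reduce to \prettyref{thm:duality} in cases (i)--(ii) and to \prettyref{cor:duality} in case (iii), take $g_{K}(z)=g_{\overline{\R}}(z)=\exp(-z^{2})$, verify \prettyref{cond:DFS} and \prettyref{cond:duality} via the Gaussian decay of $g_{K}$ combined with the sub-additivity $|a+b|^{\gamma}\leq|a|^{\gamma}+|b|^{\gamma}$, and invoke the density of $\mathcal{OV}^{-1}_{\operatorname{ind}}(\overline{\R})$ in $\mathcal{OV}^{-1}_{\operatorname{ind}}(K)$ from the Fourier-hyperfunction literature. The only cosmetic differences are the choices $\mathcal{I}_{j}(n)=n+1$ versus the paper's $\mathcal{I}_{j}(n)=2n$ (immaterial) and that the paper pins down the density reference precisely to \cite[Theorem 2.2.1, p.\ 474]{Kawai}, whereas you gesture at the same circle of references; your alternative Runge-type suggestion is not pursued in the paper.
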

\begin{proof}
We only need to prove that the conditions of \prettyref{thm:duality} in $(i)$-$(ii)$ 
resp.\ \prettyref{cor:duality} in $(iii)$ 
are fulfilled. For $(iii)$, i.e.\ $K\subset\overline{\R}$ is a non-empty compact set, 
$a_{n}<0$ for all $n\in\N$, $\lim_{n\to\infty}a_{n}=0$ and $\gamma=1$, 
we remark that $\mathcal{OV}^{-1}_{\operatorname{ind}}(\overline{\R})$ is dense in 
$\mathcal{OV}^{-1}_{\operatorname{ind}}(K)$ by \cite[Theorem 2.2.1, p.\ 474]{Kawai} 
and its correction in \cite[Remark, p.\ 247-248]{saburi1985}
(where $\mathcal{OV}^{-1}_{\operatorname{ind}}(\overline{\R})$ is called $\mathscr{P}_{*}$).
Let $M\in\{K,\overline{\R}\}$, $g_{M}\colon\C\to\C$, $g_{M}(z):=\exp(-z^{2})$, and $n\in\N$.

$(qV_{\infty})$: The choices $\mathfrak{I}_{1}(n):=2n$ and 
  \[
   Q:=\overline{U_{\mathfrak{I}_{1}(n)}(M)}\cap\{z\in\C\;|\; |\re(z)|\leq \max(0,\ln(\varepsilon)/(a_{n}-a_{2n}))^{1/\gamma}+n\},
  \]
guarantee that this condition is fulfilled.

$(qL^{1})$: Obviously, $\mu(z)=\mu(|\re(z)|)$ for all $z\in\C$ and with $\mathfrak{I}_{2}(n):=2n$ we have 
\[
 \int_{0}^{\infty}\bigl|\frac{e^{a_{n}\mu(x)}}{e^{a_{\mathfrak{I}_{2}(n)}\mu(x)}}\bigr|\d x
 =\int_{0}^{\infty}e^{(a_{n}-a_{2n})x^{\gamma}}\d x
 =\frac{\Gamma(1/\gamma)}{\gamma |a_{n}-a_{2n}|^{1/\gamma}}
\]
where $\Gamma$ is the gamma function, implying that condition $(qL^{1})$ is satisfied. 
        
$(CT.1)$: Next, we prove that $G_{M}'(z_{0})=-2(z_{0}-\cdot)G_{M}(z_{0})$ for all $z_{0}\in\C$. 
We remark that for all $z=z_{1}+iz_{2}\in\C$
\begin{align*}
   \|G_{M}(z)\|_{M,n}
 &=\sup_{\zeta\in\overline{U_{n}(M)}}e^{-\re((z-\zeta)^{2})}e^{-a_{n}|\re(\zeta)|^{\gamma}}\\
 &\leq \sup_{\zeta_{1}+i\zeta_{2}\in\overline{U_{n}(M)}}e^{-(z_{1}-\zeta_{1})^{2}+(z_{2}-\zeta_{2})^{2}}e^{|a_{n}|(1+|\zeta_{1}|)}\\
 &\leq e^{(|z_{2}|+1/n)^{2}-z_{1}^{2}+|a_{n}|}
       \sup_{\zeta_{1}+i\zeta_{2}\in\overline{U_{n}(M)}}e^{-\zeta_{1}^{2}+(2|z_{1}|+|a_{n}|)|\zeta_{1}|}\\
 &\leq e^{(|z_{2}|+1/n)^{2}-z_{1}^{2}+|a_{n}|}e^{-(|z_{1}|+|a_{n}|/2)^{2}+(2|z_{1}|+|a_{n}|)(|z_{1}|+|a_{n}|/2)}
\end{align*}
and we deduce that $G_{M}(z)\in\mathcal{O}\nu_{n}^{-1}(\overline{U_{n}(M)})$. 
For $\zeta=\zeta_{1}+i\zeta_{2}\in\overline{U_{n}(M)}$, $z_{0}=z_{1}+iz_{2}\in\C$ and $h\in\C$, $0<|h|\leq 1$, we observe that 
\begin{flalign*}
&\hspace{0.37cm} \bigl|\frac{G_{M}(z_{0}+h)(\zeta)-G_{M}(z_{0})(\zeta)}{h}-(-2(z_{0}-\zeta)G_{M}(z_{0})(\zeta))\bigr|
  e^{-a_{n}|\re(\zeta)|^{\gamma}}\\
&=\bigl|\bigl(\frac{e^{-2(z_{0}-\zeta)h-h^{2}}}{h}-\frac{1}{h}+2(z_{0}-\zeta)\bigr)G_{M}(z_{0})(\zeta)\bigr|
  e^{-a_{n}|\re(\zeta)|^{\gamma}}\\
&=\bigl|\bigl(-h+h\sum_{k=2}^{\infty}\frac{1}{k!}(-2(z_{0}-\zeta)-h)^{k}h^{k-2}\bigr)G_{M}(z_{0})(\zeta)\bigr|
  e^{-a_{n}|\re(\zeta)|^{\gamma}}\\
&\leq |h|\bigl(1+\sum_{k=2}^{\infty}\frac{1}{k!}(2|z_{0}-\zeta|+1)^{k}\bigr)|G_{M}(z_{0})(\zeta)|e^{-a_{n}|\re(\zeta)|^{\gamma}}\\
&\leq |h|e^{2|z_{0}-\zeta|+1}e^{-\re((z_{0}-\zeta)^{2})}e^{-a_{n}|\re(\zeta)|^{\gamma}}\\
&\leq |h|e^{2|z_{0}|+2|\zeta_{2}|+2|\zeta_{1}|+1}e^{-(z_{1}-\zeta_{1})^{2}+(z_{2}-\zeta_{2})^{2}}e^{-a_{n}|\zeta_{1}|^{\gamma}}\\
&\leq |h|e^{2|z_{0}|+(2/n)+1-z_{1}^{2}+(|z_{2}|+1/n)^{2}+|a_{n}|}e^{-\zeta_{1}^{2}+(2|z_{1}|+2+|a_{n}|)|\zeta_{1}|}\\
&\leq |h|e^{2|z_{0}|+(2/n)+1-z_{1}^{2}+(|z_{2}|+1/n)^{2}+|a_{n}|}
      e^{-(|z_{1}|+1+|a_{n}|/2)^{2}+(2|z_{1}|+2+|a_{n}|)(|z_{1}|+1+|a_{n}|/2)}=:|h|C_{0},
\end{flalign*}
yielding to 
\[
 \bigl\|\frac{G_{M}(z_{0}+h)-G_{M}(z_{0})}{h}-(-2(z_{0}-\cdot))G_{M}(z_{0})\bigr\|_{M,n}
 \leq |h|C_{0}\underset{h\to 0}{\to} 0.
\]
We conclude that $-2(z_{0}-\cdot)G_{M}(z_{0})\in\mathcal{O}\nu_{n}^{-1}(\overline{U_{n}(M)})$ 
(inequality above and triangle inequality) and $(CT.1)$ holds. 

$(CT.2)$, $(CT.3)$: Let $N\subset M$ be a non-empty compact set. 
We choose $\mathcal{J}_{2}(n):=\mathcal{J}_{3}(n):=2n$ and note that for $\zeta_{1},z_{1}\in\R$
\[
-a_{2n}|\zeta_{1}|^{\gamma}+a_{n}|z_{1}|^{\gamma}\leq |a_{2n}||z_{1}-\zeta_{1}|^{\gamma}\leq |a_{2n}|(1+|z_{1}-\zeta_{1}|).
\]
It follows that
\begin{align*}
  |G_{M}(z)|_{N,n,2n}
&=\sup_{z\in S_{n}(N)}\sup_{\zeta\in\overline{U_{2n}(N)}}e^{-\re((z-\zeta)^{2})}
   e^{-a_{2n}|\re(\zeta)|^{\gamma}}e^{a_{n}|\re(z)|^{\gamma}}\\
&\leq e^{(n+1/(2n))^{2}}\sup_{z_{1}\in\R}\sup_{\zeta_{1}\in\R}e^{-(z_{1}-\zeta_{1})^{2}
                                                                 -a_{2n}|\zeta_{1}|^{\gamma}+a_{n}|z_{1}|^{\gamma}}\\
&\leq e^{(n+1/(2n))^{2}+|a_{2n}|}\sup_{z_{1}\in\R}\sup_{\zeta_{1}\in\R}e^{-(z_{1}-\zeta_{1})^{2}+|a_{2n}||z_{1}-\zeta_{1}|}\\
&\leq e^{(n+1/(2n))^{2}+|a_{2n}|}\sup_{x\in\R}e^{-x^{2}+|a_{2n}|x}\\
&=    e^{(n+1/(2n))^{2}+|a_{2n}|}e^{-(a_{2n}/2)^{2}+a_{2n}^{2}/2},
\end{align*}
which means that $(CT.2)$ and $(CT.3)$ hold.

$(CT.4)$: We set $p:=\mathcal{J}_{4}(n):=2n$ and $m:=\widetilde{\mathcal{J}}_{4}(p):=2p=4n$. 
Then $2\leq p<m$ and for all $z=z_{1}+iz_{2}\in \overline{S_{1/n}}=\{w\in\C\;|\;|\im(w)|\leq (1/n)\}$
\begin{flalign*}
&\hspace{0.37cm}\int_{-\infty}^{\infty}\frac{|g_{M}(z-(t\pm ip))|}{\nu_{m}(t\pm ip)}\nu_{n}(z)\d t\\
&=\int_{-\infty}^{\infty}e^{-\re((z-(t\pm ip))^{2})}e^{a_{n}|z_{1}|^{\gamma}-a_{m}|t|^{\gamma}}\d t
 \leq e^{(z_{2}\mp p)^{2}}\int_{-\infty}^{\infty}e^{-(z_{1}-t)^{2}}e^{|a_{m}||z_{1}-t|^{\gamma}}\d t\\
&\leq e^{((1/n)+p)^{2}+|a_{m}|}\int_{-\infty}^{\infty}e^{-(z_{1}-t)^{2}}e^{|a_{m}||z_{1}-t|}\d t
 =2e^{((1/n)+p)^{2}+|a_{m}|+a_{m}^{2}/4}\int_{-|a_{m}|/2}^{\infty}e^{-t^{2}}\d t\\
&\leq 2\sqrt{\pi}e^{((1/n)+p)^{2}+|a_{m}|+a_{m}^{2}/4},
\end{flalign*}
yielding $(CT.4)$.

$(CT.5)$: For all $z\in\C\setminus M$ it holds that
\begin{align*}
  \sup_{\zeta\in S_{n}(M)}|G_{M}(z)(\zeta)|e^{-a_{n}|\re(\zeta)|^{\gamma}}
&=\sup_{\zeta_{1}+i\zeta_{2}\in S_{n}(M)}e^{-(z_{1}-\zeta_{1})^{2}+(z_{2}-\zeta_{2})^{2}}e^{-a_{n}|\zeta_{1}|^{\gamma}}\\
&\leq e^{(|z_{2}|+n)^{2}-z_{1}^{2}+|a_{n}|}\sup_{\zeta_{1}\in\R}e^{-\zeta_{1}^{2}+(2|z_{1}|+|a_{n}|)\zeta_{1}}\\
&=    e^{(|z_{2}|+n)^{2}-z_{1}^{2}+|a_{n}|}e^{-(|z_{1}|+|a_{n}|/2)^{2}+(2|z_{1}|+|a_{n}|)(|z_{1}|+|a_{n}|/2)}.
\end{align*}
Thus $(CT.5)$ is satisfied.
\end{proof}

The isomorphy $\mathcal{OV}(\overline{\C}\setminus K,E)/\mathcal{OV}(\overline{\C},E) 
\cong L_{b}(\mathcal{OV}^{-1}_{\operatorname{ind}}(K),E)$ 
in \prettyref{cor:duality_example} (iii) is already known for special cases like $E=\C$ 
\cite[Theorem 3.2.1, p.\ 480]{Kawai} and Fr\'echet spaces $E$ \cite[3.9 Satz, p.\ 41]{J} 
but the proof is of homological nature. 
In the special case $K=[a,\infty]$, $a\in\R$, and $E=\C$ the duality in 
\prettyref{cor:duality_example} (iii) was proved in
\cite[Theorem 3.3, p.\ 85-86]{Mori2} and served as an initial point to prove 
\prettyref{cor:duality_example} (iii) for complete $E$ in
\cite[4.1 Theorem, p.\ 41]{ich}.
\section{$(\Omega)$ for $\mathcal{OV}$-spaces on strips with holes}
In this section we derive sufficient conditions such that $\mathcal{OV}(\overline{\C}\setminus K)$ 
satisfies $(\Omega)$ for a non-empty compact set $K\subset\overline{\R}$. 
The basic idea is to prove that, under suitable conditions, the strong dual 
$\mathcal{OV}^{-1}_{\operatorname{ind}}(K)_{b}'$ satisifies $(\Omega)$, 
then we use the duality 
$\mathcal{OV}(\overline{\C}\setminus K)/\mathcal{OV}(\overline{\C})
\cong\mathcal{OV}^{-1}_{\operatorname{ind}}(K)_{b}'$ 
from the preceding section to obtain $(\Omega)$ for $\mathcal{OV}(\overline{\C}\setminus K)$ 
if $\mathcal{OV}(\overline{\C})$ satisfies $(\Omega)$. 
Let us recall that a Fr\'echet space $F$ with an increasing fundamental system of 
seminorms $(\vertiii{\cdot}_{k})_{k\in\N}$ satisfies $(\Omega)$ by \cite[Chap.\ 29, Definition, p.\ 367]{meisevogt1997} if
\[
\forall\; p\in\N\; \exists\; q\in\N\;\forall\; k\in\N\;\exists\; n\in\N,\,C>0\;\forall\; r>0:\;
 U_{q}\subset Cr^n U_k + \frac{1}{r} U_p
\]
where $U_{k}:=\{x\in F \; | \; \vertiii{x}_{k}\leq 1\}$. 

We start with the following helpful observation 
concerning the inductive limit $\mathcal{OV}^{-1}_{\operatorname{ind}}(K)$, 
namely, that the choice of the sequence $(1/n)_{n\in\N}$ for the neighbourhoods $U_{n}(K)=U_{1/(1/n)}(K)$ is irrelevant.

\begin{rem}\label{rem:DFS_sequence}
Let $K\subset \overline{\R}$ be a non-empty compact set, $\mathcal{V}:=(\nu_{n})_{n\in\N}$ a directed family 
of continuous weights on $\C$, $(c_{n})_{n\in\N}$ a strictly decreasing sequence in $\R$ with $c_{n}\leq 1$ for all $n\in\N$ 
and $\lim_{n\to\infty}c_{n}=0$. For $n\in\N$ let
\[
  \mathcal{O}\nu_{n}^{-1}(\overline{U_{1/c_{n}}(K)})
:=\{f\in\mathcal{O}(U_{1/c_{n}}(K))\cap\mathcal{C}(\overline{U_{1/c_{n}}(K)})\;|\; \|f\|_{n,1/c_{n}} < \infty \}
\]
where 
\[
\|f\|_{n,1/c_{n}}:=\sup_{z \in \overline{U_{1/c_{n}}(K)}}|f(z)|\nu_{n}(z)^{-1} 
\]
and the spectral maps for $n,k\in\N$, $n\leq k$, be given by the restrictions
\[
\widetilde{\pi}_{n,k}\colon \mathcal{O}\nu_{n}^{-1}(\overline{U_{1/c_{n}}(K)})\to\mathcal{O}\nu_{k}^{-1}(\overline{U_{1/c_{k}}(K)}), \;
\widetilde{\pi}_{n,k}(f):=f_{\mid U_{1/c_{k}}(K)}.
\]
If $\mathcal{V}$ fulfils $(qV_{\infty})$, then 
\[
 \mathcal{OV}^{-1}_{\operatorname{ind}}(K)
\cong\lim_{\substack{\longrightarrow\\n\in \N}}\mathcal{O}\nu_{n}^{-1}(\overline{U_{1/c_{n}}(K)}).
\]
\end{rem}
\begin{proof}
Follows directly from \prettyref{prop:DFS} a) and \cite[4.2 Satz, p.\ 122]{F/W/Buch}.
\end{proof}

We recall an equivalent description of the property $(\Omega)$.
By \cite[Lemma 29.13, p.\ 369]{meisevogt1997} a Fr\'echet space $F$ with an increasing 
fundamental system of seminorms $(\vertiii{\cdot}_{k})_{k\in\N}$ satisfies $(\Omega)$ if and only if 
\begin{equation}\label{om2}
\forall\; p\in\N\; \exists\; q\in\N\;\forall\; k\in\N\;\exists\;0<\theta<1,\,C>0\;\forall\; y\in F':\;
\|y\|^{\ast}_{q}\leq C {\|y\|^{\ast}_{p}}^{1-\theta} {\|y\|^{\ast}_{k}}^{\theta}
\end{equation}
holds where
\[
\|y\|^{\ast}_{k}:=\sup\{|y(x)|\;|\;\vertiii{x}_{k}\leq 1\}\in\R\cup\{\infty\}
\]
is the dual norm. We introduce the following condition which we need for an application of 
\emph{Hadamard's Three Circles Theorem}.
 
\begin{condH3CT}\label{cond:dual_Omega}
Let $K\subset \overline{\R}$ be a non-empty compact set with $K\cap\{\pm\infty\}\neq\varnothing$ 
and $\mathcal{V}:=(\nu_{n})_{n\in\N}$ a directed family of continuous weights on $\C$.
Let there be a strictly decreasing sequence $(c_{n})_{n\in\N}$ in $\R$ with $c_{n}\leq 1$ for all $n\in\N$ 
and $\lim_{n\to\infty}c_{n}=0$ such that
\begin{align*}
&\forall\; p,q,k\in\N,\,p<q<k\;\exists\;C>0\;\forall\;\zeta\in\R,\,|\zeta|\geq 1+c_{k}^{-1}:\\
&\bigl(\sup_{z\in\C,\,|z-\zeta|\leq c_{k}}\nu_{k}(z)\bigr)^{\theta}\bigl(\sup_{z\in\C,\,|z-\zeta|\leq c_{p}}\nu_{p}(z)\bigr)^{1-\theta}
\leq C\inf_{z\in\C,\,|z-\zeta|\leq c_{q}}\nu_{q}(z)\;\;\text{with}\;\;\theta:=\frac{\ln(c_{p}/c_{q})}{\ln(c_{p}/c_{k})}.
\end{align*}
\end{condH3CT}

We note that $0<\theta<1$ and state the following improvement of \cite[5.21 Lemma, p.\ 88]{ich}.

\begin{lem}\label{lem:dual_Omega}
Let $K\subset\overline{\R}$ be non-empty compact set. If condition $(qV_{\infty})$ and, in addition, $(H3CT)$
if $K\cap\{\pm\infty\}\neq\varnothing$ are fulfilled, then the following holds.
\begin{enumerate}
	\item [a)] 
	\begin{align*}
	 \forall\; p,q,k\in\N,\,p<q<k\;\exists\;&0<\theta<1,\,C>0\;\forall\;f\in\mathcal{O}\nu_{p}^{-1}(\overline{U_{1/c_{p}}(K)}):\\
	 &\|f\|_{q,c_{q}}\leq C\|f\|^{1-\theta}_{p,c_{p}}\|f\|^{\theta}_{k,c_{k}}
	\end{align*}
	with $c_{n}$ from $(H3CT)$ if $K\cap\{\pm\infty\}\neq\varnothing$ 
	resp.\ $c_{n}:=1/n$, $n\in\N$, if $K\subset\R$.
	\item [b)] $\mathcal{OV}^{-1}_{\operatorname{ind}}(K)_{b}'$ satisfies $(\Omega)$.
\end{enumerate}
\end{lem}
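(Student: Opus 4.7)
My plan for (a) is to reduce to Hadamard's three-circles theorem applied to disks centred at real points. For fixed $p<q<k$ and an arbitrary $z_0\in\overline{U_{1/c_q}(K)}$ I would select $\zeta_0\in\R$ so that $|z_0-\zeta_0|\leq c_q$ and the concentric closed disks of radii $c_k<c_q<c_p$ about $\zeta_0$ sit inside $\overline{U_{1/c_k}(K)}\subset\overline{U_{1/c_q}(K)}\subset\overline{U_{1/c_p}(K)}$. If $z_0$ is within distance $c_q$ of $K\cap\R$, I choose such a $\zeta_0\in K\cap\R$ and all three disk-containments are immediate from the definition of $U_{1/c_n}(K)$. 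In the remaining case ($K\cap\{\pm\infty\}\neq\varnothing$ and $z_0$ lies in an unbounded strip component of $U_{1/c_q}(K)$), I set $\zeta_0:=\re(z_0)\in\R$; using $c_n\leq 1$ one checks that $|\zeta_0|\geq 1+c_k^{-1}$ is enough to place the three disks inside the respective infinity-strip parts of $\overline{U_{1/c_n}(K)}$, and this inequality is automatic whenever $z_0$ lies sufficiently far out in the strip.

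With $\zeta_0$ chosen, $f\in\mathcal{O}\nu_p^{-1}(\overline{U_{1/c_p}(K)})$ is holomorphic on the open disk $|z-\zeta_0|<c_p$ and continuous up to the boundary, so Hadamard's three-circles theorem yields
\[
\max_{|z-\zeta_0|\leq c_q}|f(z)|\leq\bigl(\max_{|z-\zeta_0|\leq c_p}|f(z)|\bigr)^{1-\theta}\bigl(\max_{|z-\zeta_0|\leq c_k}|f(z)|\bigr)^{\theta}
\]
with $\theta=\ln(c_p/c_q)/\ln(c_p/c_k)\in(0,1)$, matching the exponents in \prettyref{cond:dual_Omega}. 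Bounding each of the two maxima on the right by $\bigl(\sup_{|z-\zeta_0|\leq c_n}\nu_n(z)\bigr)\|f\|_{n,c_n}$ ($n=p$ resp.\ $n=k$) and $\nu_q(z_0)$ from below by $\inf_{|z-\zeta_0|\leq c_q}\nu_q(z)$ leads to
\[
|f(z_0)|\nu_q(z_0)^{-1}\leq \frac{\bigl(\sup_{|z-\zeta_0|\leq c_p}\nu_p(z)\bigr)^{1-\theta}\bigl(\sup_{|z-\zeta_0|\leq c_k}\nu_k(z)\bigr)^{\theta}}{\inf_{|z-\zeta_0|\leq c_q}\nu_q(z)}\,\|f\|_{p,c_p}^{1-\theta}\|f\|_{k,c_k}^{\theta}.
\]
In the strip case the weight quotient is bounded by a constant via \prettyref{cond:dual_Omega}, and in the compact case (always so when $K\subset\R$) by the continuity and strict positivity of $\nu_p,\nu_q,\nu_k$ on the relevant compact subset of $\overline{U_{1/c_p}(K)}$. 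Taking the supremum over $z_0$ yields (a).

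For (b) I would invoke the dual characterisation \eqref{om2} of $(\Omega)$. By \prettyref{prop:DFS}\,a) and \prettyref{rem:DFS_sequence} the space $X:=\mathcal{OV}^{-1}_{\operatorname{ind}}(K)$ is a (DFS)-space, and the unit balls $B_n$ of the Banach spaces $\mathcal{O}\nu_n^{-1}(\overline{U_{1/c_n}(K)})$ form a fundamental system of bounded sets by \cite[Proposition 25.19, p.\ 303]{meisevogt1997}. Hence $F:=X_b'$ is a Fr\'{e}chet space whose seminorms are $\|y\|_n:=\sup_{f\in B_n}|y(f)|$, and by the reflexivity of $X$ together with the bipolar theorem the dual norms on $F'=X$ coincide with the Banach norms $\|f\|_{n,c_n}$ on $\mathcal{O}\nu_n^{-1}(\overline{U_{1/c_n}(K)})$ (and equal $+\infty$ elsewhere). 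Part (a) is then precisely the inequality \eqref{om2} for $F$, i.e.\ $F$ has $(\Omega)$. The main obstacle lies in the geometric step of (a) in the unbounded case: arranging $\zeta_0$ with all three disk-containments, $|z_0-\zeta_0|\leq c_q$, and $|\zeta_0|\geq 1+c_k^{-1}$ simultaneously, and patching the resulting strip estimate to the trivial compact piece of $\overline{U_{1/c_q}(K)}$.
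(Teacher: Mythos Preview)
Your treatment of part (b) coincides with the paper's: reflexivity of the (DFS)-space identifies the dual norms on $F'=\mathcal{OV}^{-1}_{\operatorname{ind}}(K)$ with the Banach norms $\|\cdot\|_{n,c_n}$ (the paper cites \cite[Proposition 22.14, p.\ 256]{meisevogt1997} for this), and then (a) is read as the dual characterisation \eqref{om2}, with the cases $k\leq q$ being trivial.

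For part (a) with $K\subset\R$ your argument is correct and in fact more economical than the paper's. Centring the three concentric disks at a nearest point $\zeta_0\in K$ gives a single $\theta=\ln(c_p/c_q)/\ln(c_p/c_k)$ for every $z_0$, and the weight quotient is bounded uniformly in $\zeta_0$ by continuity and positivity of the $\nu_n$ on the compact set $\overline{U_{1/c_p}(K)}$. The paper instead maps each bounded component $Z_p$ of $U_{1/c_p}(K)$ conformally to the unit disk via the Riemann mapping theorem and Carath\'eodory's boundary extension, applies Hadamard there, and obtains a component-dependent $\theta$; a separate closing step then replaces all of these by their minimum using a monotonicity remark. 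Your disk approach bypasses that machinery entirely.

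The gap is in the unbounded case, and it is precisely where you put your finger. The remaining ``compact piece'' of $\overline{U_{1/c_q}(K)}$ is \emph{not} trivial. Take for instance $K=\{\infty\}$: for $z_0$ with $1/c_q\leq\re(z_0)<1+c_k^{-1}$ there is no $\zeta_0$ satisfying all three disk containments together with $|z_0-\zeta_0|\leq c_q$, and a crude bound of the form $\sup_{Q}|f|\nu_q^{-1}\leq C\|f\|_{p,c_p}$ on this set does \emph{not} imply the interpolation inequality, because $\|f\|_{p,c_p}^{1-\theta}\|f\|_{k,c_k}^{\theta}$ can be arbitrarily small relative to $\|f\|_{p,c_p}$. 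A genuine three-circles-type estimate is needed on that piece too. The paper supplies it (case (iii) of its proof) by truncating the unbounded component to the bounded Jordan domain $\widetilde{Z}_p:=Z_p\cap((-\infty,1+c_k^{-1})+i\R)$, conformally mapping $\widetilde{Z}_p$ to the unit disk, choosing $\zeta\in\widetilde{Z}_q\cap(K\cup\{x\in\R:x>c_k^{-1}\})$ to go to the origin, and running Hadamard there exactly as in the bounded-component case. This yields its own $\theta$ and constant, and the final minimum-$\theta$ step merges it with the strip estimate (ii) and the bounded components (i). That conformal-mapping device for the transition region is the missing ingredient in your proposal.
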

\begin{proof}
$a)$ Let $p,q,k\in\N$, $p<q<k$, and $f\in\mathcal{O}\nu_{p}^{-1}(\overline{U_{1/c_{p}}(K)})$. 
Considering the components of $U_{1/c_{p}}(K)$ we have to distinguish three different cases.

$(i)$ Let $Z_{p}$ be a bounded component of $U_{1/c_{p}}(K)$. 
By \prettyref{rem:fin_many_comp+path} a) there are only finitely many components 
$Z_{q}$ of $U_{1/c_{q}}(K)$ with $Z_{q}\subset Z_{p}$. 
For every such component $Z_{q}$ we choose $\zeta\in Z_{q}\cap K$, which exists since $Z_{q}$ is bounded. 
Let $Z_{k}$ be the (unique) component of $U_{1/c_{k}}(K)$ which contains $\zeta$. 
$Z_{p}$ is a proper simply connected subset of $\C$. Thus there exists a biholomorphic map 
$\widetilde{\psi}\colon Z_{p}\to \mathbb{B}_{1}(0)$ with $\widetilde{\psi}(\zeta)=0$ due to the Riemann mapping theorem 
(and M\"obius transformation). In addition, $Z_{p}$ and $\mathbb{B}_{1}(0)$ are Jordan domains 
(for the definition see \cite[2.8.5 Lemma, p.\ 193, 1.8.5 Jordan Curve Theorem, p.\ 68]{B/G/Buch}) 
and so there exists a homeomorphism $\psi\colon\overline{Z_{p}}\to \overline{\mathbb{B}_{1}(0)}$ 
such that $\psi_{\mid Z_{p}}=\widetilde{\psi}$ by \cite[2.8.8 Theorem (Caratheodory), p.\ 195]{B/G/Buch}. 
Since $\psi(\overline{Z_{q}})\subset\psi(Z_{p})=\mathbb{B}_{1}(0)$ and $\psi(\overline{Z_{q}})$ is compact, 
as $\overline{Z_{q}}$ is compact and $\psi$ continuous, there is $0<r_{q}<1$ such that 
$\psi(\overline{Z_{q}})\subset\overline{\mathbb{B}_{r_{q}}(0)}$. 
Moreover, there exists $0<r_{k}<r_{q}$ such that $\overline{\mathbb{B}_{r_{k}}(0)}\subset\psi(Z_{k})$ 
since $0\in\psi(Z_{k})$, $\psi(Z_{k})$ is open by the open mapping theorem (from complex analysis) 
and $\psi(Z_{k})\subset\psi(Z_{q})$. 
The function $u:=f\circ(\psi^{-1})$ is holomorphic on $\mathbb{B}_{1}(0)$ and continuous on $\overline{\mathbb{B}_{1}(0)}$, 
in particular, $|u|$ is subharmonic on $\mathbb{B}_{1}(0)$ and continuous on $\overline{\mathbb{B}_{1}(0)}$. 
Setting 
\[
M(r):=\sup_{|z|=r}|u(z)|,\quad 0<r\leq 1,
\]
we obtain by virtue of \cite[4.4.32 Proposition (Hadamard's Three Circles Theorem), p.\ 338]{B/G/Buch}
\[
\ln(M(r_{q}))\leq \frac{\ln(1/r_{q})}{\ln(1/r_{k})}\ln(M(r_{k}))+ \frac{\ln(r_{q}/r_{k})}{\ln(1/r_{k})}\ln(M(1))
\]
and hence
\[
M(r_{q})\leq M(r_{k})^{\theta}M(1)^{1-\theta}
\]
with $\theta:=\frac{\ln(1/r_{q})}{\ln(1/r_{k})}$. We note that $0<\theta<1$ because $0<r_{k}<r_{q}<1$. 
By the maximum principle we have
\begin{align*}
  M(r_{q})
&=\sup_{|z|\leq r_{q}}|u(z)|
  \geq\inf_{|z|\leq r_{q}}\nu_{q}(\psi^{-1}(z))\sup_{|z|\leq r_{q}}|f(\psi^{-1}(z))|\nu_{q}(\psi^{-1}(z))^{-1}\\
&\underset{\mathclap{\psi(\overline{Z_{q}})\subset \overline{\mathbb{B}_{r_{q}}(0)}}}{\geq}\qquad\;
 \underbrace{\inf_{|z|\leq r_{q}}\nu_{q}(\psi^{-1}(z))}_{=:C_{0}>0}
 \sup_{z\in \overline{Z_{q}}}|f(z)|\nu_{q}(z)^{-1}
\end{align*}
as well as
\begin{flalign*}
&\hspace{0.37cm}M(r_{k})^{\theta}M(1)^{1-\theta}\\
&=\sup_{|z|\leq r_{k}}|u(z)|^{\theta}\sup_{|z|\leq 1}|u(z)|^{1-\theta}\\
&\leq\phantom{\cdot}\bigl(\sup_{|z|\leq r_{k}}\nu_{k}(\psi^{-1}(z))\bigr)^{\theta}
  \bigl(\sup_{|z|\leq r_{k}}|f(\psi^{-1}(z))|\nu_{k}(\psi^{-1}(z))^{-1}\bigr)^{\theta}\\
&\phantom{\leq}
 \cdot\bigl(\sup_{|z|\leq 1}\nu_{p}(\psi^{-1}(z))\bigr)^{1-\theta}
 \bigl(\sup_{|z|\leq 1}|f(\psi^{-1}(z))|\nu_{p}(\psi^{-1}(z))^{-1}\bigr)^{1-\theta}\\
&\underset{\mathclap{\overline{\mathbb{B}_{r_{k}}(0)}\subset\psi(\overline{Z_{k}})}}{\leq}\qquad 
 \underbrace{\bigl(\sup_{|z|\leq r_{k}}\nu_{k}(\psi^{-1}(z))\bigr)^{\theta}
 \bigl(\sup_{|z|\leq 1}\nu_{p}(\psi^{-1}(z))\bigr)^{1-\theta}}_{=:C_{1}}\\
&\phantom{\leq}\cdot
 \bigl(\sup_{z\in\overline{Z_{k}}}|f(z)|\nu_{k}(z)^{-1}\bigr)^{\theta}
 \bigl(\sup_{z\in \overline{Z_{p}}}|f(z)|\nu_{p}(z)^{-1}\bigr)^{1-\theta}
\end{flalign*}
and therefore
\begin{align}\label{thm13.0.0.1}
     \sup_{z\in\overline{Z_{q}}}|f(z)|\nu_{q}(z)^{-1}
&\leq\frac{C_{1}}{C_{0}}\bigl(\sup_{z\in \overline{Z_{k}}}|f(z)|\nu_{k}(z)^{-1}\bigr)^{\theta}
 \bigl(\sup_{z\in \overline{Z_{p}}}|f(z)|\nu_{p}(z)^{-1}\bigr)^{1-\theta}\nonumber\\
&\leq\frac{C_{1}}{C_{0}}\|f\|^{\theta}_{k,c_{k}}\|f\|^{1-\theta}_{p,c_{p}}.
\end{align}

$(ii)$ Let $K\cap\{\pm\infty\}\neq\varnothing$. Let $Z_{p}$ be an unbounded component of $U_{1/c_{p}}(K)$, 
w.l.o.g.\ the real part of $Z_{p}$ is bounded from below and unbounded from above. Let $\zeta\in\R$ such that $\zeta\geq 1+c_{k}^{-1}$. 
Then we have $\overline{\mathbb{B}_{c_{j}}(\zeta)}\subset([c_{j}^{-1},\infty)+i[-c_{j},c_{j}])$ for $j\in\{p,q,k\}$ since 
$c_{p}^{-1}<c_{q}^{-1}<c_{k}^{-1}$ and $c_{j}\leq 1$.
Applying Hadamard's Three Circles Theorem to $u:=|f|$, 
we get $M(c_{q})\leq M(c_{k})^{\theta}M(c_{p})^{1-\theta}$ with $\theta:=\frac{\ln(c_{p}/c_{q})}{\ln(c_{p}/c_{k})}$ 
fulfilling $0<\theta<1$.
Like in $(i)$ we obtain
\[
 M(c_{q})\geq\inf_{|z-\zeta|\leq c_{q}}\nu_{q}(z)\sup_{|z-\zeta|\leq c_{q}}|f(z)|\nu_{q}(z)^{-1}
\]
and
\begin{align*}
     M(c_{k})^{\theta}M(c_{p})^{1-\theta}
&\leq\bigl(\sup_{|z-\zeta|\leq c_{k}}\nu_{k}(z)\bigr)^{\theta}\bigl(\sup_{|z-\zeta|\leq c_{p}}\nu_{p}(z)\bigr)^{1-\theta}\\
&\quad \bigl(\sup_{|z-\zeta|\leq c_{k}}|f(z)|\nu_{k}(z)^{-1}\bigr)^{\theta}\bigl(\sup_{|z-\zeta|\leq c_{p}}|f(z)|\nu_{p}(z)^{-1}\bigr)^{1-\theta}.
\end{align*}
Due to condition $(H3CT)$ there is $C_{2}>0$, independent of $\zeta$, such that
\[
     \sup_{|z-\zeta|\leq c_{q}}|f(z)|\nu_{q}(z)^{-1}
\leq C_{2} \bigl(\sup_{|z-\zeta|\leq c_{k}}|f(z)|\nu_{k}(z)^{-1}\bigr)^{\theta}\bigl(\sup_{|z-\zeta|\leq c_{p}}|f(z)|\nu_{p}(z)^{-1}\bigr)^{1-\theta}
\]
and thus
\begin{flalign}\label{thm13.0.0.5}
&\hspace{0.37cm}\sup_{\substack{z\in\C \\ \d^{|\cdot|}(\{z\},[1+c_{k}^{-1},\infty))\leq c_{q}}}|f(z)|\nu_{q}(z)^{-1}
=\sup_{\substack{\zeta\in\R \\ \zeta\geq 1+c_{k}^{-1}}}\sup_{|z-\zeta|\leq c_{q}}|f(z)|\nu_{q}(z)^{-1}\nonumber\\
&\leq C_{2}\bigl(\sup_{\substack{z\in\C \\ \d^{|\cdot|}(\{z\},[1+c_{k}^{-1},\infty))\leq c_{k}}}|f(z)|\nu_{k}(z)^{-1}\bigr)^{\theta}
      \bigl(\sup_{\substack{z\in\C \\ \d^{|\cdot|}(\{z\},[1+c_{k}^{-1},\infty))\leq c_{p}}}|f(z)|\nu_{p}(z)^{-1}\bigr)^{1-\theta}\nonumber\\
&\leq C_{2}\|f\|^{\theta}_{k,c_{k}}\|f\|^{1-\theta}_{p,c_{p}}.
\end{flalign}

$(iii)$ Let $K\cap\{\pm\infty\}\neq\varnothing$ and $Z_{p}$ be w.l.o.g.\ like in $(ii)$. 
We define $\widetilde{Z}_{p}:=Z_{p}\cap((-\infty,1+c_{k}^{-1})+i\R)$. 
By \prettyref{rem:fin_many_comp+path} a) there are only finitely many components 
$\widetilde{Z}_{q}$ of $U_{1/c_{q}}(K)\cap((-\infty,1+c_{k}^{-1})+i\R)$ with $\widetilde{Z}_{q}\subset\widetilde{Z}_{p}$. 
For every such component $\widetilde{Z}_{q}$ we choose $\zeta\in\widetilde{Z}_{q}\cap(K\cup\{x\in\R\;|\;x>c_{k}^{-1}\})$. 
Let $\widetilde{Z}_{k}$ be the (unique) component of $U_{1/c_{k}}(K)\cap((-\infty,1+c_{k}^{-1})+i\R)$ which contains $\zeta$. 
The rest is analogous to $(i)$ and thus there are $\widetilde{C}_{0}$, $\widetilde{C}_{1}>0$ and $0<\theta<1$ such that
\begin{equation}\label{thm13.0.0.6}
\sup_{z\in\overline{\widetilde{Z}_{q}}}|f(z)|\nu_{q}(z)^{-1}
\leq \frac{\widetilde{C}_{1}}{\widetilde{C}_{0}}\|f\|^{\theta}_{k,c_{k}}\|f\|^{1-\theta}_{p,c_{p}}.
\end{equation}

$(iv)$ First, let us remark the following. Let $B$ be a set, $B_{0}\subset B$, $0<\theta_{0}<\theta_{1}<1$, 
$h\colon B_{0}\to [0,\infty)$, $g\colon B\to [0,\infty)$ and $h\leq g$ on $B_{0}$. Then
\[
    \bigl(\sup_{z\in B_{0}} h(z)\bigr)^{\theta_{1}}\bigl(\sup_{z\in B} g(z)\bigr)^{1-\theta_{1}}
\leq\bigl(\sup_{z\in B_{0}} h(z)\bigr)^{\theta_{0}}\bigl(\sup_{z\in B} g(z)\bigr)^{1-\theta_{0}}.
\]
Now, we take the minimum of all the $\theta$s which appear in $(i)$-$(iii)$. 
There are finitely many of them and denote their minimum again with $\theta$. 
Take the maximum of the constants $\frac{C_{1}}{C_{0}}$, $C_{2}$ and $\frac{\widetilde{C}_{1}}{\widetilde{C}_{0}}$ 
which appear in $(i)$-$(iii)$. 
There are again finitely many of them and denote their maximum with $C$. 
We apply the remark above to $B_{0}:=\overline{U_{1/c_{k}}(K)}$, $B:=\overline{U_{1/c_{p}}(K)}$, 
$h(z):=|f(z)|\nu_{k}(z)^{-1}$ and $g(z):=|f(z)|\nu_{p}(z)^{-1}$. Then we deduce from \eqref{thm13.0.0.1}, 
\eqref{thm13.0.0.5} and \eqref{thm13.0.0.6} that
\[
\|f\|_{q,c_{q}}\leq C\|f\|^{\theta}_{k,c_{k}}\|f\|^{1-\theta}_{p,c_{p}}.
\]

$b)$ We recall \prettyref{rem:DFS_sequence} and identify both inductive limits.
Let $p\in\N$ and choose $q\in\N$, $q>p$. Let $k\in\N$. If $k\leq p$, then we get for any $0<\theta<1$ and all 
$y\in(\mathcal{OV}^{-1}_{\operatorname{ind}}(K)_{b}')'$ by definition of the dual norm 
\[
\|y\|^{\ast}_{q,c_{q}}\underset{p<q}{\leq}\|y\|^{\ast}_{p,c_{p}}={\|y\|^{\ast}_{p,c_{p}}}^{1-\theta}{\|y\|^{\ast}_{p,c_{p}}}^{\theta}
\underset{k<p}{\leq}{\|y\|^{\ast}_{p,c_{p}}}^{1-\theta}{\|y\|^{\ast}_{k,c_{k}}}^{\theta}.
\]
Let $k>p$. If $k\leq q$, we have for any $0<\theta<1$ and 
all $y\in(\mathcal{OV}^{-1}_{\operatorname{ind}}(K)_{b}')'$ by definition of the dual norm 
\[
\|y\|^{\ast}_{q,c_{q}}\underset{k\leq q}{\leq}\|y\|^{\ast}_{k,c_{k}}={\|y\|^{\ast}_{k,c_{k}}}^{1-\theta}{\|y\|^{\ast}_{k,c_{k}}}^{\theta}
\underset{p<k}{\leq}{\|y\|^{\ast}_{p,c_{p}}}^{1-\theta}{\|y\|^{\ast}_{k,c_{k}}}^{\theta}.
\]
Let $k>q$ and $y\in(\mathcal{OV}^{-1}_{\operatorname{ind}}(K)_{b}')'$. 
If $\|y\|^{\ast}_{p,c_{p}}=\infty$, then \eqref{om2} is obviously fulfilled. 
Let $\|y\|^{\ast}_{p,c_{p}}<\infty$. As $\mathcal{OV}^{-1}_{\operatorname{ind}}(K)$ is a DFS-space by \prettyref{prop:DFS} a), 
the sets $B_{n}:=\{f\in\mathcal{O}\nu_{n}^{-1}(\overline{U_{1/c_{n}}(K)})\;|\;\|f\|_{n,c_{n}}\leq 1\}$, $n\in\N$, 
are a fundamental system of bounded sets of $\mathcal{OV}^{-1}_{\operatorname{ind}}(K)$ 
by \cite[Proposition 25.19, p.\ 303]{meisevogt1997} and hence the seminorms
\[
\vertiii{x}_{n}:=\sup_{f\in B_{n}}|x(f)|,\quad x\in\mathcal{OV}^{-1}_{\operatorname{ind}}(K)',
\]
form a fundamental system of seminorms of $\mathcal{OV}^{-1}_{\operatorname{ind}}(K)_{b}'$. 
Furthermore, $\mathcal{OV}^{-1}_{\operatorname{ind}}(K)$ is reflexive and thus there is a unique 
$f\in\mathcal{OV}^{-1}_{\operatorname{ind}}(K)$ such that $y(x)=x(f)$ for all $x\in\mathcal{OV}^{-1}_{\operatorname{ind}}(K)'$.
Then we obtain by \cite[Proposition 22.14, p.\ 256]{meisevogt1997} for all $n\in\N$, $n\geq p$,
\begin{align*}
\infty&>\|y\|^{\ast}_{p,c_{p}}\underset{p\leq n}{\geq}\|y\|^{\ast}_{n,c_{n}}
       =\sup\{|y(x)|\;|\;\vertiii{x}_{n}\leq  1\}
       =\sup\{|x(f)|\;|\;x\in B^{\circ}_{n}\}\\
      &=\inf\{t>0\;|\;f\in tB_{n}\}.
\end{align*}
In particular, this means that $\{t>0\;|\;f\in tB_{n}\}\neq\varnothing$ and 
thus we have $f\in\mathcal{O}\nu_{n}^{-1}(\overline{U_{1/c_{n}}(K)})$ as well as
\[
\|y\|^{\ast}_{n,c_{n}}=\inf\{t>0\;|\;f\in tB_{n}\}=\|f\|_{n,c_{n}}
\]
for all $n\geq p$. So by part $a)$, there are $C>0$ and $0<\theta<1$, only depending on $p$, $q$ and $k$, such that
\[
 \|y\|^{\ast}_{q,c_{q}}=\|f\|_{q,c_{q}}\leq C\|f\|^{1-\theta}_{p,c_{p}}\|f\|^{\theta}_{k,c_{k}}
=C{\|y\|^{\ast}_{p,c_{p}}}^{1-\theta}{\|y\|^{\ast}_{k,c_{k}}}^{\theta}.
\]
\end{proof}

The idea to use Hadamard's Three Circles Theorem in the proof of \prettyref{lem:dual_Omega} a) is taken from 
the proof of \cite[Lemma 5.2 (a)(3), p.\ 263-264]{Vogt1982}.
If $K\subset\R$ is non-empty and compact, \prettyref {lem:dual_Omega} b) is already known. 
Indeed, the space $\mathcal{O}(\C\setminus K)$ satisfies $(\Omega)$ by \cite[Proposition 2.5 (b), p.\ 173]{vogt1983} 
and thus the quotient space 
$\mathcal{O}(\C\setminus K)/\mathcal{O}(\C)$ as well by \cite[Lemma 29.11 (2), p.\ 368]{meisevogt1997}. 
Since $(\Omega)$ is a linear-topological invariant by \cite[Lemma 29.11 (1), p.\ 368]{meisevogt1997},
it follows from $\mathcal{OV}^{-1}_{\operatorname{ind}}(K)_{b}'\cong\mathscr{A}(K)_{b}'
\cong\mathcal{O}(\C\setminus K)/\mathcal{O}(\C)$ by \eqref{eq:duality_non_weighted} that 
$\mathcal{OV}^{-1}_{\operatorname{ind}}(K)_{b}'$ also satisfies $(\Omega)$.

\begin{thm}\label{thm:Omega_strips_with_holes}
Let $K\subset\overline{\R}$ be a non-empty compact set and $\mathcal{OV}(\overline{\C})$ satisfy $(\Omega)$.
Let $(qV_{\infty})$, $(qL^{1})$, $(CT.1)$ and $(CT.2)$ be fulfilled for $K$ and, in addition, 
$(H3CT)$ if $K\cap\{\pm\infty\}\neq\varnothing$. 
If 
\begin{enumerate}
\item [(i)] $K\subset\R$, or $K\cap\{\pm\infty\}$ has no isolated points in $K$ and $(CT.3)$-$(CT.5)$
is fulfilled for $K$, or 
\item [(ii)] $(qV_{\infty})$, $(qL^{1})$ and $(CT)$ are fulfilled for $\overline{\R}$ with $g_{K}=g_{\overline{\R}}$ 
and $\mathcal{OV}^{-1}_{\operatorname{ind}}(\overline{\R})$ is dense in $\mathcal{OV}^{-1}_{\operatorname{ind}}(K)$,
\end{enumerate}
then $\mathcal{OV}(\overline{\C}\setminus K)$ satisfies $(\Omega)$.
\end{thm}
\begin{proof}
The spaces $\mathcal{OV}(\overline{\C}\setminus K)$ and $\mathcal{OV}(\overline{\C})$ 
are Fr\'echet spaces which is easily checked (similar to \cite[3.7 Proposition, p.\ 240]{kruse2018_2}).
By \prettyref{thm:duality} in $(i)$ resp.\ \prettyref{cor:duality} in $(ii)$ 
$\mathcal{OV}(\overline{\C}\setminus K)/\mathcal{OV}(\overline{\C})$ 
is topologically isomorphic to $\mathcal{OV}^{-1}_{\operatorname{ind}}(K)_{b}'$, 
in particular, the quotient is a Fr\'echet space as $\mathcal{OV}^{-1}_{\operatorname{ind}}(K)$ is 
a DFS-space by \prettyref{prop:DFS} a).
Since $(\Omega)$ is a linear-topological invariant by \cite[Lemma 29.11 (1), p.\ 368]{meisevogt1997}, 
$\mathcal{OV}(\overline{\C}\setminus K)/\mathcal{OV}(\overline{\C})$ satisfies $(\Omega)$ 
due to \prettyref{lem:dual_Omega} b). The sequence
\[
0\to\mathcal{OV}(\overline{\C})\overset{i}{\to}\mathcal{OV}(\overline{\C}\setminus K)
\overset{q}{\to}\mathcal{OV}(\overline{\C}\setminus K)/\mathcal{OV}(\overline{\C})\to 0
\]
is an exact sequence of Fr\'echet spaces where $i$ means the inclusion and $q$ the quotient map. 
$\mathcal{OV}(\overline{\C})$ satisfies $(\Omega)$ by assumption
and $\mathcal{OV}(\overline{\C}\setminus K)/\mathcal{OV}(\overline{\C})$ as well, 
thus $\mathcal{OV}(\overline{\C}\setminus K)$ by \cite[1.7 Lemma, p.\ 230]{VW}, too.
\end{proof}

Conditions for $\mathcal{OV}(\overline{\C})$ to satisfy $(\Omega)$ can be found in 
\cite[Theorem 10, p.\ 14]{kruse2019_1}, \cite[Corollary 13, p.\ 17]{kruse2019_1} 
and more general in \cite[3.1 Proposition]{debrouwere2021}.
In particular, the preceding theorem generalises \cite[5.22 Theorem, p.\ 92]{ich} 
which is case $(ii)$ of the following corollary.

\begin{cor}\label{cor:Omega_strips_with_holes}
Let $K\subset\overline{\R}$ be a non-empty compact set, 
$(a_{n})_{n\in\N}$ strictly increasing, $a_{n}< 0$ for all $n\in\N$ or $a_{n}\geq 0$ for all $n\in\N$, 
$\lim_{n\to\infty}a_{n}=0$ or $\lim_{n\to\infty}a_{n}=\infty$
and $\mathcal{V}:=(\exp(a_{n}\mu))_{n\in\N}$ where 
\[
 \mu\colon\C \to [0,\infty),\;\mu(z):=|\re(z)|^{\gamma},
\]
for some $0<\gamma\leq 1$. If 
\begin{enumerate}
 \item [(i)] $K\subset\R$, or $K\cap\{\pm\infty\}$ has no isolated points in $K$, or
 \item [(ii)] $K$ is arbitrary, $a_{n}< 0$ for all $n\in\N$, $\lim_{n\to\infty}a_{n}=0$ and $\gamma=1$,
\end{enumerate}
then $\mathcal{OV}(\overline{\C}\setminus K)$ satisfies $(\Omega)$.
\end{cor}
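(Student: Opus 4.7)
The plan is to apply \prettyref{thm:Omega_strips_with_holes} to $\mathcal{V}=(\exp(a_n\mu))_{n\in\N}$ with $\mu(z)=|\re(z)|^\gamma$. Most of its hypotheses are already supplied by earlier results: $\mu$ is a strong weight generator because the subadditivity $(x+y)^\gamma\leq x^\gamma+y^\gamma$ for $0<\gamma\leq 1$ gives $\mu(x+n)\leq 2\mu(x)+n^\gamma$; \prettyref{cond:dense} with $I_{214}(n)\geq I_{14}(n+1)$ and $(\omega.1)$ come from the proof of \prettyref{cor:omega_for_strips}; and \prettyref{cond:DFS} and \prettyref{cond:duality} both for $K$ and for $\overline{\R}$, with the common choice $g_K=g_{\overline{\R}}(z)=\exp(-z^2)$, were checked in \prettyref{cor:duality_example}. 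In case~(ii), the density of $\mathcal{OV}^{-1}_{\operatorname{ind}}(\overline{\R})$ in $\mathcal{OV}^{-1}_{\operatorname{ind}}(K)$ required by \prettyref{thm:Omega_strips_with_holes}(ii) is the one supplied by \cite[Theorem~2.2.1, p.~474]{Kawai} and already invoked in \prettyref{cor:duality_example}(iii). What remains is \prettyref{cond:dual_Omega}, and only when $K\cap\{\pm\infty\}\neq\varnothing$.

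To verify \prettyref{cond:dual_Omega}, I would choose the auxiliary sequence $(c_n)\subset(0,1]$ so that $\theta=\ln(c_p/c_q)/\ln(c_p/c_k)$ is synchronised with the convex-combination structure of the $a_n$'s. Concretely, set
\[
 c_n:=\exp(-a_n)\;\;\text{if }a_n\geq 0\;(\text{so }a_n\to\infty),\qquad
 c_n:=\exp(1/a_n)\;\;\text{if }a_n<0\;(\text{so }a_n\to 0).
\]
Both sequences are strictly decreasing in $(0,1]$ with limit $0$, and a direct computation gives
\[
 \theta=\frac{a_q-a_p}{a_k-a_p}\quad\text{resp.}\quad \theta=\frac{(a_q-a_p)\,a_k}{(a_k-a_p)\,a_q},
\]
so that $\theta a_k+(1-\theta)a_p\leq a_q$ in both cases (the second inequality uses $a_k/a_q=|a_k|/|a_q|<1$).

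For $\zeta\in\R$ with $|\zeta|\geq 1+c_k^{-1}$ and $z\in\C$ with $|z-\zeta|\leq c_j$, the real part $\re(z)$ has the same sign as $\zeta$ and $|\re(z)|\in[|\zeta|-c_j,|\zeta|+c_j]$, so the sup and inf of $\nu_j(z)=\exp(a_j|\re(z)|^\gamma)$ over this disk are attained at the endpoints and equal $\exp(a_j(|\zeta|\pm c_j)^\gamma)$. Substituting into the condition and expanding $(|\zeta|\pm c_j)^\gamma=|\zeta|^\gamma\pm\gamma c_j|\zeta|^{\gamma-1}+O(|\zeta|^{\gamma-2})$, the inequality to be proved takes the form
\[
 (\theta a_k+(1-\theta)a_p-a_q)\,|\zeta|^\gamma + R(\zeta)\leq \ln C,
\]
with $R(\zeta)=O(|\zeta|^{\gamma-1})$ uniformly bounded on $[1+c_k^{-1},\infty)$ for $\gamma\in(0,1]$. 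Since the leading coefficient is $\leq 0$ by the above, a sufficiently large $C=C(p,q,k)$ makes the bound hold. Then \prettyref{thm:Omega_strips_with_holes}(i) closes case~(i) (with \prettyref{cond:duality}(iii)--(v) contributed by \prettyref{cor:duality_example}) and \prettyref{thm:Omega_strips_with_holes}(ii) closes case~(ii).

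The main obstacle is precisely this choice of $(c_n)$. With a naive choice such as $c_n=1/n$, $\theta$ depends on $\ln$-ratios of indices and is essentially uncorrelated with $(a_n)$, so the required bound $\theta\leq(a_q-a_p)/(a_k-a_p)$ generally fails. The exponential parametrisation $c_n=e^{-a_n}$ (resp.\ $e^{1/a_n}$) is what makes $\theta$ reproduce the convex-combination coefficient and lets Hadamard's three-circles principle hidden in \prettyref{lem:dual_Omega}\,a) deliver the required estimate.
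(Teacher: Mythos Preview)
Your proposal is correct and follows essentially the same route as the paper: reduce to \prettyref{thm:Omega_strips_with_holes}, invoke the proofs of \prettyref{cor:omega_for_strips} and \prettyref{cor:duality_example} for all hypotheses except \prettyref{cond:dual_Omega}, and verify the latter with the choice $c_{n}=\exp(-a_{n})$ resp.\ $c_{n}=\exp(1/a_{n})$ so that the key inequality $\theta a_{k}+(1-\theta)a_{p}\leq a_{q}$ holds. The only cosmetic difference is that the paper bounds the error terms $(|\zeta|\pm c_{j})^{\gamma}-|\zeta|^{\gamma}$ by the subadditivity $|x+y|^{\gamma}\leq |x|^{\gamma}+|y|^{\gamma}$ rather than by your first-order Taylor estimate; both give a constant bound on $R(\zeta)$ once $|\zeta|\geq 1+c_{k}^{-1}$ and $\gamma\leq 1$.
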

\begin{proof}
By \cite[Corollary 14, p.\ 18]{kruse2019_1} $\mathcal{OV}(\overline{\C})$ satisfies $(\Omega)$. Due to \prettyref{thm:Omega_strips_with_holes} and (the proof of) \prettyref{cor:duality_example} 
we only need to check that $(H3CT)$ is fulfilled if $K\cap\{\pm\infty\}\neq\varnothing$. 
Let $c_{n}:=\exp(1/a_{n})$ for all $n\in\N$ if $a_{n}<0$ for all $n\in\N$ 
and $c_{n}:=\exp(-a_{n})$ for all $n\in\N$ if $a_{n}\geq 0$ for all $n\in\N$. Then $(c_{n})$ is a strictly decreasing sequence, 
$c_{n}\leq 1$ for all $n\in\N$ and $\lim_{n\to\infty}c_{n}=0$. Let $p,q,k\in\N$ such that $p<q<k$ and $\theta:=\frac{\ln(c_{p}/c_{q})}{\ln(c_{p}/c_{k})}$.
Let $\zeta\in\R$ with $|\zeta|\geq 1+c_{k}^{-1}$.
For $z\in\C$ with $|z-\zeta|\leq c_{n}$, $n\in\{p,q,k\}$, we deduce from the inequalities
\[
 ||\zeta|-|\re(z)-\zeta||^{\gamma}\leq |\re(z)|^{\gamma}\leq (|\re(z)-\zeta|+|\zeta|)^{\gamma}\leq (c_{n}+|\zeta|)^{\gamma}
\]
and 
\[
 |\zeta|-|\re(z)-\zeta|\geq |\zeta|-c_{n}\geq 1+c_{k}^{-1}-c_{n}\geq c_{k}^{-1}>0
\]
that 
\[
 \inf_{z\in\C,\,|z-\zeta|\leq c_{q}}e^{a_{q}|\re(z)|^{\gamma}}\geq e^{a_{q}(c_{q}+|\zeta|)^{\gamma}}
\]
and
\[
 \sup_{z\in\C,\,|z-\zeta|\leq c_{k}}e^{\theta a_{k}|\re(z)|^{\gamma}}\sup_{z\in\C,\,|z-\zeta|\leq c_{p}}e^{(1-\theta) a_{p}|\re(z)|^{\gamma}}
 \leq e^{\theta a_{k}(|\zeta|-c_{k})^{\gamma}+(1-\theta) a_{p}(|\zeta|-c_{p})^{\gamma}},
\]
if $a_{n}<0$, as well as 
\[
 \inf_{z\in\C,\,|z-\zeta|\leq c_{q}}e^{a_{q}|\re(z)|^{\gamma}}\geq e^{a_{q}(|\zeta|-c_{q})^{\gamma}}
\]
and
\[
 \sup_{z\in\C,\,|z-\zeta|\leq c_{k}}e^{\theta a_{k}|\re(z)|^{\gamma}}\sup_{z\in\C,\,|z-\zeta|\leq c_{p}}e^{(1-\theta) a_{p}|\re(z)|^{\gamma}}
 \leq e^{\theta a_{k}(c_{k}+|\zeta|)^{\gamma}+(1-\theta) a_{p}(c_{p}+|\zeta|)^{\gamma}},
\]
if $a_{n}\geq 0$. Now, we only need to prove that there is $C>0$ such that 
\[
 e^{\theta a_{k}(|\zeta|-c_{k})^{\gamma}+(1-\theta) a_{p}(|\zeta|-c_{p})^{\gamma}}\leq C e^{a_{q}(c_{q}+|\zeta|)^{\gamma}},\quad a_{n}<0,
\]
resp.\ 
\[
 e^{\theta a_{k}(c_{k}+|\zeta|)^{\gamma}+(1-\theta) a_{p}(c_{p}+|\zeta|)^{\gamma}}\leq C e^{a_{q}(|\zeta|-c_{q})^{\gamma}},\quad a_{n}\geq 0.
\]
If $a_{n}<0$, we observe that 
\begin{flalign*}
 &\hspace{0.37cm}\theta a_{k}(|\zeta|-c_{k})^{\gamma}+(1-\theta) a_{p}(|\zeta|-c_{p})^{\gamma}-a_{q}(c_{q}+|\zeta|)^{\gamma}\\
 &\leq \theta a_{k}(|\zeta|-c_{p})^{\gamma}+(1-\theta) a_{p}(|\zeta|-c_{p})^{\gamma}-a_{q}(|\zeta|-c_{p})^{\gamma}-a_{q}(c_{p}+c_{q})^{\gamma}\\
 &= (\theta a_{k}+(1-\theta) a_{p}-a_{q})(|\zeta|-c_{p})^{\gamma}-a_{q}(c_{p}+c_{q})^{\gamma}
\end{flalign*}
and, if $a_{n}\geq 0$, that 
\begin{flalign*}
 &\hspace{0.37cm}\theta a_{k}(c_{k}+|\zeta|)^{\gamma}+(1-\theta) a_{p}(c_{p}+|\zeta|)^{\gamma}-a_{q}(|\zeta|-c_{q})^{\gamma}\\
 &\leq \theta a_{k}(c_{p}+|\zeta|)^{\gamma}+(1-\theta) a_{p}(c_{p}+|\zeta|)^{\gamma}-a_{q}|||\zeta|+c_{p}|^{\gamma}-|c_{p}+c_{q}|^{\gamma}|\\
 &\leq (\theta a_{k}+(1-\theta) a_{p}-a_{q})(c_{p}+|\zeta|)^{\gamma}+a_{q}(c_{p}+c_{q})^{\gamma}.
\end{flalign*} 
What remains to be shown is that 
\begin{equation}\label{eq:cor_Omega_strips_with_holes}
 0\geq \theta a_{k}+(1-\theta) a_{p}-a_{q}
\end{equation}
because then we are done with $C:=\exp(|a_{q}|(c_{p}+c_{q})^{\gamma})$. If $a_{n}<0$, then 
\[ 
\theta=\frac{\ln(c_{p}/c_{q})}{\ln(c_{p}/c_{k})}=\frac{(1/a_{p})-(1/a_{q})}{(1/a_{p})-(1/a_{k})}
=\frac{a_{k}(a_{q}-a_{p})}{a_{q}(a_{k}-a_{p})}
\]
and \eqref{eq:cor_Omega_strips_with_holes} is equivalent to 
\[
 0\geq \frac{a_{k}^{2}(a_{q}-a_{p})}{a_{q}(a_{k}-a_{p})}+\bigl(1-\frac{a_{k}(a_{q}-a_{p})}{a_{q}(a_{k}-a_{p})}\bigr)a_{p}-a_{q},
\]
which holds if and only if 
\begin{align*}
 0&\leq a_{k}^{2}(a_{q}-a_{p})+\bigl(a_{q}(a_{k}-a_{p})-a_{k}(a_{q}-a_{p})\bigr)a_{p}-a_{q}^{2}(a_{k}-a_{p})\\
  &=a_{k}^{2}(a_{q}-a_{p})+(a_{k}-a_{q})a_{p}^{2}-a_{q}^{2}(a_{k}-a_{q}+a_{q}-a_{p})\\
  &=(a_{k}^{2}-a_{q}^{2})(a_{q}-a_{p})+(a_{k}-a_{q})(a_{p}^{2}-a_{q}^{2})\\
  &=(a_{k}-a_{q})(a_{k}+a_{q})(a_{q}-a_{p})-(a_{k}-a_{q})(a_{q}-a_{p})(a_{p}+a_{q})
\end{align*}
as $a_{q}(a_{k}-a_{p})<0$. Since $a_{k}-a_{q}>0$ and $a_{q}-a_{p}>0$, this is equivalent to 
\[
 0\leq (a_{k}+a_{q})-(a_{p}+a_{q})=a_{k}-a_{p},
\]
which is true. If $a_{n}\geq 0$, then 
\[
 \theta=\frac{\ln(c_{p}/c_{q})}{\ln(c_{p}/c_{k})}=\frac{a_{q}-a_{p}}{a_{k}-a_{p}}
\]
and \eqref{eq:cor_Omega_strips_with_holes} is equivalent to 
\[
 0\geq \frac{a_{q}-a_{p}}{a_{k}-a_{p}}a_{k}+\bigl(1-\frac{a_{q}-a_{p}}{a_{k}-a_{p}}\bigr)a_{p}-a_{q},
\]
which holds, as $a_{k}-a_{p}>0$, if and only if 
\begin{align*}
 0&\geq (a_{q}-a_{p})a_{k}+\bigl(a_{k}-a_{p}-(a_{q}-a_{p})\bigr)a_{p}-(a_{k}-a_{p})a_{q}\\
  &= a_{q}a_{k}-a_{p}a_{k}+a_{k}a_{p}-a_{q}a_{p}-a_{k}a_{q}+a_{p}a_{q}
   =0.
\end{align*}
\end{proof}
\section{Surjectivity of the Cauchy-Riemann operator}
In our last section we prove our main result on the surjectivity of the Cauchy-Riemann operator 
on $\mathcal{EV}(\overline{\C}\setminus K,E)$ for non-empty compact $K\subset\overline{\R}$. 
This is done by using the results obtained so far and splitting theory. 
We recall that a Fr\'echet space $(F,(\vertiii{\cdot}_{k})_{k\in\N})$ satisfies $(DN)$
by \cite[Chap.\ 29, Definition, p.\ 359]{meisevogt1997} if
\[
\exists\;p\in\N\;\forall\;k\in\N\;\exists\;n\in\N,\,C>0\;\forall\;x\in F:\;
\vertiii{x}^{2}_{k}\leq C\vertiii{x}_{p}\vertiii{x}_{n}.
\]
A \emph{PLS-space} is a projective limit $X=\lim\limits_{\substack{\longleftarrow\\N\in\N}}X_{N}$, where the
inductive limits $X_{N}=\lim\limits_{\substack{\longrightarrow\\n\in \N}}(X_{N,n},\vertiii{\cdot}_{N,n})$ are DFS-spaces, and it satisfies $(PA)$ if
\begin{gather*}
\forall \;N\;\exists\;  M\;  \forall\;  K\;  \exists\;  n\;  \forall\;  m\;  \forall\;  \eta >0\;  \exists\;  k,C,r_0 >0\;  \forall\;  r>r_0\; \forall\; x'\in X'_{N}:\\
	\vertiii{x'\circ i^{M}_{N}}^{\ast}_{M,m}\leq C\bigl(r^{\eta}\vertiii{x'\circ i^{K}_{N}}^{\ast}_{K,k}+\frac{1}{r}\vertiii{x'}^{\ast}_{N,n}\bigr)
\end{gather*}
where $\vertiii{\cdot}^{\ast}$ denotes the dual norm of $\vertiii{\cdot}$ and $i^{M}_{N}$, $i^{K}_{N}$ the linking maps 
(see \cite[Section 4, Eq.\ (24), p.\ 577]{Dom1}).

\begin{thm}[{\cite[Theorem 5, p.\ 7-8]{kruse2019_1}}]\label{thm:surj_CR_DN_PA}
Let $\mathcal{EV}(\Omega)$ be a Schwartz space and $\mathcal{EV}_{\overline{\partial}}(\Omega)$ 
a nuclear subspace satisfying property $(\Omega)$. 
Assume that the scalar-valued operator $\overline{\partial}\colon\mathcal{EV}(\Omega)\to\mathcal{EV}(\Omega)$ 
is surjective. Moreover, if
\begin{enumerate}
\item [a)] $E:=F_{b}'$ where $F$ is a Fr\'echet space over $\C$ satisfying $(DN)$, or 
\item [b)] $E$ is an ultrabornological PLS-space over $\C$ satisfying $(PA)$, 
\end{enumerate}
then
\[
\overline{\partial}^{E}\colon \mathcal{EV}(\Omega,E)\to\mathcal{EV}(\Omega,E)
\]
is surjective.
\end{thm}

The case $a)$ is included in case $b)$ if $F$ is a Fr\'echet-Schwartz space by \cite[Remark 2, p.\ 6]{kruse2019_1}. 
If $E$ is a Fr\'echet space over $\C$ the preceding theorem is also valid but the assumption 
that $\mathcal{EV}_{\overline{\partial}}(\Omega)$ satifies property $(\Omega)$ is not needed 
(see \cite[4.9 Corollary, p.\ 21]{kruse2018_5}). 

\begin{cor}\label{cor:surj_CR_DN_PA_exa}
Let $K\subset\overline{\R}$ be a non-empty compact set, 
$(a_{n})_{n\in\N}$ strictly increasing, $a_{n}<0$ for all $n\in\N$, $\lim_{n\to\infty}a_{n}=0$
and $\mathcal{V}:=(\exp(a_{n}\mu))_{n\in\N}$ where 
\[
 \mu\colon\C \to [0,\infty),\;\mu(z):=|\re(z)|^{\gamma},
\]
for some $0<\gamma\leq 1$. If 
\begin{enumerate}
 \item [(i)] $K\subset\R$, or $K\cap\{\pm\infty\}$ has no isolated points in $K$, or
 \item [(ii)] $K$ is arbitrary and $\gamma=1$,
\end{enumerate}
and 
\begin{enumerate}
\item [a)] $E:=F_{b}'$ where $F$ is a Fr\'echet space over $\C$ satisfying $(DN)$, or 
\item [b)] $E$ is an ultrabornological PLS-space over $\C$ satisfying $(PA)$, 
\end{enumerate}
then
\[
\overline{\partial}^{E}\colon \mathcal{EV}(\overline{\C}\setminus K,E)\to\mathcal{EV}(\overline{\C}\setminus K,E)
\]
is surjective.
\end{cor}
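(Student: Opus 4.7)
The plan is to apply \prettyref{cor:surj_CR_DN_PA} directly to the weights $\mathcal{V}:=(\exp(a_n\mu))_{n\in\N}$ with $\mu(z):=|\re(z)|^\gamma$, so the proof reduces to checking every hypothesis of that corollary for our $K$, $\mathcal{V}$, and the sets $\Omega_n:=S_n(M)$ for both $M=\varnothing$ and $M=K$, and then selecting the appropriate subcase. Essentially all the weight-theoretic hypotheses have already been verified for precisely this $\mathcal{V}$ in the preceding material.

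That $\mu$ is a (subharmonic) strong weight generator will follow from $\mu(z)=\mu(|\re z|)$, strict monotonicity of $\mu$ on $[0,\infty)$, the logarithmic smallness $\ln(1+x)/x^\gamma\to 0$, and the subadditivity $(x+n)^\gamma\leq x^\gamma+n^\gamma$ on $[0,\infty)$ for $0<\gamma\leq 1$, which supplies the strong growth bound with any $\Gamma>1$ and $C:=n^\gamma$. Condition \prettyref{cond:weights} with $\psi_n(z):=(1+|z|^2)^{-2}$ is exactly Example 2.6 a) applied with $I_0=\{1\}$, and \prettyref{cond:dense}, using $g_n(z):=\exp(-z^2)$ and $I_{214}(n)\geq I_{14}(n+1)$, holds for both $\Omega_n=S_n(\varnothing)$ and $\Omega_n=S_n(K)$ via the same Gaussian-type estimates used in the proof of \prettyref{cor:omega_for_strips}. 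Conditions \prettyref{cond:DFS} and \prettyref{cond:duality} for this $\mathcal{V}$ were checked line-by-line in the proof of \prettyref{cor:duality_example}, and \prettyref{cond:dual_Omega} (only needed once $K\cap\{\pm\infty\}\neq\varnothing$) was verified in the proof of \prettyref{cor:Omega_strips_with_holes}.

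With those pieces in hand, the last step is simply a case split matching the hypotheses of \prettyref{cor:surj_CR_DN_PA}. In subcase (i) either $K\subset\R$ or $K\cap\{\pm\infty\}$ has no isolated points in $K$, which puts us into subcase (i) or (ii) of \prettyref{cor:surj_CR_DN_PA}; in subcase (ii) the assumption $\gamma=1$ allows us to invoke the density of $\mathcal{OV}^{-1}_{\operatorname{ind}}(\overline{\R})$ in $\mathcal{OV}^{-1}_{\operatorname{ind}}(K)$ (the Kawai density result cited in the proof of \prettyref{cor:duality_example}(iii)), so subcase (iii) of \prettyref{cor:surj_CR_DN_PA} applies. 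The only mildly non-routine point I expect is confirming that the verifications of \prettyref{cond:dense} transfer uniformly between the two choices $\Omega_n=S_n(\varnothing)$ and $\Omega_n=S_n(K)$ required by \prettyref{cor:surj_CR_DN_PA}; this is straightforward because the Gaussian kernel bounds and the integral estimates rely only on the symmetry of $\nu_n$ in $\re z$ and on elementary one-dimensional calculations, both of which are insensitive to the removal of a compact real set from the strips.
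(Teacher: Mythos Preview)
Your proposal is correct and follows essentially the same route as the paper: reduce to \prettyref{cor:surj_CR_DN_PA} by assembling the hypotheses from the proofs of \prettyref{cor:duality_example} (for \prettyref{cond:DFS} and \prettyref{cond:duality}) and \prettyref{cor:Omega_strips_with_holes} (for \prettyref{cond:dual_Omega}), and verify \prettyref{cond:weights} and \prettyref{cond:dense} for both $\Omega_n=S_n(\varnothing)$ and $\Omega_n=S_n(K)$; the paper does the latter by citing \cite[4.10 Example a)]{kruse2018_5} rather than the in-paper example you invoke, and notes, as you do, that the verification goes through verbatim when $K\cap\{\pm\infty\}$ has isolated points.
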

\begin{proof}
We only need to check that the conditions of \prettyref{thm:surj_CR_DN_PA} are fulfilled. 
$\mathcal{EV}(\overline{\C}\setminus K)$ is nuclear, in particular a Schwartz space, and thus its subspace 
$\mathcal{EV}_{\overline{\partial}}(\overline{\C}\setminus K)$ as well by \cite[Theorem 3.1, p.\ 188]{kruse2018_4}, 
\cite[2.8 Example (ii), p.\ 179]{kruse2018_4}, \cite[Remark 2.7, p.\ 178-179]{kruse2018_4} 
and \cite[Remark 2.3 (b), p.\ 177]{kruse2018_4}. 
Furthermore, $\mathcal{EV}_{\overline{\partial}}(\overline{\C}\setminus K)=\mathcal{OV}(\overline{\C}\setminus K)$ 
by \cite[Proposition 7 (b), p.\ 11]{kruse2019_1} and \cite[Example 6, p.\ 11]{kruse2019_1}. 
Due to \prettyref{cor:Omega_strips_with_holes} the space $\mathcal{OV}(\overline{\C}\setminus K)$ 
satisfies $(\Omega)$. The Cauchy-Riemann operator $\overline{\partial}\colon\mathcal{EV}(\overline{\C}\setminus K)\to\mathcal{EV}(\overline{\C}\setminus K)$ in the $\C$-valued case is surjective by \cite[Corollary 5.6, p.\ 27]{kruse2018_5} 
which follows from \cite[Example 5.7 (a), p.\ 27-28]{kruse2018_5} 
in the case that $K\subset\R$ or $K\cap\{\pm\infty\}$ has no isolated points in $K$.
If $K\cap\{\pm\infty\}$ has isolated points in $K$, then the proof that the conditions of 
\cite[Corollary 5.6, p.\ 27]{kruse2018_5}
are fulfilled is verbatim as in \cite[Example 5.7 (a), p.\ 27-28]{kruse2018_5}. Hence 
all conditions of \prettyref{thm:surj_CR_DN_PA} are fulfilled. 
\end{proof}

\prettyref{cor:surj_CR_DN_PA_exa}, together with \cite[Corollary 18, p.\ 21]{kruse2019_1} ($K=\varnothing$), 
generalises \cite[5.24 Theorem, p.\ 95]{ich} which is case $(ii)$. 

\subsection*{Acknowledgements}
The present paper is a generalisation of parts of Chapter 4 and 5 of the author's PhD thesis \cite{ich}, 
written under the supervision of M.\ Langenbruch. The author is deeply grateful to him for his support and advice. 
Further, it is worth to mention that some of the results appearing in the PhD thesis 
and thus their generalised counterparts in this work are essentially due to him.
\bibliography{biblio}
\bibliographystyle{plainnat}
\end{document}